\newtheorem{theo}{Theorem}
\newtheorem{lemm}[theo]{Lemma}
\newtheorem{prop}[theo]{Proposition}
\newdefinition{defi}{Definition}
\newdefinition{rema}{Remark}
\def\ps@pprintTitle{
 \let\@oddhead\@empty
 \let\@evenhead\@empty
 \def\@oddfoot{}%
 \let\@evenfoot\let\@oddfoot }
\begin{document}

\title{\large {\textbf{On the blow-up results for a class of strongly  perturbed semilinear heat equations}}}
\author{V. T. Nguyen}
\ead{vtnguyen@math.univ-paris13.fr}
\address{Universit\'e Paris 13, Sorbonne Paris Cit\'e,\\ LAGA, CNRS (UMR 7539), F-93430, Villetaneuse, France.}

\begin{abstract}
We consider in this work some class of strongly perturbed for the semilinear heat equation with Sobolev sub-critical power nonlinearity. We first derive a Lyapunov functional in similarity variables and then use it to derive the blow-up rate. We also classify all possible asymptotic behaviors of the solution when it approaches to singularity. Finally, we describe precisely the blow-up profiles corresponding to these behaviors.
\end{abstract}

\begin{keyword}
Finite-time blow-up \sep asymptotic behavior of solutions \sep nonlinear parabolic equations.
\end{keyword}
\maketitle

\section{Introduction}
We are interested in the following nonlinear parabolic equation: 
\begin{equation}\label{equ:problem}
\left\{
\begin{array}{rcl}
u_t &=& \Delta u + |u|^{p-1}u + h(u), \\
u(0) &=& u_0 \in L^\infty(\mathbb{R}^n),
\end{array}
\right.
\end{equation}
where $u$ is defined for $(x,t) \in \mathbb{R}^n \times [0,T)$, $p$ is a sub-critical nonlinearity,
\begin{equation}\label{equ:rangep}
1 < p, \quad (n - 2)p < n+2.
\end{equation}
The function $h$ is in $\mathcal{C}^1(\mathbb{R}, \mathbb{R})$ satisfying
\begin{equation}\label{equ:h}
j = 0,1,\;|h^{(j)}(z)| \leq M\left( \dfrac{|z|^{p-j}}{\log^{a}(2 + z^2)} + 1\right), \quad |h''(z)| \leq M\dfrac{|z|^{p-2}}{\log^{a}(2 + z^2)},
\end{equation}
where $a > 1$, $M > 0$. Typically, $h(z) = \frac{\mu |z|^{p-1}z}{\log^a(2 + z^2)}$ with $\mu \in \mathbb{R}$.\\

\noindent By standard results, the problem \eqref{equ:problem} has a unique classical solution $u(x,t)$ in $L^\infty(\mathbb{R}^n)$, which exists at least for small times. The solution $u(x,t)$ may develop singularities in some finite time. We say that a function $u: \mathbb{R}^n \times [0,T) \mapsto \mathbb{R}$ is a solution of \eqref{equ:problem} if $u$ solves \eqref{equ:problem} and satisfies 
\begin{equation}\label{equ:condForU}
u, u_t, \nabla u,\nabla^2u \;\text{are bounded and continuous on}\; \mathbb{R}^n \times [0,\tau],\; \forall \tau < T.
\end{equation}
It is said that $u(x,t)$ blows up in a finite time $T < +\infty$ if $u(x,t)$ satisfies \eqref{equ:problem}, \eqref{equ:condForU} and 
$$\lim_{t \to T} \|u(t)\|_{L^\infty(\mathbb{R}^n)} = +\infty.$$
Here we call $T$ the blow-up time of $u(x,t)$. In such a blow-up case, a point $x_0 \in \mathbb{R}^n$ is called a blow-up point of $u(x,t)$ if and only if there exist $(x_n, t_n) \to (x_0, T)$ such that $|u(x_n,t_n)| \to +\infty$ as $n \to +\infty$.\\

Consider $v$ a positive blow-up solution of the associated ODE of \eqref{equ:problem}. It is clear that $v$ is given by 
\begin{equation}\label{equ:vode11}
v' = v^p + h(v), \quad v(T) = +\infty, \quad \text{for some $T > 0$.}
\end{equation}
Since the blow-up solution of \eqref{equ:vode11} satisfies (see Lemma \ref{ap:lemmA1})
\begin{equation}\label{behaV1}
v(t) \sim \kappa(T - t)^{-\frac{1}{p-1}} \quad \text{as $t \to T$, where $\kappa = (p-1)^{-\frac{1}{p-1}}$},
\end{equation}
it is natural to ask whether the blow-up solution $u(t)$ of \eqref{equ:problem} has the same blow-up rate as $v(t)$ does. More precisely, are there constants $c, C > 0$ such that
\begin{equation}\label{equ:blrateIn}
c(T -t)^{-\frac{1}{p-1}} \leq \|u(t)\|_{L^\infty(\mathbb{R}^n)} \leq C(T -t)^{-\frac{1}{p-1}}, \quad \forall t \in (0, T) ?
\end{equation}
\noindent By a simple argument based on Duhamel's formula, we can show that the lower bound in \eqref{equ:blrateIn} is always satisfied (see \cite{We81ijm}). For the upper blow-up rate estimate, it is much less simple and requires more work. Practically, we define for all $x_0 \in \mathbb{R}^n$ ($x_0$ may be a blow-up point of $u$ or not) the following \emph{similarity variables} introduced in Giga and Kohn \cite{GKcpam85, GKiumj87, GKcpam89}:
\begin{equation}\label{equ:simivariables}
y = \frac{x-x_0}{\sqrt{T-t}}, \quad s = -\log(T-t), \quad w_{x_0,T} = (T - t)^\frac{1}{p-1}u(x,t).
\end{equation}
Hence $w_{x_0,T}$ satisfies for all $s \geq -\log{T}$ and for all $y \in \mathbb{R}^n$:
\begin{equation}\label{equ:divw1}
\partial_sw_{x_0,T} = \frac{1}{\rho}\text{div}(\rho \nabla w_{x_0,T}) - \frac{w_{x_0,T}}{p-1}  + |w_{x_0,T}|^{p-1}w_{x_0,T} + e^{-\frac{ps}{p-1}}h\left(e^{\frac{s}{p-1}}w_{x_0,T}\right),
\end{equation}
where 
\begin{equation}\label{equ:rho}
\rho(y) = \left(\frac{1}{4\pi}\right)^{n/2}e^{-\frac{|y|^2}{4}}.
\end{equation}
Here, we say that $w: \mathbb{R}^n \times [-\log T, +\infty) \mapsto \mathbb{R}$ is a solution of \eqref{equ:divw1} if $w$ solves \eqref{equ:divw1} and satisfies
\begin{equation}\label{equ:proW}
w, w_s, \nabla w, \nabla ^2 w \; \text{are bounded and continuous on} \; \mathbb{R}^n \times [-\log T, S],\; \forall S < +\infty.
\end{equation}
We can see that the study of $u$ in the neighborhood of $(x_0,T)$ is equivalent to the study of the long-time behavior of $w_{x_0,T}$ and each result for $u$ has an equivalent formulation in term of $w_{x_0,T}$. In particular, the proof of the upper bound in \eqref{equ:blrateIn} is now equivalent to showing that there exists a time $\hat{s} \geq -\log T$ large enough such that
\begin{equation}\label{equ:estWx0}
\|w_{x_0,T}(s)\|_{L^\infty(\mathbb{R}^n)} \leq C, \quad \forall s \geq \hat{s}.
\end{equation}

\noindent We remark that the perturbation term added to equation \eqref{equ:divw1} satisfies the following inequality, 
\begin{equation}\label{equ:estimateH}
j = 0, 1, \quad e^{-\frac{(p-j)s}{p-1}}\left|h^{(j)}\left(e^\frac{s}{p-1}z \right) \right| \leq \frac{C_0}{s^a}\left(|z|^{p-j} + 1\right),\quad \forall s \geq s_0,
\end{equation}
for some $C_0 > 0$ and $s_0 > 0$ (see Lemma \ref{ap:lemmA2} for a proof of this fact).\\

When $h \equiv 0$, Giga and Kohn proved \eqref{equ:estWx0} in \cite{GKiumj87} for $1 < p < \frac{3n + 8}{3n-4}$ or for non-negative initial data (so that the solution is positive everywhere) with sub-critical $p$. Estimate \eqref{equ:estWx0} is extended for all $p$ satisfying \eqref{equ:rangep} without assuming non-negativity for initial data $u_0$ by Giga, Matsui and Sasayama in \cite{GMSiumj04}. The proof written in \cite{GMSiumj04} is strongly based on the existence of the following Lyapunov functional:
\begin{equation}\label{equ:E0}
\mathcal{E}_0[w](s) = \int_{\mathbb{R}^n} \left(\frac{1}{2}|\nabla w|^2  + \frac{1}{2(p-1)}|w|^2 - \frac{1}{p+1}|w|^{p+1}\right)\rho dy.
\end{equation}
Based on this functional, some energy estimates related to this structure and a bootstrap argument given in \cite{Qu99amuc}, the authors in \cite{GMSiumj04} have established the following key integral estimate
\begin{equation}\label{equ:keyIt}
\sup_{s \geq s'}\int_s^{s+1} \|w_{x_0,T}(s)\|_{L^{p+1}(\mathbf{B}_R)}^{(p+1)q}ds \leq C_{q,s'}, \quad \forall q \geq 2,\quad s' > -\log T.
\end{equation}
Since this estimate holds for all $q \geq 2$, we obtain an upper bound for $w_{x_0,T}$ which yields \eqref{equ:estWx0}.\\

When $h \not\equiv 0$, we wonder whether a perturbation of the method of \cite{GMSiumj04} would work for our problem. A key step is to find a Lyapunov functional for equation \eqref{equ:divw1}. Following the method introduced by Hamza and Zaag in \cite{HZnonl12, HZjhde12} for perturbations of the semilinear wave equation, we introduce 
\begin{equation}\label{equ:lyafun}
\mathcal{J}[w](s) = \mathcal{E}[w](s)e^{\frac{\gamma}{a-1}s^{1-a}} + \theta s^{1-a},
\end{equation}
where $\gamma = 8C_0\left(\frac{p+1}{p-1}\right)^2$ and $\theta > 0$ is sufficiently large constant which will be determined later,
\begin{equation}\label{equ:difE}
\mathcal{E}[w] = \mathcal{E}_0[w] + \mathcal{I}[w], \quad \mathcal{I}[w](s) = - e^{-\frac{p + 1}{p-1}s}\int_{\mathbb{R}^n} H\left(e^{\frac{s}{p-1}}w\right)\rho dy,
\end{equation}
with $H(z) = \int_0^z h(\xi)d\xi$.\\
With this introduction, we derive that the functional $\mathcal{J}[w]$ is a decreasing function of time for equation \eqref{equ:divw1}, provided that $s$ is large enough. More precisely, we have the following:
\begin{theo}[\textbf{Existence of a Lyapunov functional for equation \eqref{equ:divw1}}] \label{theo:lya} Let $a, p, n, M$ be fixed, consider $w$ a solution of equation \eqref{equ:divw1} satisfying \eqref{equ:proW}. Then there exist $\hat{s}_0 = \hat{s}_0(a, p, n, M) \geq s_0$ and $\hat{\theta}_0 = \hat{\theta}_0(a,p,n,M)$ such that if $\theta \geq \hat{\theta}_0$, then $\mathcal{J}$ satisfies the following inequality, for all $s_2 > s_1 \geq \max\{\hat{s}_0, -\log T\}$, 
\begin{equation}\label{equ:estimateJinT}
\mathcal{J}[w](s_2) - \mathcal{J}[w](s_1) \leq - \frac{1}{2}\int_{s_1}^{s_2}\int_{\mathbb{R}^n}(\partial_sw)^2\rho dy ds.
\end{equation}
\end{theo}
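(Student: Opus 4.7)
The plan is to compute $\tfrac{d}{ds}\mathcal{J}[w]$ directly, isolate the dissipation $-\int(\partial_sw)^2\rho\,dy$, and absorb the perturbative residue using the structural pieces in \eqref{equ:lyafun}. Writing $F(s,w):=e^{-ps/(p-1)}h(e^{s/(p-1)}w)$, multiplying \eqref{equ:divw1} by $\partial_sw$ and integrating against $\rho\,dy$ after a weighted integration by parts (using $\nabla\rho=-\tfrac{y}{2}\rho$ and the regularity \eqref{equ:proW}) gives
\[
\frac{d}{ds}\mathcal{E}_0[w] = -\int_{\mathbb{R}^n}(\partial_sw)^2\rho\,dy + \int_{\mathbb{R}^n} F(s,w)\,\partial_sw\,\rho\,dy.
\]
Differentiating $\mathcal{I}[w]$ via the chain rule applied to $H(e^{s/(p-1)}w)$, the piece produced by the $\partial_sw$ factor inside $\tfrac{d}{ds}(e^{s/(p-1)}w)$ is exactly $-\int F(s,w)\partial_sw\,\rho\,dy$; this cancels the perturbation above and is precisely the reason for introducing $\mathcal{I}$. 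What survives is
\[
R[w](s) := \tfrac{p+1}{p-1}e^{-(p+1)s/(p-1)}\!\int H(e^{s/(p-1)}w)\rho\,dy - \tfrac{1}{p-1}\!\int F(s,w)\,w\,\rho\,dy.
\]

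To bound $R[w](s)$ I would apply the $j=0$ case of \eqref{equ:estimateH} to the second integral and derive a companion estimate $e^{-(p+1)s/(p-1)}|H(e^{s/(p-1)}z)|\le Cs^{-a}(|z|^{p+1}+1)$ by integrating the $j=0$ bound in \eqref{equ:h} in $\xi\in[0,e^{s/(p-1)}z]$, yielding $|R[w](s)|\le C_1 s^{-a}\int(|w|^{p+1}+1)\rho\,dy$ for $s\ge s_0$. The main obstacle is that $\int|w|^{p+1}\rho$ is not directly controlled by $\mathcal{E}[w]$. To reduce it to ingredients already present in $\mathcal{J}$, I would test \eqref{equ:divw1} against $w\rho$ and combine with the algebraic identity $\int(|\nabla w|^2+\tfrac{w^2}{p-1})\rho = 2\mathcal{E}_0[w]+\tfrac{2}{p+1}\int|w|^{p+1}\rho$ to get
\[
\tfrac{p-1}{p+1}\!\int|w|^{p+1}\rho\,dy = \int w\,\partial_sw\,\rho\,dy + 2\mathcal{E}_0[w] - \int F(s,w)\,w\,\rho\,dy.
\]
Young's inequality applied to $s^{-a}\int w\partial_sw\rho$ (balancing a small fraction of the dissipation against an $s^{-2a}\int w^2\rho$ piece, the latter reabsorbed via H\"older in the Gaussian measure into $\int|w|^{p+1}\rho$ itself), together with a second use of \eqref{equ:estimateH} on $\int F(s,w)w\rho$, produces for $s$ large a schematic bound
\[
\frac{1}{s^a}\!\int|w|^{p+1}\rho\,dy \;\leq\; \varepsilon\!\int(\partial_sw)^2\rho\,dy + \frac{C_2}{s^a}|\mathcal{E}_0[w]| + \frac{C_3}{s^a}.
\]

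Finally, differentiating \eqref{equ:lyafun} and using the observation $|\mathcal{I}[w]|\le Cs^{-a}\int(|w|^{p+1}+1)\rho$ (so that $\mathcal{E}[w]$ differs from $\mathcal{E}_0[w]$ by a term of the same type already estimated) yields
\[
\frac{d}{ds}\mathcal{J}[w] = e^{\frac{\gamma}{a-1}s^{1-a}}\!\Bigl(\frac{d\mathcal{E}[w]}{ds} - \gamma s^{-a}\mathcal{E}[w]\Bigr) - \theta(a-1)s^{-a}.
\]
Since $e^{\gamma s^{1-a}/(a-1)}\in[1,K]$ for $s\ge\hat s_0$, substituting the previous estimates collapses the $\mathcal{E}[w]$ contributions into $(C_4-\gamma)s^{-a}\mathcal{E}_0[w]$ up to residuals, the dissipation into $-(1-C\varepsilon-Cs^{-a})\int(\partial_sw)^2\rho$, and the constant residue into $(C_5-\theta(a-1))s^{-a}$. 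The value $\gamma=8C_0\bigl(\tfrac{p+1}{p-1}\bigr)^2$ is designed so that $\gamma>C_4$ once the constants from the previous step are tracked explicitly (the factor $8$ accommodates the Young/H\"older losses and the $\mathcal{E}\leftrightarrow\mathcal{E}_0$ switch). Picking $\varepsilon$ small and $\hat s_0$ large so the bracket in front of the dissipation is at least $1/2$, and $\hat\theta_0\ge C_5/(a-1)$, then gives $\tfrac{d}{ds}\mathcal{J}[w]\le -\tfrac12\int(\partial_sw)^2\rho\,dy$, and integration from $s_1$ to $s_2$ produces \eqref{equ:estimateJinT}. The hardest step is the third: the scale-critical $L^{p+1}$ nonlinearity and the logarithmic smallness $s^{-a}$ must be made to interact via testing against $w$ in exactly the right way to be compatible with the linear-in-$\mathcal{E}$ damping supplied by the exponential prefactor, and all universal constants must be tracked with enough care to verify that the stated choice of $\gamma$ is indeed sufficient.
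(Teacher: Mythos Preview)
Your approach is essentially the paper's own: derive the identity for $\tfrac{d}{ds}\mathcal{E}$ by testing with $w_s\rho$ (your $R[w]$ is exactly the paper's identity \eqref{equ:Id1}), bound the residue by $Cs^{-a}\int(|w|^{p+1}+1)\rho$, then test with $w\rho$ to control $\int|w|^{p+1}\rho$ in terms of the energy and $\int w_s^2\rho$, and finally use the exponential prefactor in $\mathcal{J}$ to absorb the energy term.

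One point to correct: you frame the last step as choosing $\gamma>C_4$ so that $(C_4-\gamma)s^{-a}\mathcal{E}_0[w]\le 0$, but $\mathcal{E}_0[w]$ has no a priori sign, so this inequality fails when $\mathcal{E}_0<0$. The paper instead arranges an \emph{exact} cancellation: tracking the constants in the $w\rho$ identity with $\mathcal{E}[w]$ (not $\mathcal{E}_0[w]$) gives precisely $\int|w|^{p+1}\rho\le \tfrac{4(p+1)}{p-1}\mathcal{E}[w]+\tfrac12\int w_s^2\rho+C$, and after multiplying by $C_1s^{-a}$ the coefficient in front of $\mathcal{E}[w]$ is exactly $\gamma=8C_0\bigl(\tfrac{p+1}{p-1}\bigr)^2$; the factor $e^{\gamma s^{1-a}/(a-1)}$ is then an integrating factor that kills this term identically. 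Your version recovers this once you replace $\mathcal{E}_0$ by $\mathcal{E}$ in the second identity (the difference $\mathcal{I}[w]$ is $O(s^{-a}\int|w|^{p+1}\rho)$ and is absorbed with the other perturbative terms).
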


\noindent As mentioned above, the existence of this Lyapunov functional $\mathcal{J}$ is a crucial step in the derivation of the blow-up rate for equation \eqref{equ:problem}. Indeed, with the functional $\mathcal{J}$ and some more work, we are able to adapt the analysis in \cite{GMSiumj04} for equation \eqref{equ:problem} in the case $h \equiv 0$ and get the following result:
\begin{theo}[\textbf{Blow-up rate for equation \eqref{equ:problem}}] \label{theo:blrate} Let $a, p, n, M$ be fixed, $p$ satisfy \eqref{equ:rangep}. There exists $\hat{s}_1=\hat{s}_1(a, p, n, M) \geq \hat{s}_0$ such that if $u$ is a blow-up solution of equation \eqref{equ:problem} with a blow-up time $T$, then \\
$(i)$ for all $s \geq s' = \max\{\hat{s}_1, -\log T\}$, 
\begin{equation}\label{equ:boundw}
\|w_{x_0,T}(y,s)\|_{L^\infty(\mathbb{R}^n)} \leq C,
\end{equation}
where $w_{x_0,T}$ is defined in \eqref{equ:simivariables} and $C$ is a positive constant depending only on $n, p, M$ and a bound of $\|w_{x_0,T}(\hat{s}_0)\|_{L^\infty}$.\\
$(ii)$ For all $t \in [t_1, T)$ where $t_1 = T - e^{-s'}$,  
\begin{equation}\label{equ:blrateu1}
\|u(x,t)\|_{L^\infty(\mathbb{R}^n)} \leq C (T - t)^{-\frac{1}{p-1}}.
\end{equation}
\end{theo}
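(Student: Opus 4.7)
The plan is to reproduce, in the presence of the perturbation $h$, the argument of Giga--Matsui--Sasayama \cite{GMSiumj04}, replacing the monotonicity of $\mathcal{E}_0$ by Theorem~\ref{theo:lya} and treating each extra term generated by $h$ as a small error via \eqref{equ:estimateH}. Part $(ii)$ is an immediate corollary of $(i)$: for any $x \in \mathbb{R}^n$, the definition \eqref{equ:simivariables} applied with $x_0 = x$ gives $u(x,t) = (T-t)^{-1/(p-1)}\,w_{x,T}(0,s)$, so taking the supremum in $x$ of \eqref{equ:boundw} directly yields \eqref{equ:blrateu1}.

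\textbf{Step 1: a time-integrated estimate on $\partial_s w$.} I would first establish a uniform lower bound for $\mathcal{J}[w](s)$ on $[s',\infty)$. By \eqref{equ:estimateH}, the perturbation piece of $\mathcal{E}$ is controlled by
\[
|\mathcal{I}[w](s)| \leq \frac{C}{s^a}\bigl(\|w(s)\|_{L^{p+1}_\rho}^{p+1}+1\bigr),
\]
so it can be absorbed into a small fraction of the $L^{p+1}_\rho$ contribution of $\mathcal{E}_0$ for $s$ large. A lower bound for $\mathcal{E}_0$ itself is then obtained as in \cite{GMSiumj04}, by multiplying \eqref{equ:divw1} by $w\rho$, integrating, and accounting for the extra perturbative terms again through \eqref{equ:estimateH}. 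Combining this with Theorem~\ref{theo:lya} produces
\[
\int_{s'}^{+\infty}\!\!\int_{\mathbb{R}^n}(\partial_s w)^2\,\rho\,dy\,ds \leq C,\qquad \sup_{s\ge s'} \mathcal{E}_0[w](s) \leq C.
\]

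\textbf{Step 2: the $L^{p+1}$ integral bound.} The next step is to prove the key integral estimate \eqref{equ:keyIt} for every $q \geq 2$ by induction on $q$. The base case follows from Step~1 via the identity
\[
\tfrac12 \tfrac{d}{ds} \int w^2 \rho\,dy = -\!\int|\nabla w|^2\rho\,dy - \tfrac{1}{p-1}\!\int w^2\rho\,dy + \!\int |w|^{p+1}\rho\,dy + R(s),
\]
where by \eqref{equ:estimateH} the remainder $R(s)$ is dominated by $C s^{-a}(\|w\|_{L^{p+1}_\rho}^{p+1}+1)$. The inductive step follows the bootstrap of \cite{Qu99amuc, GMSiumj04}; at each iteration the perturbative contribution is of order $s^{-a}(\|w\|_{L^{p+1}_\rho}^{(p+1)q}+1)$, which, since $a>1$, is strictly dominated by the principal terms for $s$ large. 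Hence the iteration closes exactly as in the unperturbed case and delivers \eqref{equ:keyIt} for every $q$.

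\textbf{Step 3: from integral bounds to $L^\infty$, and the main obstacle.} With \eqref{equ:keyIt} in hand, I would obtain \eqref{equ:boundw} by a standard parabolic bootstrap: apply Duhamel's formula for \eqref{equ:divw1} with respect to the semigroup associated with $\tfrac{1}{\rho}\mathrm{div}(\rho\nabla\cdot) - \tfrac{1}{p-1}$, exploit its $L^q$--$L^\infty$ smoothing properties on the weighted spaces, and control the source $|w|^{p-1}w + e^{-ps/(p-1)}h(e^{s/(p-1)}w)$ once more through \eqref{equ:estimateH}. The main obstacle throughout Steps~1--2 is to verify that at every stage of the energy and bootstrap computations the extra terms coming from $h$ remain strictly smaller than the principal ones; the decisive feature is the factor $s^{-a}$ with $a>1$ in \eqref{equ:estimateH}, which is precisely what allows the Giga--Matsui--Sasayama machinery to survive the perturbation.
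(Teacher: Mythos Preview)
Your overall strategy---Lyapunov functional $\Rightarrow$ energy bounds $\Rightarrow$ key integral estimate \eqref{equ:keyIt} $\Rightarrow$ $L^\infty$ bound---matches the paper's architecture, and your derivation of $(ii)$ from $(i)$ is exactly what the paper does. Two points of divergence are worth flagging.

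First, in Step~1 the paper does not obtain the lower bound on the energy by a direct manipulation of the identity for $\tfrac{d}{ds}\int w^2\rho$; instead it proves a new blow-up criterion (Lemma~\ref{lemm:cri}) adapted to the perturbed functional $\mathcal{J}$: if ever $-4\mathcal{J}[w](\bar s) + \tfrac{p-1}{p+1}\bigl(\int w^2\rho\bigr)^{(p+1)/2}>0$, then $w$ cannot exist for all $s$. This simultaneously gives the lower bound on $\mathcal{J}$ and the uniform $L^2_\rho$ control (Proposition~\ref{prop:boundEpsi}$(ii)$), and the paper singles it out as one of its main technical contributions. Your sketch points in the right direction but skips this mechanism.

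Second, and more substantively, your Step~3 takes a different route from the paper. You propose Duhamel with the Ornstein--Uhlenbeck semigroup and its $L^q_\rho$--$L^\infty$ smoothing. The paper instead (a) localizes the energy via a cutoff functional $\mathcal{E}_\psi$ to run the bootstrap on balls $\mathbf{B}_R$ (Proposition~\ref{prop:upElc}, Lemmas~\ref{rema:boundLpW12}--\ref{lemm:tmpq2}), (b) applies the Cazenave--Lions interpolation Lemma~\ref{lemm:intpola} to turn the time-integrated $L^{p+1}$ bound into a pointwise-in-time $L^\lambda(\mathbf{B}_R)$ bound, and (c) concludes via a parabolic interior-regularity result (Proposition~\ref{prop:regpar}) of De~Giorgi--Nash--Moser type. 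The advantage of the paper's route is that it works on unweighted local balls, so the subcritical condition $p<\tfrac{n+2}{n-2}$ enters cleanly through the exponent condition $\tfrac{1}{\beta'}+\tfrac{n}{2\alpha'}<1$ in Proposition~\ref{prop:regpar}; a direct Duhamel argument on $L^q_\rho(\mathbb{R}^n)$ would have to contend with the mismatch between the weighted smoothing estimates and the local-in-space nature of \eqref{equ:keyIt}, which is why the paper flags Proposition~\ref{prop:regpar} as a nontrivial ingredient.
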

\begin{rema} The proof of Theorem \ref{theo:blrate} is far from being a straightforward adaptation of \cite{GMSiumj04}. Indeed, three major difficulties arise in our case and make the heart of our contribution:\\
- the existence of a Lyapunov functional in similarity variables (see Theorem \ref{theo:lya} above),\\
- the control of the $L^2$-norm in terms of the energy (see $(ii)$ of Proposition \ref{prop:boundEpsi}, where we rely on a new blow-up criterion greatly simplifying the approach in \cite{GKiumj87}),\\
- the proof of a nonlinear parabolic result (see Proposition \ref{prop:regpar} below).
\end{rema}

\noindent The estimate obtained in Theorem \ref{theo:blrate} is a fundamental step in studying  the asymptotic behavior of blow-up solutions.  When $h \equiv 0$, Giga and Kohn in \cite{GKiumj87, GKcpam89} (see also \cite{GKcpam85}) obtained the following result: \textit{For a given blow-up point $x_0$, it holds that 
$$\lim_{s \to +\infty} w_{x_0,T}(y,s) = \lim_{t \to T} (T - t)^{\frac{1}{p-1}}u(x_0 + y\sqrt{T-t}, t) = \pm\kappa,$$
where $\kappa = (p-1)^{\frac{1}{p-1}}$, uniformly on compact subsets of $\mathbb{R}^n$. The result is pointwise in $x_0$. Besides, for a.e. $y$, $\lim_{s \to +\infty} \nabla w_{x_0,T}(y,s) = 0$.}\\

\noindent For our problem, when $h \not\equiv 0$ and $h$ is given in \eqref{equ:h}, we also derive an analogous result on the behavior of $w_{x_0,T}$ as $s \to +\infty$. We claim the following:
\begin{theo}[\textbf{Behavior of $w_{x_0,T}$ as $s \to +\infty$}] \label{theo:wtoin} 
Let $a, p, n, M$ be fixed, $p$ satisfy \eqref{equ:rangep}. Consider $u(t)$ a solution of equation \eqref{equ:problem} which blows up at time $T$ and $x_0$ a blow-up point. Then
$$\lim_{t \to T}(T-t)^{\frac{1}{p-1}}u(x_0 + y\sqrt{T-t},t) = \lim_{s \to +\infty}\, w_{x_0,T} (y,s) = \pm\kappa,$$
holds in $L^2_\rho$ ($L^2_\rho$ is the weighted $L^2$ space associated with the weight $\rho$ \eqref{equ:rho}), and also uniformly on each compact subset of $\mathbb{R}^n$.
\end{theo}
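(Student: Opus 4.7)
The approach is the classical $\omega$-limit scheme of Giga--Kohn, adapted to accommodate the perturbation through its $s^{-a}$-decay in similarity variables. Throughout I write $w = w_{x_0,T}$ and use the $L^\infty$ bound $\|w(s)\|_{L^\infty(\mathbb{R}^n)} \leq C$ for $s \geq s'$ provided by Theorem \ref{theo:blrate}.

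First, I would show that $\mathcal{J}[w](s)$ is bounded from below. Under the $L^\infty$ bound the potential and power terms in $\mathcal{E}_0[w]$ are obviously controlled, and the gradient term is bounded by a standard integration of \eqref{equ:divw1} tested against $w\rho$ on a unit time interval. For the perturbation piece $\mathcal{I}[w]$, integration of \eqref{equ:h} yields $|H(z)| \leq M(|z|^{p+1}/\log^a(2+z^2) + |z|)$, and the same scaling argument that produces \eqref{equ:estimateH} gives $|\mathcal{I}[w](s)| \leq Cs^{-a} \to 0$. Consequently $\mathcal{J}[w]$ is non-increasing (Theorem \ref{theo:lya}) and bounded below, so it converges, and \eqref{equ:estimateJinT} implies
\begin{equation*}
\int_{s'}^{+\infty}\!\int_{\mathbb{R}^n}(\partial_s w)^2\rho\,dy\,ds < +\infty.
\end{equation*}

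Second, for any sequence $s_n \to +\infty$ I would set $w_n(y,\tau) := w(y, s_n + \tau)$ for $\tau \in [-1,1]$. The uniform $L^\infty$ bound together with classical interior parabolic regularity applied to \eqref{equ:divw1} yields $C^{2,\alpha}$ bounds on compact sets, so along a subsequence $w_n \to w_\infty$ together with its first two space derivatives and its time derivative, locally uniformly. The perturbation term in \eqref{equ:divw1} vanishes by \eqref{equ:estimateH}, so $w_\infty$ solves the unperturbed similarity equation
\begin{equation*}
\partial_\tau w_\infty = \tfrac{1}{\rho}\text{div}(\rho \nabla w_\infty) - \tfrac{w_\infty}{p-1} + |w_\infty|^{p-1}w_\infty.
\end{equation*}
Moreover $\int_{-1}^{1} \|\partial_\tau w_n\|_{L^2_\rho}^2\,d\tau \to 0$, so $\partial_\tau w_\infty \equiv 0$; hence $w_\infty$ is a bounded stationary solution, automatically in $L^2_\rho$ by dominated convergence.

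Third, I would invoke the Giga--Kohn Liouville-type classification of bounded stationary solutions of the unperturbed similarity equation for sub-critical $p$: the only possibilities are $w_\infty \equiv 0$ and $w_\infty \equiv \pm \kappa$. The zero limit is ruled out by a non-degeneracy argument at the blow-up point, using the spectral decomposition of the linearized operator $\mathcal{L} = \Delta - \tfrac{1}{2}y\cdot\nabla + 1$ (whose positive eigenvalues $1$ and $\tfrac{1}{2}$ have finite multiplicity): if $w(s_n) \to 0$ in $L^2_\rho$ along a subsequence, an invariant-manifold argument combined with the integrable $s^{-a}$ perturbation forces $w(s) \to 0$ for all large $s$, contradicting that $x_0$ is a blow-up point. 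Hence every subsequential limit is $+\kappa$ or $-\kappa$. Finally, if both $+\kappa$ and $-\kappa$ appeared as limits, the continuity of $s \mapsto w(s)$ in $L^2_\rho$ would produce times at which $w(s)$ lies at strictly positive $L^2_\rho$-distance from $\{\pm\kappa\}$, contradicting the identification of the $\omega$-limit set. The full limit therefore exists in $L^2_\rho$, and the uniform convergence on compact subsets of $\mathbb{R}^n$ follows from the $C^{2,\alpha}$ compactness established above.

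\textbf{Main obstacle.} The delicate step is excluding the zero limit: one must check that the $s^{-a}$ perturbation does not destroy the unstable-manifold analysis around the trivial equilibrium; here the hypothesis $a > 1$ in \eqref{equ:h} provides the integrability of the error needed to control the projections on the expanding modes of $\mathcal{L}$. Compactness, identification of the limit equation, and the stationarity of $w_\infty$ are essentially routine consequences of the $L^\infty$ bound from Theorem \ref{theo:blrate} and the Lyapunov functional $\mathcal{J}$ from Theorem \ref{theo:lya}.
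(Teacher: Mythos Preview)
Your compactness and $\omega$-limit identification (Steps 1--2 and the use of the Giga--Kohn Liouville result) match the paper's argument. The genuine gap is in the exclusion of the zero limit.

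First, the operator $\mathcal{L} = \Delta - \tfrac{1}{2}y\cdot\nabla + 1$ you invoke is the linearization of \eqref{equ:divw1} around $\kappa$, not around $0$. Around $0$ the linear part is $\Delta - \tfrac{1}{2}y\cdot\nabla - \tfrac{1}{p-1}$, whose spectrum $\{-\tfrac{m}{2} - \tfrac{1}{p-1} : m \geq 0\}$ is entirely negative; there are no expanding modes to ``control'', so your proposed unstable-manifold mechanism is not the right picture.

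Second, and more seriously, even if you correctly establish that $w_{x_0}(\cdot,s) \to 0$ in $L^2_\rho$ (or locally uniformly) for all large $s$, this does \emph{not} by itself contradict that $x_0$ is a blow-up point. It only yields $|u(x,t)| \leq \epsilon (T-t)^{-1/(p-1)}$ on the shrinking parabolic region $|x - x_0| \leq R\sqrt{T-t}$, whereas the nondegeneracy result (Lemma \ref{lemm:nonde}) requires this bound on a \emph{fixed} cylinder $\mathbf{B}_r(x_0)\times(T-\sigma,T)$. The paper closes this gap by a different route: from $w_{x_0}(s_j)\to 0$ one gets $\mathcal{J}[w_{x_0}](s)\to 0$; by continuity of $\mathcal{J}[w_b](s')$ in the base point $b$ at a fixed time and the monotonicity of $\mathcal{J}$, one obtains $\mathcal{J}[w_b](s)\leq 2\epsilon$ for all $b$ near $x_0$ and all $s\geq s'$. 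The blow-up criterion (Lemma \ref{lemm:cri}) then gives $\|w_b(s)\|_{L^2_\rho}\leq C\epsilon^{1/(p+1)}$, and interpolation with the gradient bound yields $|w_b(0,s)| = (T-t)^{1/(p-1)}|u(b,t)|$ small uniformly for $b$ near $x_0$, which is exactly the hypothesis of Lemma \ref{lemm:nonde}. Your proposal omits this variation of the base point, which is the essential idea.
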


\noindent Up to changing $u_0$ in $-u_0$ and $h$ in $-h$, we may assume that $w \to \kappa$ in $L^2_\rho$ as $s \to +\infty$. Let us consider $\phi$ a positive solution  of the associated ordinary differential equation of equation \eqref{equ:divw1}
\begin{equation}\label{equ:phiODE}
\phi_s = -\frac{\phi}{p-1} + \phi^p + e^{-\frac{ps}{p-1}}h\left(e^\frac{s}{p-1}\phi\right)
\end{equation}
such that 
\begin{equation}\label{equ:solphi}
\phi(s) = \kappa + \mathcal{O}\left(\frac{1}{s^a}\right) \quad \text{as} \quad s \to + \infty, 
\end{equation}
(see Lemma \ref{ap:lemmA3} for a proof of the existence of $\phi$).\\
\noindent Let us introduce $v_{x_0,T} = w_{x_0,T} - \phi(s)$, then $\|v_{x_0,T}(y,s)\|_{L^2_\rho} \to 0$ as $s \to +\infty$ and $v_{x_0,T}$ (or $v$ for simplicity) satisfies the following equation:
\begin{equation*}
\partial_s v = (\mathcal{L} + \omega(s))v + F(v) + H(v,s),\quad \forall y \in \mathbb{R}^n, \; \forall s \in [-\log T, +\infty),
\end{equation*}
where $\mathcal{L} = \Delta - \frac{y}{2}\cdot \nabla + 1$ and $\omega$, $F$, $H$ satisfy
$$|\omega(s)| = \mathcal{O}(s^{-a})\quad\text{and} \quad |F(v)| + |H(v,s)| = \mathcal{O}(|v|^2) \quad \text{as $s \to +\infty$},$$
(see the beginning of Section \ref{sec:refasy} for the proper definitions of $\omega$, $F$ and $G$).\\
Since the linear part will play an important role in our analysis, let us point out its properties. The operator $\mathcal{L}$ is self-adjoint on $L^2_\rho(\mathbb{R}^n)$. Its spectrum  is given by
$$spec(\mathcal{L}) = \{1 - \frac{m}{2},\; m \in \mathbb{N}\},$$
and it consists of eigenvalues. The eigenfunctions of $\mathcal{L}$ are derived from Hermite polynomials:\\
- For $n = 1$, the eigenfunction corresponding to $1 - \frac{m}{2}$ is
\begin{equation}\label{equ:eigenfu1}
h_m(y) = \sum_{k= 0}^{\left[\frac{m}{2}\right]} \frac{m!}{k!(m - 2k)!}(-1)^ky^{m - 2k}, 
\end{equation}
- For $n \geq 2$: we write the spectrum of $\mathcal{L}$ as 
$$spec(\mathcal{L}) = \{ 1 - \frac{|m|}{2},\; |m| = m_1 + \dots + m_n, \;(m_1,\dots, m_n) \in \mathbb{N}^n\}.$$
For $m = (m_1, \dots, m_n) \in \mathbb{N}^n$, the eigenfunction corresponding to $1 - \frac{|m|}{2}$ is 
\begin{equation}\label{equ:eigenfu}
H_m(y) = h_{m_1}(y_1)\dots h_{m_n}(y_n),
\end{equation}
where $h_{m}$ is defined in \eqref{equ:eigenfu1}.\\

\noindent By studying the behavior of $v$ as $s \to + \infty$, we obtain the following result:
\begin{theo}[\textbf{Classification of the behavior of $w$ as $s \to +\infty$}]\label{theo:refinedasymptotic} Consider $u(t)$ a solution of equation \eqref{equ:problem} which blows-up at time $T$ and $x_0$ a blow-up point. Let $w(y,s)$ be a solution of equation \eqref{equ:divw1}. Then one of the following possibilities occurs:\\
$i)\;$ $w(y,s) \equiv \phi(s)$,\\
$ii)$ There exists $l \in \{1, \dots, n\}$ such that up to an orthogonal  transformation of coordinates, we have
$$
w(y,s) = \kappa -\frac{\kappa}{4ps} \left(\sum_{j=1}^l y_j^2 -2 l\right) + \mathcal{O}\left(\frac{1}{s^a}\right) + \mathcal{O}\left(\frac{\log s}{s^2} \right) \quad \text{as} \quad s \to +\infty.
$$
$iii)$ There exist an  integer number $m \geq 3$ and constants $c_\alpha$ not all zero such that 
$$w(y,s) = \phi(s) - e^{-\left(\frac{m}{2}-1\right)s}\sum_{|\alpha| = m}c_\alpha H_\alpha(y) + o\left(e^{-\left(\frac{m}{2}-1\right)s}\right) \quad \text{as}\quad s \to +\infty. $$
The convergence takes place in $L^2_\rho$ as well as in $\mathcal{C}^{k,\gamma}_{loc}$ for any $k \geq 1$ and some $\gamma \in (0,1)$.
\end{theo}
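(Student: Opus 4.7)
The plan is to linearize the equation for $w$ around the ODE solution $\phi(s)$ and to classify the dynamics by decomposing onto the eigenspaces of the self-adjoint operator $\mathcal{L}$ on $L^2_\rho$. By Theorem \ref{theo:wtoin}, after a sign change if necessary, $w \to \kappa$ in $L^2_\rho$, so $v = w - \phi$ tends to $0$ in $L^2_\rho$ and obeys
$$\partial_s v = (\mathcal{L} + \omega(s))v + F(v) + H(v,s),$$
with $|\omega(s)| = \mathcal{O}(s^{-a})$ and $|F(v)| + |H(v,s)| = \mathcal{O}(|v|^2)$. Recall that $\mathcal{L}$ has spectrum $\{1-m/2 : m \in \mathbb{N}\}$: one positive eigenvalue $1$ (eigenfunction $1$), one positive eigenvalue $1/2$ (span of $\{y_j\}$), the null eigenspace (span of $\{y_iy_j - 2\delta_{ij}\}$), and strictly negative eigenvalues for $m \geq 3$.

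Next, I would project $v$ onto these eigenspaces, writing $v(s,y) = v_+(s) + \sum_j \beta_j(s)\, y_j + v_0(s,y) + v_-(s,y)$. Testing the equation against the Hermite basis and using the bounds on $F$, $H$, $\omega$, one obtains an ODE system whose forcing on each projection is controlled by $\|v\|_{L^2_\rho}^2 + s^{-a}\|v\|_{L^2_\rho}$. Since $\|v(s)\|_{L^2_\rho}\to 0$, a Duhamel argument integrating the two unstable modes backwards from $+\infty$ yields $|v_+(s)| + |\beta(s)| = o(\|v(s)\|_{L^2_\rho})$, eliminating the $\lambda = 1$ and $\lambda = 1/2$ components. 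A trichotomy then arises: either $v \equiv 0$ (case (i)), or $v_0$ dominates (case (ii)), or $v_-$ dominates (case (iii)).

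For case (ii), writing $v_0(s,y) = \sum_{i,j} \gamma_{ij}(s)(y_iy_j - 2\delta_{ij})$ and projecting the equation onto the kernel of $\mathcal{L}$, the quadratic part of $|w|^{p-1}w$ near $\kappa$ produces a matrix Riccati-type equation
$$\gamma'(s) = -C_p\, \gamma(s)^2 + \mathcal{O}(s^{-a}|\gamma(s)|) + \mathcal{O}(\|v_-\|_{L^2_\rho}^2) + \mathcal{O}(|\gamma|^3),$$
with an explicit constant $C_p > 0$. Standard analysis of this equation, in the spirit of Filippas--Kohn and Vel\'azquez, gives $\gamma(s) = -\tfrac{1}{2C_p s}\Pi_l + \mathcal{O}(\log s / s^2)$, where $\Pi_l$ is an orthogonal projector of rank $l \in \{1,\dots,n\}$. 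After diagonalizing $\Pi_l$ by an orthogonal change of coordinates and adding back $\phi - \kappa = \mathcal{O}(s^{-a})$, we recover the profile in (ii). For case (iii), the same Duhamel argument on $v_-$ shows that if $m \geq 3$ is the smallest index with a non-zero projection, then $v(s) \sim -e^{-(m/2-1)s}\sum_{|\alpha|=m} c_\alpha H_\alpha$ in $L^2_\rho$; upgrading to $\mathcal{C}^{k,\gamma}_{loc}$ convergence follows from the $L^\infty$ bound of Theorem \ref{theo:blrate} combined with standard parabolic Schauder estimates.

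The main obstacle is the Riccati analysis in case (ii): one has to check that the perturbation $\omega(s)\,v$, the cubic remainder in $F(v)$, and the term $H(v,s)$ do not affect either the leading $1/s$ behavior or the logarithmic correction established in the unperturbed problem. The hypothesis $a > 1$ is decisive here: since $\gamma = \mathcal{O}(1/s)$, the forcing $\omega\gamma$ is only $\mathcal{O}(s^{-a-1})$, which integrates to the $\mathcal{O}(s^{-a})$ remainder already present in (ii). A secondary technical point is the expansion of the perturbation term $e^{-ps/(p-1)}h(e^{s/(p-1)}\phi)$ within the effective nonlinearity for $v$, which must be developed at all quadratic orders to read off the correct coefficient $C_p$; the estimates \eqref{equ:estimateH} are tailored precisely for this purpose.
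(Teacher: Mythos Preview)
Your strategy is the same as the paper's: linearize around $\phi$, project onto the eigenspaces of $\mathcal{L}$, run a competition argument, and treat the null-mode case via a matrix Riccati equation and the exponentially decaying case via Duhamel iteration. The one substantive difference is that the paper does \emph{not} carry the linear perturbation $\omega(s)v$ through the analysis. Instead it sets $V(y,s)=\beta(s)v(y,s)$ with $\beta(s)=\exp\big(-\int_s^\infty\omega(\tau)\,d\tau\big)$, so that $V$ solves $\partial_s V=\mathcal{L}V+\bar F(V,s)$ with $|\bar F|\le C V^2$ and the sharper expansion $\bar F=\tfrac{p}{2\kappa}V^2+\mathcal{O}(|V|^3)+\mathcal{O}(V^2/s^{a-1})$. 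This reduces the problem exactly to the unperturbed setting of Filippas--Kohn, Filippas--Liu and Herrero--Vel\'azquez, whose lemmas can then be quoted verbatim; in particular the Herrero--Vel\'azquez $L^r_\rho\!-\!L^q_\rho$ smoothing estimate and their nondegeneracy lemma (if $\|V\|_{L^2_\rho}=\mathcal{O}(e^{-Rs})$ for every $R$ then $V\equiv 0$) are used as black boxes. Your route of keeping $\omega(s)v$ and bounding its effect by $s^{-a}\|v\|$ is viable, but you then have to redo those auxiliary results with the extra linear term present, which you do not address.

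Two smaller points. First, your ``trichotomy'' is not how the argument actually splits: the competition lemma yields a \emph{dichotomy} (either $\|V\|_{L^2_\rho}=\mathcal{O}(e^{-\mu s})$ for some $\mu>0$, or the null component dominates); case~(i) is not a direct alternative but emerges from the exponential branch after the Duhamel iteration exhausts all modes and the nondegeneracy lemma is invoked. Second, the sign in your Riccati equation is off: with the quadratic part $\tfrac{p}{2\kappa}V^2$ the matrix equation is $A'=\tfrac{4p}{\kappa}A^2+\text{error}$ with a \emph{positive} coefficient, which forces the nonzero eigenvalues to be negative, $\lambda_i(s)=-\tfrac{\kappa}{4ps}+\cdots$, consistent with the profile in~(ii).
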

\begin{rema}
Applying our result to a space-independent solution of \eqref{equ:divw1}, we get the uniqueness of the solution of the ODE \eqref{equ:phiODE} that converges to $\kappa$ as $s \to +\infty$.
\end{rema}
\begin{rema}
Since both the perturbed ($h \not \equiv 0$) and the unperturbed ($h \equiv 0$) cases in equation \eqref{equ:problem} share the same convergence stated in Theorem \ref{theo:refinedasymptotic}, we wonder whether the perturbation $h$ may have an influence on further terms of the expansion of $w$. From our result, if case $(ii)$ occurs, we see no difference in the following term of the expansion. On the contrary, if case $(i)$ or $(iii)$ occurs, with $h(x) = \mu \frac{|x|^{p-1}x}{\log(2 + x^2)}$, we see from Lemma \ref{ap:lemmA3} that $$w(y,s) - \kappa \sim \frac{C_0(a,p,\mu)}{s^a} \quad \text{as $s \to +\infty$},$$
which is clearly different from the unperturbed case when in case $(i)$, we have $w \equiv \kappa$ and case $(iii)$, we have $w - \kappa = \mathcal{O}(e^{-s})$, (see \cite{HVaihn93}, \cite{VELtams93}).
\end{rema}
\begin{rema} If we linearize $w$ around $\kappa$, which is an explicit profile, we then fall in logarithmic scales $\mu = \frac{1}{|\log \epsilon|}$ with $\epsilon = T-t$. Further refinements in this direction should give an expansion of $w - \kappa$ in terms of powers of $\mu$, i.e in logarithmic scales of $\epsilon$. Therefore, we can not reach significantly small error terms in the expansion of the solution $w$ as $(iii)$ of Theorem \ref{theo:refinedasymptotic} describes. In order to escape this situation, a relevant approximation is required in order to go beyond all logarithmic scales, i.e approximations up to lower order terms such as $\epsilon^\alpha$ for some $\alpha > 0$. Our idea to capture such relevant terms is to abandon the explicit profile obtained as a first order approximation, namely $\kappa$, and take an implicit profile function as a first order description of the singular behavior, namely $\phi(s)$ introduced in \eqref{equ:phiODE} and \eqref{equ:solphi}. A similar idea was used by Zaag \cite{ZAAcmp02} where the solution was linearized around a less explicit profile function in order to go beyond all logarithmic scales. For our problem, we particularly take $\phi(s)$ as the implicit profile function, which is a solution of the associated ODE of equation \eqref{equ:divw1} in $w$ such that $\phi(s) \to \kappa$ as $s \to +\infty$. By linearizing the solution $w$ around $\phi$, we can get to error terms of polynomial order $\epsilon^{\left(\frac{m}{2} - 1\right)}$, as stated in $(iii)$ of Theorem \ref{theo:refinedasymptotic}. 
\end{rema}

\begin{rema} When $h(x) = |x|^q$ with $q \in (1,p)$, we see that 
\begin{equation*}
\phi(s) -\kappa \sim C_0'(p,q)e^{-\lambda s} \quad \text{as} \quad s \to +\infty.
\end{equation*}
If case $(ii)$ in Theorem \ref{theo:refinedasymptotic} holds, we then recover the same expansion as in the unperturbed case $(h \equiv 0)$. On the contrary, if case $(i)$ or $(iii)$ occurs, then
\begin{equation*}
w(y,s) -\kappa \sim C_0'(p,q)e^{-\lambda s} \quad \text{as} \quad s \to +\infty.
\end{equation*}
Moreover, if case $(iii)$ in Theorem \ref{theo:refinedasymptotic} holds, we have  new terms in the expansion of $w$ which was not available in the unperturbed case, namely
$$w(y,s) = \kappa - \sum_{k =1}^{K}C_ke^{-k\lambda s} - e^{-\left(\frac{m}{2}-1\right)s} \sum_{|\alpha| = m}c_\alpha H_\alpha(y) + o\left(e^{-\left( \frac{m}{2}-1\right)s}\right)\;\text{as $s \to +\infty$},$$
where $C_k, k = 1, 2,\dots, K$ are some constants depending on $p$ and $q$, and $K\in \mathbb{N}$ is the integer part of $\frac{1}{\lambda}\left(\frac{m}{2} - 1\right)$.
\end{rema}

In the last section, we will extend the asymptotic behavior of $w$ obtained in Theorem \ref{theo:refinedasymptotic} to larger regions. Particularly, we claim the following:
\begin{theo}[\textbf{Convergence extension of $w_a$ to larger regions}] \label{theo:pro} For all $K_0 > 0$,\\
$i)$ if $ii)$ of Theorem \ref{theo:refinedasymptotic} occurs, then
\begin{equation}
\sup_{|\xi| \leq K_0} \left|w(\xi \sqrt{s},s) - f_l(\xi)\right| = \mathcal{O}\left(\frac{1}{s^{a-1}}\right) + \mathcal{O}\left(\frac{\log s}{s}\right)\quad \text{as} \; s \to +\infty, 
\end{equation}
where  
$$f_l(\xi) = \kappa\left(1 + \frac{p-1}{4p} \sum_{j=1}^l \xi_j^2 \right)^{-\frac{1}{p-1}}, \quad \forall \xi \in \mathbb{R}^n,$$
with $l$ the same as in $ii)$ of Theorem \ref{theo:refinedasymptotic}.\\
\noindent $ii)$ if $iii)$ of Theorem \ref{theo:refinedasymptotic} occurs, then $m \geq 4$ is even, and 
\begin{equation}
\sup_{|\xi| \leq K_0} \left|w\left(\xi e^{\left(\frac{1}{2} - \frac{1}{m}\right)s}\right) - \psi_m(\xi)\right| \to 0  \quad \text{as} \; s \to +\infty,
\end{equation}
where 
$$\psi_m(\xi) = \kappa\left(1 + \kappa^{-p} \sum_{|\alpha| = m}c_\alpha\xi^\alpha\right)^{-\frac{1}{p-1}}, \quad \forall \xi \in \mathbb{R}^n,$$
with $c_\alpha$ the same as in Theorem \ref{theo:refinedasymptotic}.
\end{theo}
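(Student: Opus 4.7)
The plan is to introduce, in each case, a rescaled unknown whose formal $s\to+\infty$ limit is a stationary first-order PDE. The expansion from Theorem \ref{theo:refinedasymptotic} will give the behaviour of this rescaled unknown near $\xi = 0$, which selects a unique profile among the family of stationary solutions; a compactness argument combined with matched asymptotics will then propagate the convergence from a small ball in $\xi$ out to the full ball $|\xi| \leq K_0$.

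For case $(i)$, I would set $\bar{w}(\xi,s) = w(\xi\sqrt{s},s)$. A direct change of variables in \eqref{equ:divw1}, together with \eqref{equ:estimateH} and the a priori bound \eqref{equ:boundw}, gives
\[
\bar{w}_s = \frac{1}{s}\Delta_\xi\bar{w} - \left(\tfrac{1}{2} - \tfrac{1}{2s}\right)\xi\cdot\nabla_\xi\bar{w} - \frac{\bar{w}}{p-1} + |\bar{w}|^{p-1}\bar{w} + R_1,
\]
with $R_1 = \mathcal{O}(s^{-a})$ on $\xi$-compact sets. Using Euler's identity and $\kappa^p = \kappa/(p-1)$, the smooth stationary solutions of the formal limit with $f(0) = \kappa$ are exactly of the form $f(\xi) = \kappa(1 + \xi^\top A\xi)^{-1/(p-1)}$ for some positive semidefinite symmetric matrix $A$. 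By Proposition \ref{prop:regpar} applied in the rescaled variable, the family $\{\bar{w}(\cdot,s)\}_s$ is $\mathcal{C}^2_{\mathrm{loc}}$-precompact, so any subsequential limit $f_\ast$ has this form. On the other hand, substituting $y = \xi\sqrt{s}$ into Theorem \ref{theo:refinedasymptotic}$(ii)$ gives $\bar{w}(\xi,s) = \kappa - \frac{\kappa}{4p}\sum_{j=1}^l\xi_j^2 + \mathcal{O}(s^{-1}\log s) + \mathcal{O}(s^{-a})$ on each compact set of $\xi$, which fixes the quadratic form of $f_\ast$ and, after an orthogonal change of coordinates, identifies $f_\ast = f_l$. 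Uniqueness of the limit then upgrades to full convergence $\bar{w}\to f_l$, and the rate in the statement follows by combining this matching with the control on $R_1$ from \eqref{equ:estimateH}.

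For case $(ii)$, I would set $\tilde{w}(\xi,s) = w\!\left(\xi\, e^{(\frac{1}{2} - \frac{1}{m})s},s\right)$, which satisfies
\[
\tilde{w}_s = e^{-(1 - \frac{2}{m})s}\Delta_\xi\tilde{w} - \frac{1}{m}\xi\cdot\nabla_\xi\tilde{w} - \frac{\tilde{w}}{p-1} + |\tilde{w}|^{p-1}\tilde{w} + R_2.
\]
Smooth stationary solutions of the formal limit with $\tilde{w}(0) = \kappa$ are of the form $\kappa(1 + Q(\xi))^{-1/(p-1)}$, with $Q$ a homogeneous polynomial of degree $m$ (again by Euler's identity $\xi\cdot\nabla Q = mQ$). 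Substituting $y = \xi e^{(\frac{1}{2} - \frac{1}{m})s}$ into Theorem \ref{theo:refinedasymptotic}$(iii)$, the top-degree monomial $\xi^\alpha$ of $H_\alpha(y) = y^\alpha + (\text{lower degree terms})$ survives while all lower-degree contributions vanish as $s\to+\infty$, identifying $Q(\xi) = \kappa^{-p}\sum_{|\alpha|=m}c_\alpha\xi^\alpha$ and hence the limit profile as $\psi_m$. The claim that $m$ is even (and therefore $m \geq 4$, since $m = 2$ falls into the regime of case $(i)$) is a contradiction argument: if $Q$ changed sign there would be $\xi_\ast$ with $1 + \kappa^{-p}Q(\xi_\ast) = 0$, so $\psi_m$ would be singular at $\xi_\ast$ and $\tilde{w}(\xi_\ast,s) \to +\infty$, contradicting \eqref{equ:boundw}; since $Q$ is homogeneous of degree $m$, an odd $m$ always forces $Q$ to take both signs. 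Compactness plus matching as in case $(i)$ then give $\tilde{w}\to\psi_m$ in $\mathcal{C}^0_{\mathrm{loc}}$.

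The main obstacle is the propagation in $\xi$: the expansion from Theorem \ref{theo:refinedasymptotic} provides quantitative control only on $|\xi|$ arbitrarily small (where $|y| = |\xi|\sqrt{s}$ or $|\xi|e^{(\frac{1}{2}-\frac{1}{m})s}$ stays in the regime covered by the inner expansion), and one must transport this control out to $|\xi| \leq K_0$ while absorbing the diffusion $\frac{1}{s}\Delta_\xi\bar{w}$, respectively $e^{-(1 - 2/m)s}\Delta_\xi\tilde{w}$. This requires uniform-in-$s$ $\mathcal{C}^2_{\mathrm{loc}}$ estimates in the rescaled coordinates, furnished by Proposition \ref{prop:regpar}, together with a careful bookkeeping of the two error scales $\mathcal{O}(s^{-a})$ (from the perturbation $h$ via \eqref{equ:estimateH}) and $\mathcal{O}(s^{-1}\log s)$ already present in the unperturbed case. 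The overall framework mimics \cite{HVaihn93,VELtams93}; the only genuinely new verification is that the perturbation $h$ does not destroy the structure of the limiting stationary equation, which follows directly from \eqref{equ:estimateH}.
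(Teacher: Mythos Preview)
Your heuristic is appealing, but the argument has two genuine gaps that the paper's method is designed precisely to avoid.

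\textbf{First, the compactness step does not follow from Proposition \ref{prop:regpar}.} After the change of variables $\xi = y/\sqrt{s}$, the equation for $\bar{w}$ has diffusion coefficient $1/s \to 0$; it is a \emph{degenerate} parabolic equation, and Proposition \ref{prop:regpar} (which requires a non-degenerate $\Delta v$) gives no uniform-in-$s$ $\mathcal{C}^1_{\mathrm{loc}}$ or $\mathcal{C}^2_{\mathrm{loc}}$ control on $\bar{w}$. The crude bound from Lemma \ref{lemm:regupa} only gives $|\nabla_\xi\bar w|=\sqrt{s}\,|\nabla_y w|\le C\sqrt{s}$, which blows up. So the family $\{\bar{w}(\cdot,s)\}$ is not a priori precompact in $\mathcal{C}^1_{\mathrm{loc}}$, and you cannot pass to the limit in the first-order PDE. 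The same obstruction is worse in case $(ii)$, where the diffusion decays exponentially.

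\textbf{Second, the matching only pins down the limit at $\xi=0$, not on a $\xi$-ball.} Theorem \ref{theo:refinedasymptotic} gives $\mathcal{C}^{k,\gamma}_{\mathrm{loc}}$ convergence on compact sets in $y$; after $y=\xi\sqrt{s}$ this is a shrinking ball $|\xi|\le K/\sqrt{s}$, so at best you learn $f_\ast(0)=\kappa$ and the second-order jet of $f_\ast$ at $0$. You are right that this is the main obstacle, but ``compactness plus matching'' does not resolve it without the missing regularity. Even if it did, a compactness argument produces only $o(1)$ convergence, not the quantitative rate $\mathcal{O}(s^{1-a})+\mathcal{O}(\log s/s)$ claimed in part $(i)$.

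The paper (following Vel\'azquez \cite{VELcpde92}) bypasses both issues by staying in the $y$-variable: one linearizes $q=w-\varphi$ around an explicit approximate profile $\varphi$, writes the Duhamel formula for $Q=|q|$ via the semigroup $S_{\mathcal{L}}$, and measures $Q$ in the shifted norm $L^{2,r(\tau)}_\rho$ with $r(\tau)=K_0e^{(\tau-\bar s)/2}$ and $e^{s-\bar s}=s$. The linear growth $e^{\tau-\bar s}$ of $S_{\mathcal{L}}$ exactly matches the scale $\sqrt{s}$, and a nonlinear Gronwall lemma (Lemma \ref{lem:Gro}) closes the estimate; the regularizing effect of $S_{\mathcal{L}}$ then converts the $L^{2,r}_\rho$ bound to $L^\infty$ on $|y|\le K_0\sqrt{s}$ with the stated rate. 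For part $(ii)$ the paper further modifies the profile to $\varphi=\frac{\phi(s)}{\kappa}J(y,s)$ (not just $J$) so that the ODE for $\phi$ absorbs the $\mathcal{O}(s^{-a})$ perturbation and keeps the remainder in the exponential scale---this is the ``new idea'' the paper flags, and it is exactly the bookkeeping you allude to but do not carry out.
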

 
\noindent Let us mention briefly the structure of the paper. In Section \ref{sec:Lya}, we prove the existence of Lyapunov functional for equation \eqref{equ:divw1} (Theorem \ref{theo:lya}), we then get Theorem \ref{theo:blrate} and Theorem \ref{theo:wtoin}. In Section \ref{sec:refasy}, we follow the method of \cite{FLaihn93} and \cite{VELtams93} to prove Theorem \ref{theo:refinedasymptotic}. Finally, the section \ref{sec:ext} is devoted to the proof of Theorem \ref{theo:pro}.\\

\noindent \textbf{Acknowledgement:} The author is grateful to H. Zaag for his dedicated advice, suggestions and remarks during the preparation of this paper.

\section{A Lyapunov functional}\label{sec:Lya}
This section is divided in four subsections: we first prove the existence of a Lyapunov functional for equation \eqref{equ:divw1} (Theorem \ref{theo:lya}); after that, we derive a blow-up criterion for equation \eqref{equ:divw1} and some energy estimates based on this Lyapunov functional. Following the method of \cite{GMSiumj04}, we prove the boundedness of solution in similarity variables which determines the blow-up rate for solution of \eqref{equ:problem} (Theorem \ref{theo:blrate}). Finally, we derive the limit of $w$ as $s \to + \infty$, which concludes Theorem \ref{theo:wtoin}.\\

\noindent In what follows, we denote by $C_i, i = 0, 1, \dots$ positive constants depending only on $a, n, p, M$, and by $L^{q}_{\rho}(\Omega)$ the weighted $L^q(\Omega)$ space endowed with the norm
$$\|\varphi\|_{L^{q}_{\rho}(\Omega)} = \left(\int_{\Omega}|\varphi(y)|^q\rho(y) dy\right)^\frac{1}{q},$$
and by $H^1_\rho(\Omega)$ the space of function $\varphi \in L^2_\rho(\Omega)$ satisfying $\nabla \varphi \in L^2_\rho(\Omega)$, endowed with the norm
$$\|\varphi\|_{H^1_\rho(\Omega)} = \left(\|\varphi\|_{L^{2}_{\rho}(\Omega)}^2 + \frac{1}{p-1}\left\| \nabla \varphi\right\|_{L^{2}_{\rho}(\Omega)}^2\right)^\frac{1}{2}.$$
We denote by $\mathbf{B}_R(x)$ the open ball in $\mathbb{R}^n$ with center $x$ and radius $R$, and set $\mathbf{B}_R := \mathbf{B}_R(0)$.

\subsection{Existence of a Lyapunov function}
In this part, we aim at proving that the functional $\mathcal{J}$ defined in \eqref{equ:lyafun} is a Lyapunov functional for equation \eqref{equ:divw1}. Note that that functional is far from being trivial and it is our main contribution. We first claim the following lemma:
\begin{lemm} \label{lemm:lya} Let $a, p, n, M$ be fixed and $w$ be solution of equation \eqref{equ:divw1} satisfying \eqref{equ:proW}. There exists $\tilde{s}_0 = \tilde{s}_0(a, p, n, M) \geq s_0$ such that the functional of $\mathcal{E}$ defined in \eqref{equ:difE} satisfies  the following inequality, for all $s \geq \max\{\tilde{s}_0, -\log T\}$, 
\begin{equation}\label{equ:estimateDE}
\frac{d}{ds}\mathcal{E}[w](s) \leq - \frac{1}{2}\int_{\mathbb{R}^n}w_s^2\rho dy + \gamma s^{-a}\mathcal{E}[w](s) + Cs^{-a},
\end{equation}
where $\gamma = 8C_0\left(\frac{p+1}{p-1}\right)^2$, $C_0$ is introduced in \eqref{equ:estimateH} and $C$ is a positive constant depending only on $a, p, n , M$. 
\end{lemm}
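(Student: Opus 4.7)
The plan is to time-differentiate $\mathcal{E}[w]=\mathcal{E}_0[w]+\mathcal{I}[w]$ along \eqref{equ:divw1}, exploiting the fact that the corrector $\mathcal{I}[w]$ is designed precisely so that the dangerous cross-terms between $w_s$ and $h$ cancel. Computing $\frac{d}{ds}\mathcal{E}_0[w]$ by integration by parts in $L^2_\rho$ and substituting from the equation \eqref{equ:divw1} yields
\begin{equation*}
\frac{d}{ds}\mathcal{E}_0[w]=-\int w_s^2\rho\,dy + \int e^{-\frac{ps}{p-1}}h\big(e^{\frac{s}{p-1}}w\big) w_s \,\rho\,dy,
\end{equation*}
while differentiating $\mathcal{I}[w]$ using $H'=h$ and $\frac{d}{ds}\!\left[e^{\frac{s}{p-1}}w\right]=e^{\frac{s}{p-1}}\!\left(\tfrac{w}{p-1}+w_s\right)$ produces precisely the opposite cross-term. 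After cancellation,
\begin{equation*}
\frac{d}{ds}\mathcal{E}[w]=-\int w_s^2\rho\,dy + A(s),
\end{equation*}
with $A(s) = \tfrac{p+1}{p-1}e^{-\frac{p+1}{p-1}s}\int H(e^{\frac{s}{p-1}}w)\rho\,dy - \tfrac{1}{p-1}e^{-\frac{ps}{p-1}}\int h(e^{\frac{s}{p-1}}w)w\,\rho\,dy$. So the task reduces to proving $A(s)\le \tfrac12 \int w_s^2\rho\,dy + \gamma s^{-a}\mathcal{E}[w]+Cs^{-a}$.

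Next I bound $A(s)$ using \eqref{equ:h}. From the $j=0$ case of \eqref{equ:estimateH} together with the pointwise bound on $H$ obtained via the substitution $\xi=e^{\frac{s}{p-1}}\eta$ in $H(e^{\frac{s}{p-1}}w)=\int_0^{e^{\frac{s}{p-1}}w}h(\xi)\,d\xi$, and Young's inequality $|w|\le \tfrac{1}{p+1}|w|^{p+1}+\tfrac{p}{p+1}$ to absorb $\int|w|\rho$, I obtain $|A(s)|\le \tfrac{C_1}{s^a}\int|w|^{p+1}\rho\,dy+\tfrac{C_2}{s^a}$. To convert $\int|w|^{p+1}\rho$ into an expression involving $\mathcal{E}[w]$, I test \eqref{equ:divw1} against $w\rho$, integrate by parts, and use the definition of $\mathcal{E}_0$ to deduce the identity
\begin{equation*}
\frac{p-1}{p+1}\int|w|^{p+1}\rho\,dy = 2\mathcal{E}_0[w]+\int w w_s\rho\,dy - \int w\, e^{-\frac{ps}{p-1}}h\big(e^{\frac{s}{p-1}}w\big)\rho\,dy.
\end{equation*}
Applying Cauchy--Schwarz $|\int ww_s\rho|\le \epsilon \int w_s^2\rho+\tfrac{1}{4\epsilon}\int w^2\rho$ with $\epsilon$ a sufficiently large constant depending only on $p$, Young $\int w^2\rho \le \tfrac{2}{p+1}\int|w|^{p+1}\rho+\tfrac{p-1}{p+1}$, and the bound $|\mathcal{I}[w]|\le \tfrac{C}{s^a}(\int|w|^{p+1}\rho+1)$ (arising from the same computation as for $A$), then taking $s\ge\tilde s_0$ large enough to absorb the resulting $O(s^{-a})$ corrections on the left, yields an inequality of the form
\begin{equation*}
\int|w|^{p+1}\rho\,dy \le \tfrac{16(p+1)}{p-1}\mathcal{E}[w] + C_3\int w_s^2\rho\,dy + C_4.
\end{equation*}

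Substituting this into the bound on $A(s)$ and enlarging $\tilde s_0$ if necessary so that $\tfrac{C_1 C_3}{s^a}\le \tfrac12$ delivers the differential inequality \eqref{equ:estimateDE} with $\gamma = \tfrac{16(p+1)C_1}{p-1}$; a careful bookkeeping of the constants (tracking the exact Young and Cauchy--Schwarz factors) matches this to the value $\gamma=8C_0(\tfrac{p+1}{p-1})^2$ declared in \eqref{equ:lyafun}. The main obstacle is the combination of the identity step and the absorption step: since $\mathcal{E}_0[w]$ may be very negative on a blowing-up solution, $\int|w|^{p+1}\rho$ cannot be controlled by the energy alone, and the key trick is to use the $w$-test identity above, which introduces $\int ww_s\rho$; Cauchy--Schwarz on this term consumes half of the dissipation $\int w_s^2\rho$, which is precisely why the factor $\tfrac12$ (rather than $1$) appears in front of $\int w_s^2\rho$ in the conclusion of Lemma \ref{lemm:lya}.
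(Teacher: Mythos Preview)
Your proof is correct and follows essentially the same route as the paper: derive the identity $\frac{d}{ds}\mathcal{E}[w]=-\int w_s^2\rho\,dy+A(s)$ by multiplying \eqref{equ:divw1} against $w_s\rho$, bound $A(s)$ by $C_1s^{-a}\int|w|^{p+1}\rho\,dy+Cs^{-a}$ via \eqref{equ:estimateH}, then multiply \eqref{equ:divw1} against $w\rho$ to control $\int|w|^{p+1}\rho\,dy$ in terms of $\mathcal{E}[w]$, $\int w_s^2\rho\,dy$ and lower-order terms, and finally substitute and absorb for $s$ large. The paper uses the single-step Young inequality $|w_sw|\le \epsilon(|w_s|^2+|w|^{p+1})+C(\epsilon)$ with $\epsilon=\frac{p-1}{4(p+1)}$, whereas you split it as $|ww_s|\le\epsilon w_s^2+\frac{1}{4\epsilon}w^2$ followed by $w^2\le\frac{2}{p+1}|w|^{p+1}+\frac{p-1}{p+1}$; both are equivalent and your constant $\frac{16(p+1)}{p-1}$ can be sharpened to the paper's $\frac{4(p+1)}{p-1}$ with tighter choices, yielding the stated $\gamma$.

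One small expository correction: your final sentence attributes the factor $\tfrac12$ in front of $\int w_s^2\rho\,dy$ to Cauchy--Schwarz ``consuming half of the dissipation''. This is not quite the mechanism. The $\int w_s^2\rho\,dy$ term produced by Cauchy--Schwarz enters only through the bound on $\int|w|^{p+1}\rho\,dy$, and is therefore multiplied by $C_1s^{-a}$ when substituted back into the bound for $A(s)$; the $\tfrac12$ arises simply by taking $\tilde s_0$ large enough that $C_1C_3 s^{-a}\le\tfrac12$, exactly as you wrote two sentences earlier. In particular the loss is $o(1)$ as $s\to\infty$, not a fixed half, and any factor strictly less than $1$ would serve equally well here.
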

\noindent Let us first derive Theorem \ref{theo:lya} from Lemma \ref{lemm:lya} which will be proved later.
\begin{proof}[\textbf{Proof of Theorem \ref{theo:lya} admitting Lemma \ref{lemm:lya}}]
Differentiating the functional $\mathcal{J}$ defined in \eqref{equ:lyafun}, we obtain 
\begin{align*}
\frac{d}{ds}\mathcal{J}[w](s) &= \frac{d}{ds}\left\{\mathcal{E}[w](s)e^{\frac{\gamma}{a-1}s^{1-a}} + \theta s^{1-a}\right\}\\
& = \frac{d}{ds}\mathcal{E}[w](s)e^{\frac{\gamma}{a-1}s^{1-a}} - \gamma s^{-a}\mathcal{E}[w](s)e^{\frac{\gamma}{a-1}s^{1-a}} - (a-1)\theta s^{-a}\\
& \leq - \frac{1}{2} e^{\frac{\gamma}{a-1}s^{1-a}}\int_{\mathbb{R}^n}w_s^2\rho dy  + \left[C e^{\frac{\gamma}{a-1}s^{1-a}} - (a-1)\theta\right]s^{-a} \quad \text{(use \eqref{equ:estimateDE}).}
\end{align*}
Choosing $\theta$ large enough such that $C e^{\frac{\gamma}{a-1}{\tilde{s}_0}^{1-a}} - (a-1)\theta \leq 0$ and noticing that $e^{\frac{\gamma}{a-1}s^{1-a}} \geq 1$ for all $s > 0$, we derive 
$$
\frac{d}{ds}\mathcal{J}[w](s) \leq -\frac{1}{2} \int_{\mathbb{R}^n}w_s^2\rho dy, \quad \forall s \geq \tilde{s}_0.
$$
This implies inequality \eqref{equ:estimateJinT} and concludes the proof of Theorem \ref{theo:lya}, assuming that Lemma \ref{lemm:lya} holds.
\end{proof}
\noindent It remains to prove Lemma \ref{lemm:lya} in order to conclude the proof of Theorem \ref{theo:lya}.
\begin{proof}[\textbf{Proof of Lemma \ref{lemm:lya} }] 
Multiplying equation \eqref{equ:divw1} with $w_s\rho$ and integrating by parts:
\begin{align*}
\int_{\mathbb{R}^n} |w_s|^2\rho = -\frac{d}{ds} \left\{\int_{\mathbb{R}^n} \left( \frac{1}{2}|\nabla w|^2 + \frac{1}{2(p-1)} |w|^2 - \frac{1}{p+1}|w|^{p+1}\right)\rho dy \right\}&\\
+ e^{-\frac{ps}{p-1}}\int_{\mathbb{R}^n} h\left(e^{\frac{s}{p-1}}w\right)w_s\rho dy&.
\end{align*}

\noindent For the last term of the above expression, denoting $H(z) = \int_0^z h(\xi)d\xi$, we write in the following:
\begin{align*}
e^{-\frac{ps}{p-1}}\int_{\mathbb{R}^n} h\left(e^{\frac{s}{p-1}}w\right)w_s\rho dy = e^{-\frac{(p+1)s}{p-1}} \int_{\mathbb{R}^n} h\left(e^{\frac{s}{p-1}}w\right)\left(e^{\frac{s}{p-1}} w_s + \frac{e^{\frac{s}{p-1}}}{p-1}w \right)\rho dy&\\
- \frac{1}{p-1}e^{-\frac{ps}{p-1}}\int_{\mathbb{R}^n} h\left(e^{\frac{s}{p-1}}w\right)w\rho dy&\\
= e^{-\frac{p+1}{p-1}s} \frac{d}{ds} \int_{\mathbb{R}^n} H\left(e^{\frac{s}{p-1}}w\right)\rho dy- \frac{1}{p-1} e^{-\frac{ps}{p-1}}\int_{\mathbb{R}^n} h\left(e^{\frac{s}{p-1}}w\right)w\rho dy&.
\end{align*}
This yields 
\begin{align*}
\int_{\mathbb{R}^n} |w_s|^2\rho dy = -\frac{d}{ds} \left\{\int_{\mathbb{R}^n} \left( \frac{1}{2}|\nabla w|^2 + \frac{1}{2(p-1)} |w|^2 - \frac{1}{p+1} |w|^{p+1}\right)\rho dy \right\}&\\
+ \frac{d}{ds} \left\{e^{-\frac{p + 1}{p-1}s}\int_{\mathbb{R}^n} H\left(e^{\frac{s}{p-1}}w\right)\rho dy\right\}&\\
+ \frac{p+1}{p-1}e^{-\frac{p + 1}{p-1}s}\int_{\mathbb{R}^n} H\left(e^{\frac{s}{p-1}}w\right)\rho dy&\\
- \frac{1}{p-1} e^{-\frac{ps}{p-1}}\int_{\mathbb{R}^n} h\left(e^{\frac{s}{p-1}}w\right)w\rho dy&. 
\end{align*}
From the definition of the functional $\mathcal{E}$ given in \eqref{equ:difE}, we derive a first identity in the following: 
\begin{align}
\frac{d}{ds}\mathcal{E}[w](s) = -\int_{\mathbb{R}^n} |w_s|^2\rho dy + \frac{p+1}{p-1}e^{-\frac{p + 1}{p-1}s}\int_{\mathbb{R}^n} H\left(e^{\frac{s}{p-1}}w\right)\rho dy &\nonumber\\
 - \frac{1}{p-1} e^{-\frac{ps}{p-1}}\int_{\mathbb{R}^n} h\left(e^{\frac{s}{p-1}}w\right)w\rho dy&. \label{equ:Id1}
\end{align}
A second identity is obtained by  multiplying equation \eqref{equ:divw1} with $w\rho$ and integrating by parts:
\begin{align*}
\frac{d}{ds}\int_{\mathbb{R}^n} |w|^2\rho dy  = -4\left\{\int_{\mathbb{R}^n} \left( \frac{1}{2}|\nabla w|^2 + \frac{1}{2(p-1)} |w|^2 - \frac{1}{p+1} |w|^{p+1}\right) \rho dy \right.&\\
\left.- e^{-\frac{(p + 1)s}{p-1}}\int_{\mathbb{R}^n} H\left(e^{\frac{s}{p-1}}w\right)\rho dy\right\}&\\
+ \left(2 - \frac{4}{p+1}\right)\int_{\mathbb{R}^n} |w|^{p+1}\rho dy - 4e^{-\frac{p + 1}{p-1}s}\int_{\mathbb{R}^n} H\left(e^{\frac{s}{p-1}}w\right)\rho dy &\\
+ 2e^{-\frac{ps}{p-1}}\int_{\mathbb{R}^n} h\left(e^{\frac{s}{p-1}}w\right)w\rho dy&.
\end{align*}
Using again the definition of $\mathcal{E}$ given in \eqref{equ:difE}, we derive the second identity in the following:
\begin{align}
\frac{d}{ds} \int_{\mathbb{R}^n} |w|^2\rho dy & = -4 \mathcal{E}[w](s) + 2\frac{p-1}{p+1}\int_{\mathbb{R}^n} |w|^{p+1}\rho dy\nonumber\\
&-  4e^{-\frac{p + 1}{p-1}s}\int_{\mathbb{R}^n} H\left(e^{\frac{s}{p-1}}w\right)\rho dy + 2e^{-\frac{ps}{p-1}}\int_{\mathbb{R}^n} h\left(e^{\frac{s}{p-1}}w\right)w\rho dy.\label{equ:Id2}
\end{align}
From \eqref{equ:Id1}, we estimate
\begin{align*}
\frac{d}{ds}\mathcal{E}[w](s) &\leq -\int_{\mathbb{R}^n} |w_s|^2\rho dy\\
& + \frac{p+1}{p-1}\int_{\mathbb{R}^n}\left\{ \left|e^{-\frac{p + 1}{p-1}s} H\left(e^{\frac{s}{p-1}}w\right)\right| + \left|e^{-\frac{ps}{p-1}} h\left(e^{\frac{s}{p-1}}w\right)w \right| \right\}\rho dy. 
\end{align*}
From \eqref{equ:estimateH} and using the fact that $|w| \leq |w|^{p+1} + 1$, we obtain for all $s \geq s_0$,
\begin{equation}\label{equ:estimateHhe}
\left|e^{-\frac{p + 1}{p-1}s}  H\left(e^{\frac{s}{p-1}}w\right)\right| + \left|e^{-\frac{p}{p-1}s} h\left(e^{\frac{s}{p-1}}w\right)w \right| \leq 2C_0 s^{-a}\left(|w|^{p+1} + 1\right).
\end{equation}
Using \eqref{equ:estimateHhe} yields 
\begin{equation}\label{equ:estDE3}
\frac{d}{ds}\mathcal{E}[w](s) \leq -\int_{\mathbb{R}^n} |w_s|^2\rho dy + C_1s^{-a}\int_{\mathbb{R}^n} |w|^{p+1}\rho dy + C_1s^{-a},
\end{equation}
where $C_1 = 2C_0\frac{p+1}{p-1}$.\\
From \eqref{equ:Id2}, we have
\begin{align*}
\int_{\mathbb{R}^n} |w|^{p+1}\rho dy &\leq \frac{2(p+1)}{p-1}\mathcal{E}[w](s) + \frac{p+1}{p-1}\int_{\mathbb{R}^n} |w_s w| \rho dy \\
& \quad +   \frac{2(p+1)}{p-1}\int_{\mathbb{R}^n}  \left( \left| e^{-\frac{p + 1}{p-1}s} H\left(e^{\frac{s}{p-1}}w\right) \right|+ \left | e^{-\frac{ps}{p-1}} h\left(e^{\frac{s}{p-1}}  w\right)w \right| \rho dy \right).
\end{align*}
Using the fact that $|w_sw| \leq \epsilon (|w_s|^2 + |w|^{p+1}) + C_2(\epsilon)$ for all $\epsilon > 0$ and \eqref{equ:estimateHhe}, we obtain 
\begin{align*}
\int_{\mathbb{R}^n} |w|^{p+1}\rho dy &\leq  \frac{2(p+1)}{p-1}\mathcal{E}[w](s) + \epsilon' \int_{\mathbb{R}^n} |w_s|^2\rho dy  \\
& \quad + \left(\epsilon' + 2C_1s^{-a}\right)\int_{\mathbb{R}^n} |w|^{p+1}\rho dy + 2C_1s^{-a} + C_3,
\end{align*}
where $\epsilon' = \epsilon\frac{p+1}{p-1}$, $C_3 = 2C_1 + C_2\frac{p+1}{p-1}$.\\
Taking $\epsilon = \frac{p-1}{4(p+1)}$ and $s_1$ large enough such that $2C_1s^{-a} \leq \frac{1}{4}$ for all $s \geq s_1$, we see that
\begin{equation}\label{equ:estWp1}
\int_{\mathbb{R}^n} |w|^{p+1}\rho dy \leq \frac{4(p+1)}{p-1}\mathcal{E}[w](s) + \frac{1}{2}\int_{\mathbb{R}^n} |w_s|^2\rho dy  + C_4, \quad \forall s > s_1,
\end{equation}
with $C_4 = \frac{C_3}{2} + \frac{1}{8}$.\\
Substituting \eqref{equ:estWp1} into \eqref{equ:estDE3} yields \eqref{equ:estimateDE} with  $\tilde{s}_0 = \max\{s_0,s_1\}$. This concludes the proof of Lemma \ref{lemm:lya}. Since we have already showed that Theorem \ref{theo:lya} is a direct consequence of Lemma \ref{lemm:lya}, this is also the conclusion of Theorem \ref{theo:lya}.
\end{proof}

\subsection{A blow-up criterion for the equation in similarity variables}
In this part, we give a new blow-up criterion for equation \eqref{equ:divw1}. Then, we will use it to control the $L^2$-norm in terms of the energy (see $(ii)$ of Proposition \ref{prop:boundEpsi}). We claim the following:
\begin{lemm}\label{lemm:cri}
Let $a, p, n, M$ be fixed and $w$ be solution of equation \eqref{equ:divw1} satisfying \eqref{equ:proW}. If there exists $\tilde{s}_1 =\tilde{s}_1(a, p, n, M) \geq \max\{\hat{s}_0, -\log T\}$ such that
\begin{equation}\label{equ:bloCri}
-4\mathcal{J}[w](\bar{s})  + \frac{p-1}{p+1}\left(\int_{\mathbb{R}^n} |w(y,\bar{s})|^2\rho dy\right)^\frac{p+1}{2} > 0 \quad \text{for some} \; \bar{s} \geq \tilde{s}_1,
\end{equation}
then $w$ is not defined for all $(y,s) \in \mathbb{R}^n\times[\bar{s}, +\infty)$.
\end{lemm}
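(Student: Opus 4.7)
The plan is to argue by contradiction: assume $w$ is defined on $\mathbb{R}^n\times[\bar s,+\infty)$, set $N(s):=\int_{\mathbb{R}^n}w(y,s)^{2}\rho(y)\,dy$, and aim to force $N(s)\to+\infty$ in finite $s$, which will contradict \eqref{equ:proW}. The central quantity will be $\Phi(s):=-4\mathcal{J}[w](s)+\tfrac{p-1}{p+1}N(s)^{(p+1)/2}$, so that the hypothesis \eqref{equ:bloCri} reads $\Phi(\bar s)>0$.

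First I would recycle identity \eqref{equ:Id2} (obtained in the proof of Lemma~\ref{lemm:lya}), combine it with the perturbation bound \eqref{equ:estimateHhe} and Jensen's inequality $\int|w|^{p+1}\rho\geq N(s)^{(p+1)/2}$ (valid since $\rho$ is a probability density and $(p+1)/2>1$), to obtain, for $\tilde s_1$ sufficiently large and all $s\geq\bar s$,
\[
N'(s)\,\geq\,-4\mathcal{E}(s)\,+\,\tfrac{p-1}{p+1}\,N(s)^{(p+1)/2}\,-\,C\,s^{-a}.
\]
Writing $-4\mathcal{E}(s)=\alpha(s)^{-1}\bigl(-4\mathcal{J}(s)+4\theta s^{1-a}\bigr)$ with $\alpha(s):=e^{\gamma s^{1-a}/(a-1)}\in[1,\alpha_0]$, and substituting $-4\mathcal{J}(s)=\Phi(s)-\tfrac{p-1}{p+1}N(s)^{(p+1)/2}$, the previous inequality becomes
\[
N'(s)\,\geq\,\tfrac{\Phi(s)}{\alpha(s)}\,+\,\tfrac{p-1}{p+1}\,N(s)^{(p+1)/2}\,\tfrac{\alpha(s)-1}{\alpha(s)}\,+\,\tfrac{4\theta s^{1-a}}{\alpha(s)}\,-\,C\,s^{-a},
\]
where the middle two summands are non-negative.

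The crucial step is then to propagate the positivity $\Phi(s)\geq\Phi(\bar s)>0$ for all $s\geq\bar s$. Differentiating $\Phi$ and using both the Lyapunov bound $-4\mathcal{J}'(s)\geq 2\int w_s^{2}\rho$ from Theorem~\ref{theo:lya} and the above inequality for $N'$, one discovers a decisive cancellation of the $N^p$ terms between the contribution of $\tfrac{p-1}{2}N^{(p-1)/2}\cdot(-4\mathcal{E})$ and the contribution of $\tfrac{(p-1)^{2}}{2(p+1)}N^p$ coming from Step~1, yielding
\[
\Phi'(s)\,\geq\,2\int w_s^{2}\rho\,+\,\tfrac{p-1}{2\alpha(s)}\,N(s)^{(p-1)/2}\,\Phi(s)\,-\,C\,s^{-a}\,N(s)^{(p-1)/2}.
\]
If $\Phi$ first vanished at some $s_{*}>\bar s$, then $\Phi'(s_{*})\leq 0$; at $s_*$ the middle summand disappears, while Cauchy--Schwarz $(N'(s))^{2}\leq 4N(s)\int w_s^{2}\rho$ combined with the bound on $|N'|$ on $[\bar s,s_{*}]$ (inherited from $\Phi\geq 0$ on that interval) allows the error $Cs_{*}^{-a}N(s_{*})^{(p-1)/2}$ to be absorbed in the first summand, provided $\tilde s_1$ is taken large enough. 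This contradicts $\Phi'(s_{*})\leq 0$, so $\Phi(s)\geq\Phi(\bar s)$ throughout.

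Finally, with $\Phi(s)\geq\Phi(\bar s)>0$, the inequality for $N'$ gives $N'(s)\geq\Phi(\bar s)/(2\alpha_0)>0$ for $s$ sufficiently large, hence $N(s)\to+\infty$; once $N(s)$ exceeds a fixed threshold at which $\tfrac{p-1}{p+1}N^{(p+1)/2}$ dominates $Cs^{-a}$, we obtain the clean Bernoulli-type inequality $N'(s)\geq\tfrac{p-1}{2(p+1)}N(s)^{(p+1)/2}$ with exponent $(p+1)/2>1$, whose solutions blow up in finite $s$, contradicting the global existence of $w$. I expect the positivity-propagation step to be the main obstacle: the error $Cs^{-a}N^{(p-1)/2}$ scales with $N$ and cannot be controlled by the $\Phi$-term near a potential zero of $\Phi$, so the Cauchy--Schwarz/Lyapunov coupling described above is what makes the argument close; a secondary care is needed because $\mathcal{J}(\bar s)$ may take arbitrary sign, which is why $\Phi$ (rather than $N$ or $-\mathcal{J}$ individually) is the right monitoring quantity.
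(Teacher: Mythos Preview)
Your overall contradiction strategy and your derivation of the differential inequality for $N(s)$ are on the right track and coincide with the paper's setup. However, you take a substantially more complicated route to the positivity propagation than is needed, and your ``absorption'' step does not close as written.

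The paper's argument avoids $\Phi$ altogether. Once one has (after absorbing the $Cs^{-a}$ error into the $4\theta s^{1-a}/\alpha$ term, which is legitimate for $\tilde s_1$ large since $s^{1-a}\gg s^{-a}$)
\[
N'(s)\ \geq\ -4\,\mathcal{J}[w](s)\ +\ \tfrac{p-1}{p+1}\,N(s)^{(p+1)/2},
\]
one simply invokes Theorem~\ref{theo:lya}: $\mathcal{J}$ is non-increasing, so $-4\mathcal{J}(s)\geq -4\mathcal{J}(\bar s)=:A$ for all $s\geq\bar s$. This yields the autonomous inequality $N'(s)\geq A+BN(s)^{(p+1)/2}$ with \emph{constant} $A$; the hypothesis reads $A+BN(\bar s)^{(p+1)/2}>0$, hence $N'(\bar s)>0$, hence $N$ is increasing, hence the positivity persists for all $s\geq\bar s$, and a one-line integration gives finite-time blow-up. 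No differentiation of $\Phi$, no Cauchy--Schwarz coupling.

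Regarding your absorption step: at a first zero $s_*$ of $\Phi$, your displayed inequality yields $2\int w_s^{2}\rho(s_*)\leq Cs_*^{-a}N(s_*)^{(p-1)/2}$, and Cauchy--Schwarz then gives only an \emph{upper} bound $|N'(s_*)|\leq C's_*^{-a/2}N(s_*)^{(p+1)/4}$. To reach a contradiction you need a competing lower bound on $N'(s_*)$ of at least the same order, but the $\Phi/\alpha$ contribution vanishes at $s_*$ and the remaining positive terms in your step~4 are of size $s_*^{1-a}$ or $s_*^{1-a}N(s_*)^{(p+1)/2}$, which do not dominate without an a~priori lower bound on $N(s_*)$---and none is available. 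In fact the error term $-Cs^{-a}N^{(p-1)/2}$ in your inequality for $\Phi'$ is spurious: in your own step~4 the combination $4\theta s^{1-a}/\alpha - Cs^{-a}$ is already non-negative for $s\geq\tilde s_1$, so when you multiply by $\tfrac{p-1}{2}N^{(p-1)/2}$ it stays non-negative and the clean bound $\Phi'\geq \tfrac{p-1}{2\alpha}N^{(p-1)/2}\Phi$ follows, from which $\Phi>0$ is immediate by Gronwall. Either way, the monotonicity of $\mathcal{J}$ is the key mechanism, and using it directly (as the paper does) is both shorter and more transparent.
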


\begin{proof} We proceed by contradiction and suppose that $w$ is defined for all $s \in [\bar{s}, +\infty)$. From definition of $\mathcal{J}$ in \eqref{equ:lyafun} and from  \eqref{equ:Id2}, \eqref{equ:estimateHhe}, we have  for all $s \geq s_0$,
\begin{align}
\frac{d}{ds} \int_{\mathbb{R}^n} |w|^2\rho dy  &\geq -4 e^{-\frac{\gamma}{a-1}s^{1-a}}\left(\mathcal{J}[w](s) - \theta s^{1-a}\right)\nonumber\\
& + 2\left(\frac{p-1}{p+1} - 4C_0s^{-a}\right)\int_{\mathbb{R}^n} |w|^{p+1}\rho dy  - 8C_0s^{-a}.\label{equ:ww1}
\end{align}
We take $s_1$ large enough such that 
$$4C_0s^{-a} \leq \frac{p-1}{2(p+1)} \quad \text{and} \quad e^{-\frac{\gamma}{a-1}s^{1-a}} - \frac{2C_0}{s} > 0 \quad \text{for all} \quad s \geq s_1.$$
Then, using Jensen's inequality and noting that $e^{-\frac{\gamma}{a-1}s^{1-a}} \leq 1$ for all $s > 0$, we get from \eqref{equ:ww1} the following: for all $s \geq \max\{0, s_0, s_1\}$,
\begin{equation}\label{equ:S1}
\frac{d}{ds} \int_{\mathbb{R}^n} |w|^2\rho dy  \geq -4\mathcal{J}[w](s) + \frac{p-1}{p+1}\left(\int_{\mathbb{R}^n} |w|^2\rho dy\right)^\frac{p+1}{2}.
\end{equation}
Setting $f(s) = \int_{\mathbb{R}^n} |w(y,s)|^2\rho dy$, $A = -4\mathcal{J}[w](\bar{s})$ and $B = \frac{p-1}{p+1}$, then using the fact that $\mathcal{J}$ is decreasing in time to get that
$$f'(s) \geq A + Bf(s)^\frac{p+1}{2}, \quad \forall s \geq \bar{s}.$$
The hypothesis reads $A + Bf(\bar{s})^\frac{p+1}{2} > 0$ which implies that 
$$f'(s) > 0 \quad \text{and} \quad A + Bf(s)^\frac{p+1}{2} > 0, \quad \forall s \geq \bar{s}.$$
By a direct integration, we obtain
$$\forall s \geq \bar{s}, \quad s - \bar{s} \leq \int_{f(\bar{s})}^{f(s)}\frac{dz}{A + Bz^\frac{p+1}{2}} \leq  \int_{f(\bar{s})}^{+\infty}\frac{dz}{A + Bz^\frac{p+1}{2}} < +\infty,$$
which is a contradiction and Lemma \ref{lemm:cri} is proved.

\end{proof}

As a consequence of Theorem \ref{theo:lya} and Lemma \ref{lemm:cri}, we obtain the following estimates which will be useful for getting Theorem \ref{theo:blrate}:
\begin{prop} \label{prop:boundEpsi} Let $w$ be solution of equation \eqref{equ:divw1} satisfying \eqref{equ:proW}, it holds that
\begin{equation*}
- Q_0 \leq \mathcal{E}[w](s) \leq 2J_0, \quad \forall s \geq \tilde{s}_2 = \max\{\hat{s}_0, -\log T\},
\end{equation*}
where $J_0 = \mathcal{J}[w](\tilde{s}_2)$ and $Q_0 = \theta \tilde{s}_2^{1-a}$. Moreover, there exists a time $\tilde{s}_3 \geq \max\{\hat{s}_0, -\log T\}$ such that for all $s \geq \tilde{s}_3$
\begin{align*}
& (i)  \quad \int_{s}^{s + 1} \left\|w_\tau(\tau)\right\|_{L^2_{\rho}(\mathbb{R}^n)}^2 d\tau\leq 2J_0, \\
& (ii) \quad \|w(s)\|_{L_\rho^2(\mathbb{R}^n)}^2 \leq J_1,\\
& (iii)\quad \|w(s)\|^{p+1}_{L^{p+1}_\rho(\mathbb{R}^n)} \leq J_2\left(1 + \|w(s)\|^{2}_{H^1_\rho(\mathbb{R}^n)}\right), \\
& (iv) \quad \|w(s)\|^{2}_{H^1_\rho(\mathbb{R}^n)} \leq J_3\left(1 + \|w_s(s)\|_{L_\rho^2(\mathbb{R}^n)}\right),\\
&(v)\quad \int_s^{s+1}\left\|w(\tau) \right\|_{L^{p+1}_{\rho}(\mathbb{R}^n)}^{2(p+1)} d\tau \leq J_4,\\
&(vi)\quad \int_s^{s+1}\left\|w(\tau) \right\|_{H^1_{\rho}(\mathbb{R}^n)}^{2} d\tau \leq J_5,
\end{align*}
where $J_i, \, i = 1,\dots, 5$ depend only on $J_0, Q_0, a, p, n, M$.
\end{prop}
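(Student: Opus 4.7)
The plan is to derive the two-sided energy bound and estimates $(i)$--$(vi)$ from four ingredients already in hand: the monotonicity of $\mathcal{J}$ (Theorem \ref{theo:lya}), the blow-up criterion (Lemma \ref{lemm:cri}), the identities \eqref{equ:Id1}--\eqref{equ:Id2}, and the refined bound \eqref{equ:estWp1}, together with the perturbation control \eqref{equ:estimateHhe}.

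\emph{Step 1: the two-sided bound on $\mathcal{E}$.} Rearranging \eqref{equ:lyafun} gives $\mathcal{E}[w](s)=(\mathcal{J}[w](s)-\theta s^{1-a})e^{-\frac{\gamma}{a-1}s^{1-a}}$. Monotonicity of $\mathcal{J}$ immediately gives the upper bound. For the lower bound I would invoke Lemma \ref{lemm:cri}: since $u$ blows up at $T$, the function $w_{x_0,T}$ is defined on all of $\mathbb{R}^n\times[-\log T,+\infty)$, so the contrapositive of the blow-up criterion forces $\frac{p-1}{p+1}\|w(s)\|_{L^2_\rho}^{p+1}\leq 4\mathcal{J}[w](s)$ for every $s\geq \tilde{s}_2$. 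In particular $\mathcal{J}[w](s)\geq 0$, whence $\mathcal{E}[w](s)\geq -\theta s^{1-a}\geq -Q_0$. As a bonus, raising the same inequality to the power $2/(p+1)$ gives $(ii)$ at once, with $J_1$ explicit in $J_0$ and $p$.

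\emph{Step 2: estimates $(i)$, $(iii)$, $(iv)$.} Integrating \eqref{equ:estimateJinT} between $s$ and $s+1$ and using $\mathcal{J}[w](s+1)\geq 0$ gives $(i)$. For $(iii)$, I would start from the algebraic identity $\tfrac{1}{p+1}\|w\|_{L^{p+1}_\rho}^{p+1}=\tfrac12\|\nabla w\|^2_{L^2_\rho}+\tfrac{1}{2(p-1)}\|w\|_{L^2_\rho}^2-\mathcal{E}_0[w]$, split $\mathcal{E}_0=\mathcal{E}-\mathcal{I}$, use $\mathcal{E}\geq -Q_0$ together with $|\mathcal{I}[w]|\leq 2C_0 s^{-a}(\|w\|_{L^{p+1}_\rho}^{p+1}+1)$ from \eqref{equ:estimateHhe}, and bound the two $H^1_\rho$-terms by $C(p)\|w\|^2_{H^1_\rho}$; for $s$ large enough the $s^{-a}$-coefficient on the nonlinear term is absorbed to the left, giving $(iii)$. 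For $(iv)$, the key point is to derive the linear (not quadratic) dependence on $\|w_s\|_{L^2_\rho}$: rewrite \eqref{equ:Id2} as $\tfrac{p-1}{p+1}\|w\|_{L^{p+1}_\rho}^{p+1}=\int w_s w\,\rho+2\mathcal{E}[w]+\text{perturbation}$, apply Cauchy--Schwarz $|\int w_s w\rho|\leq\|w_s\|_{L^2_\rho}\|w\|_{L^2_\rho}$ together with $(ii)$ and the bound on $\mathcal{E}$, and absorb the perturbation for $s$ large to obtain the intermediate estimate
\[
\|w(s)\|_{L^{p+1}_\rho}^{p+1}\leq C\bigl(1+\|w_s(s)\|_{L^2_\rho}\bigr).
\]
Substituting this back into the identity for $\|\nabla w\|^2$ and using $|\mathcal{E}|\leq \max(Q_0,2J_0)$ gives $\|\nabla w\|^2_{L^2_\rho}\leq C(1+\|w_s\|_{L^2_\rho})$, which combined with $(ii)$ yields $(iv)$.

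\emph{Step 3: estimates $(v)$ and $(vi)$.} Squaring the intermediate inequality from Step 2 gives $\|w\|^{2(p+1)}_{L^{p+1}_\rho}\leq C(1+\|w_s\|^2_{L^2_\rho})$; integrating over $[s,s+1]$ and using $(i)$ gives $(v)$. Similarly, integrating $(iv)$ over $[s,s+1]$ and using Cauchy--Schwarz $\int_s^{s+1}\|w_\tau\|_{L^2_\rho}d\tau\leq (\int_s^{s+1}\|w_\tau\|^2_{L^2_\rho}d\tau)^{1/2}\leq\sqrt{2J_0}$ gives $(vi)$.

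The main obstacle is really conceptual rather than computational: the crux is the $L^2_\rho$ bound $(ii)$ together with the lower bound on $\mathcal{E}$, both of which genuinely require the new blow-up criterion of Lemma \ref{lemm:cri}, as flagged in the remark after Theorem \ref{theo:blrate}; without this, one would be forced back to the more elaborate argument of \cite{GKiumj87}. The remaining technical care is to choose $s$ large enough (depending only on $a,p,n,M$) so that the $s^{-a}$-factors coming from \eqref{equ:estimateHhe} can be absorbed, and to propagate the linear-in-$\|w_s\|_{L^2_\rho}$ (rather than quadratic) dependence through $(iv)$, which is what makes $(vi)$ possible via a clean Cauchy--Schwarz.
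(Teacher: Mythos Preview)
Your proposal is correct and follows essentially the same approach as the paper: the two-sided bound on $\mathcal{E}$, $(i)$ and $(ii)$ come directly from the monotonicity of $\mathcal{J}$ together with the contrapositive of Lemma~\ref{lemm:cri}; $(iii)$ and $(iv)$ are obtained by combining the definition of $\mathcal{E}$ with the identity \eqref{equ:Id2}, the perturbation control \eqref{equ:estimateHhe}, Cauchy--Schwarz and $(ii)$; and $(v)$, $(vi)$ follow by squaring/integrating and using $(i)$. The only cosmetic difference is that for $(iv)$ you first isolate the intermediate bound $\|w\|_{L^{p+1}_\rho}^{p+1}\leq C(1+\|w_s\|_{L^2_\rho})$ and then feed it back into the energy identity, whereas the paper combines the two identities first and then uses $(iii)$ to absorb the perturbation; the computations are the same.
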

\begin{proof} The upper and lower bounds of $\mathcal{E}$, $(i)$ and $(ii)$ obviously follow from Theorem \ref{theo:lya} and Lemma \ref{lemm:cri} (in fact, since $w$ is defined for all $s \geq \tilde{s}_1$, condition \eqref{equ:bloCri} is never satisfied).

\noindent$(iii)$ By definition of $\mathcal{E}$ given in \eqref{equ:difE} and \eqref{equ:estimateHhe}, we get for all $s \geq \max\{s_0, -\log T\}$,
\begin{align*}
2\mathcal{E}[w](s) &\leq  \int_{\mathbb{R}^n} \left(|\nabla w|^2 + \frac{1}{p-1}|w|^2 \right)\rho dy\\
& - 2 \left(\frac{1}{p-1} - C_0s^{-a}\right)\int_{\mathbb{R}^n} |w|^{p+1}\rho dy + 2C_0s^{-a}.
\end{align*}
Let $s_1$ large enough such that for all $s \geq s_1$, $C_0s^{-a} \leq \frac{1}{2(p-1)}$, then for all $s \geq \max\{s_0,s_1,-\log T\}$,
\begin{equation*}
2\mathcal{E}[w](s) \leq  \int_{\mathbb{R}^n} \left(|\nabla w|^2 + \frac{1}{p-1}|w|^2 \right)\rho dy - \frac{1}{p-1}\int_{\mathbb{R}^n} |w|^{p+1}\rho dy + \frac{2}{p-1}.
\end{equation*}
This follows that for all $s \geq \max\{s_0,s_1,-\log T\}$,
$$\left\| w(s) \right\|_{L_\rho^{p+1}(\mathbb{R}^n)}^{p+1} \leq -2(p-1)\mathcal{E}[w](s) + (p-1)\left\| w (s) \right\|_{H_\rho^1(\mathbb{R}^n)}^{2} + 1.$$
Since $\mathcal{E}$ is bounded from below, then $(iii)$ follows. \\
$(iv)$ From the definition $\mathcal{E}$ in \eqref{equ:difE}, \eqref{equ:Id2} and \eqref{equ:estimateHhe}, we have $\forall s \geq \max\{s_0, -\log T\}$,
\begin{align*}
\left\|w(s) \right\|_{H_\rho^1(\mathbb{R}^n)}^2 &\leq \frac{1}{p-1}\frac{d}{ds} \int_{\mathbb{R}^n} |w|^2\rho dy + \frac{2(p+1)}{p-1} \mathcal{E}[w](s)\\
&\quad + \frac{4C_0(p+1)}{p-1}s^{-a}\left\| w(s) \right\|_{L_\rho^{p+1}(\mathbb{R}^n)}^{p+1} + \frac{4C_0(p+1)}{p-1}s^{-a}.
\end{align*}
Using $(iii)$, we have for all $s \geq \tilde{s}_3$,
\begin{align*}
\left\|w(s) \right\|_{H_\rho^1(\mathbb{R}^n)}^2 &\leq \frac{1}{p-1}\frac{d}{ds} \int_{\mathbb{R}^n} |w|^2\rho dy + \frac{2(p+1)}{p-1} \mathcal{E}[w](s)\\
&\quad + \frac{4C_0J_2(p+1)}{p-1}s^{-a}\left(1 + \left\| w(s)\right\|_{H_\rho^1(\mathbb{R}^n)}^{2}\right) + \frac{4C_0(p+1)}{p-1}s^{-a}.
\end{align*}
Let $s_2$ large enough such that $\frac{4C_0J_2(p+1)}{p-1}s^{-a} \leq \frac{1}{2}$ for all $s \geq s_2$ and noting that $\mathcal{E}(s)$ is bounded from above, we obtain for all $s \geq \max\{s_2, \tilde{s}_2\}$,
$$\left\|w(s) \right\|_{H_\rho^1(\mathbb{R}^n)}^2 \leq \frac{4}{p-1} \int_{\mathbb{R}^n} |ww_s|\rho dy + C_1,$$
where $C_1 = \frac{4J_0(p+1)}{p-1} + \frac{1}{J_2}$.\\
Using Schwarz's inequality and $(ii)$ yields
$$\left\|w(s) \right\|_{H_\rho^1(\mathbb{R}^n)}^2 \leq \frac{4}{p-1}\|w(s)\|_{L^2_\rho(\mathbb{R}^n)}\|w_s(s)\|_{L^2_\rho(\mathbb{R}^n)} + C_1 \leq \frac{4\sqrt{J_1}}{p-1}\|w_s(s)\|_{L^2_\rho(\mathbb{R}^n)} + C_1,$$
which follows $(iv)$.\\
Since $(v)$ and $(vi)$ follows directly from  $(i)$ and $(iii), (iv)$, we end the proof of Proposition \ref{prop:boundEpsi}.
\end{proof}

\subsection{Boundedness of the solution in similarity variables}
This section is devoted to the proof of Theorem \ref{theo:blrate}, which is a direct consequence of the following theorem:
\begin{theo}\label{theo:blrate1}  Let $a, p, n, M$ be fixed, $p$ satisfy \eqref{equ:rangep}. There exists $\hat{s}_1=\hat{s}_1(a, p, n, M) \geq \hat{s}_0$ such that if $u$ is a blow-up solution of equation \eqref{equ:problem} with a blow-up time $T$, then for all $s \geq s'=\max\{\hat{s}_1, -\log T\}$, 
\begin{equation}\label{equ:boundw1}
\|w_{x_0,T}(y,s)\|_{L^\infty(\mathbf{B}_R)} \leq C,
\end{equation}
where $C$ is a positive constant depending only on $n, p, M, R$ and a bound of $\|w_{x_0,T}(\hat{s}_0)\|_{L^\infty}$.
\end{theo}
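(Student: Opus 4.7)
The plan is to follow the Giga--Matsui--Sasayama strategy of \cite{GMSiumj04}, feeding it the Lyapunov functional $\mathcal{J}$ already built in Theorem \ref{theo:lya} and dealing with the extra difficulty brought by the perturbation $h$. The starting data is provided by Proposition \ref{prop:boundEpsi}: uniform-in-$s$ bounds for $\int_s^{s+1}\|w\|_{L^{p+1}_\rho}^{2(p+1)} d\tau$ and $\int_s^{s+1}\|w\|_{H^1_\rho}^{2} d\tau$. Since $\rho$ is bounded from below on every bounded ball, these weighted estimates translate into unweighted local space-time estimates on any $\mathbf{B}_{2R}$, which gives the $q=2$ case of the Giga--Matsui--Sasayama key integral bound \eqref{equ:keyIt}.

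The next and main analytic step is a bootstrap \`a la Quittner \cite{Qu99amuc} to promote the integrability exponent. At each stage I would multiply equation \eqref{equ:divw1} by a spatial cutoff supported in $\mathbf{B}_{2R}$, view the resulting equation as an inhomogeneous linear parabolic equation, and use $L^q$ parabolic smoothing together with estimate \eqref{equ:estimateH}, which dominates $|e^{-ps/(p-1)}h(e^{s/(p-1)}w)|$ by $Cs^{-a}(|w|^{p}+1)$. Because the perturbation behaves like the main nonlinearity times a small factor $s^{-a}$, the iteration is essentially the one of the unperturbed case once $s$ is large enough, and the subcriticality condition $(n-2)p<n+2$ lets the iteration close in finitely many steps, producing
\begin{equation*}
\sup_{s\geq s'}\int_s^{s+1}\|w(\tau)\|_{L^{p+1}(\mathbf{B}_R)}^{(p+1)q} d\tau \leq C_{q,R},\qquad \forall q\geq 2.
\end{equation*}

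With arbitrarily high local space-time integrability of $w$ in hand, the final conversion to an $L^\infty$ bound would be carried out via the nonlinear parabolic regularity statement Proposition \ref{prop:regpar}, which the remark after Theorem \ref{theo:blrate} advertises as one of the three main technical contributions. Heuristically, this result should say that a solution of \eqref{equ:divw1} which is $L^{q}$ on $\mathbf{B}_{2R}\times(s,s+1)$ for some sufficiently large $q$ is in fact $L^{\infty}$ on $\mathbf{B}_R\times(s+\tfrac12,s+1)$ with a quantitative estimate. Combining it with a time translation then covers the whole range $[s',+\infty)$ and yields \eqref{equ:boundw1}.

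The hard part will not be the bootstrap, which is fairly mechanical, but the parabolic regularity step Proposition \ref{prop:regpar}. Standard $L^p$-to-$L^\infty$ theory does not apply off the shelf, because the equation carries the similarity-variable drift $-\tfrac{y}{2}\cdot\nabla$, the zero-order term $-w/(p-1)$, and, crucially, the perturbation $h$ whose only available control is the weak bound \eqref{equ:estimateH}. A Moser-type or De Giorgi-type iteration, adapted so that the $s^{-a}$ smallness of the perturbative contribution is absorbed into the main nonlinearity at each truncation level, is the natural route, and this is where the genuinely new analytic work lies.
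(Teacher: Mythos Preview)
Your overall architecture matches the paper's: start from the $q=2$ instance of the key integral bound using Proposition \ref{prop:boundEpsi}, bootstrap to all $q\geq 2$, then feed the result into Proposition \ref{prop:regpar} to obtain the local $L^\infty$ bound. Where your sketch diverges from the paper is in the mechanics of the bootstrap and in where you locate the main difficulty.

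The paper does not run the bootstrap by ``viewing the cutoff equation as an inhomogeneous linear parabolic equation and using $L^q$ smoothing.'' Instead it introduces a \emph{localized energy} $\mathcal{E}_\psi$ (equation \eqref{equ:localE}) and proves two-sided bounds on it (Proposition \ref{prop:upElc}); from these it extracts the pointwise-in-$s$ inequalities $\|w\|_{L^{p+1}_\rho(\mathbf{B}_R)}^{p+1}\lesssim 1+\|w\|_{H^1_\rho(\mathbf{B}_{2R})}^2$ and $\|w\|_{H^1_\rho(\mathbf{B}_R)}^2\lesssim 1+\|\psi^2 ww_s\|_{L^1_\rho}$ (Lemmas \ref{rema:boundLpW12} and \ref{lemm:tmpq2}). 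The loop is then closed by H\"older, the Cazenave--Lions interpolation Lemma \ref{lemm:intpola} (which you do not mention but which is used twice, once inside the bootstrap and once in the final application of Proposition \ref{prop:regpar}), and $L^p$--$L^q$ heat estimates on $w_s$. The gain per step is $\tilde q\leq q+\tfrac{2}{p+1}$. So the bootstrap is energy-based and leans heavily on the dissipation estimate for $w_s$ coming from the Lyapunov functional; this is less ``mechanical'' than you suggest, and the local energy bounds (Proposition \ref{prop:upElc}) take real work, including a separate blow-up/contradiction argument for the lower bound.

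Conversely, you overestimate the difficulty of Proposition \ref{prop:regpar}. In the paper it is a De~Giorgi-type interior estimate in the style of \cite{LSU68}, and the perturbation causes no new trouble there: once you know $|F|\leq C(|w|^{p-1}+1)(|w|+1)$ and have enough space-time integrability of $|w|^{p-1}$ from the bootstrap, the $s^{-a}$ factor is irrelevant and no special adaptation of the truncation levels is needed. The genuinely new analytic input for the perturbed problem sits earlier, in the Lyapunov functional and in the local energy bounds, not in the final regularity step.
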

\noindent Let us show that Theorem \ref{theo:blrate} follows from Theorem \ref{theo:blrate1}.
\begin{proof}[\textbf{Proof of Theorem \ref{theo:blrate} admitting Theorem \ref{theo:blrate1}}] We have from \eqref{equ:boundw1} that 
$$|w_{x_0,T}(0,s)| \leq C, \quad \forall s \geq s',$$
with $C$ independent on $x_0 \in \mathbb{R}^n$. Therefore, we get from \eqref{equ:simivariables} that 
$$|u(x_0,t)| \leq C(T - t)^{-\frac{1}{p-1}}, \quad \forall x_0 \in \mathbb{R}^n, \forall t \in[T - e^{-s'}, T),$$
which is the conclusion of Theorem \ref{theo:blrate}, assuming that Theorem \ref{theo:blrate1} holds.
\end{proof}
\noindent Following the method in \cite{GMSiumj04}, the proof of Theorem \ref{theo:blrate1} requests the following key integral estimate: 
\begin{lemm}[\textbf{Key integral estimate}] \label{lemm:keyint}
Let $a, p, n, M$ be fixed and $w$ be solution of equation \eqref{equ:divw1} satisfying \eqref{equ:proW}. For all $q \geq 2$ and $R > 0$, there exists $\hat{s}_2 \geq \tilde{s}_3$ and a positive constant $K_q$ such that, 
\begin{equation}\label{equ:keyint}
\int_s^{s+1} \|w(\tau)\|_{L^{p+1}(\mathbf{B}_R)}^{q(p+1)} d\tau \leq K_q ,\quad \forall s \geq \hat{s}_2,
\end{equation}
where $K_q$ depends only on $J_0, Q_0, a, n, p, q, R, \hat{s}_2$.
\end{lemm}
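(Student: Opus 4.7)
The plan is to bootstrap from the base case $q=2$, which is essentially free, to arbitrary $q \ge 2$, following the scheme of \cite{GMSiumj04} (itself inspired by \cite{Qu99amuc}). For $q=2$ and a fixed radius $R$, one has the pointwise bound $\rho(y) \ge c_R := (4\pi)^{-n/2}e^{-R^2/4}$ on $\mathbf{B}_R$, so
$$\int_s^{s+1}\|w(\tau)\|_{L^{p+1}(\mathbf{B}_R)}^{2(p+1)}\,d\tau \le c_R^{-2}\int_s^{s+1}\|w(\tau)\|_{L^{p+1}_\rho(\mathbb{R}^n)}^{2(p+1)}\,d\tau \le c_R^{-2} J_4,$$
by Proposition~\ref{prop:boundEpsi}(v), and \eqref{equ:keyint} holds with $K_2 = c_R^{-2}J_4$.

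For the inductive step, I would introduce a decreasing sequence of radii $R_0 > R_1 > \cdots > R$ and cutoffs $\chi_k \in C_c^\infty(\mathbf{B}_{R_k})$ with $\chi_k \equiv 1$ on $\mathbf{B}_{R_{k+1}}$, together with exponents $q_0 = 2 < q_1 < \cdots$. Assuming \eqref{equ:keyint} for $q_k$ on $\mathbf{B}_{R_k}$, I multiply \eqref{equ:divw1} by $|w|^{\beta_k - 1}w\,\chi_k^2\rho$, where $\beta_k$ is chosen so that the main nonlinearity produces $|w|^{\beta_k + p}$, and integrate by parts in $y$. The outcome is an energy identity of the schematic form
$$\frac{d}{d\tau}\!\int\frac{|w|^{\beta_k+1}}{\beta_k+1}\chi_k^2\rho\,dy + c_{\beta_k}\!\int\bigl|\nabla\bigl(|w|^{(\beta_k+1)/2}\chi_k\bigr)\bigr|^2\rho\,dy \le C\!\int|w|^{\beta_k+p}\chi_k^2\rho\,dy + \text{l.o.t.},$$
where the lower-order terms collect cutoff remainders supported on $\mathbf{B}_{R_k} \setminus \mathbf{B}_{R_{k+1}}$ together with perturbation contributions which, by \eqref{equ:estimateH}, are bounded by $C_0 s^{-a}$ times quantities of the same shape and can be absorbed on the left once $s$ is large enough.

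Integrating in $\tau$ over $[s, s+1]$ and applying a weighted Gagliardo--Nirenberg interpolation to $v := |w|^{(\beta_k+1)/2}\chi_k$ -- whose sub-critical range is guaranteed by \eqref{equ:rangep} -- I absorb the right-hand side into the gradient term on the left, at the cost of an additional factor controlled by the inductive bound on $\mathbf{B}_{R_k}$ combined with $(iv)$ and $(vi)$ of Proposition~\ref{prop:boundEpsi}. This yields \eqref{equ:keyint} on $\mathbf{B}_{R_{k+1}}$ at a strictly larger exponent $q_{k+1} \ge q_k + \delta$, for some $\delta = \delta(n,p) > 0$ independent of $k$. Iterating finitely many times reaches any prescribed $q \ge 2$; the final ball can be taken as $\mathbf{B}_R$ by choosing the starting radius $R_0$ sufficiently large and arranging the chain so that $R_k \searrow R$.

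The main obstacle will be the careful bookkeeping in the inductive step: (i) verifying that the factor $s^{-a}$ in \eqref{equ:estimateH} makes the perturbation contributions absorbable uniformly in $k$, so that a single threshold $\hat{s}_2$ serves every exponent up to the prescribed $q$; (ii) controlling the cutoff cross-terms generated by $\nabla \chi_k$, which couples the induction to $(iv)$ and $(vi)$ of Proposition~\ref{prop:boundEpsi}; and (iii) ensuring that the Sobolev exponent appearing in the Gagliardo--Nirenberg step stays strictly below $2^\ast = 2n/(n-2)$ at each iterate, which is precisely the role of the sub-criticality hypothesis \eqref{equ:rangep}.
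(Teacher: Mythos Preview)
Your base case $q=2$ is correct and matches the paper. The inductive step, however, is \emph{not} the argument of \cite{GMSiumj04} as carried out here, and the sketch you give has a real gap.

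The paper does not multiply \eqref{equ:divw1} by $|w|^{\beta-1}w$ and run a Moser--Gagliardo--Nirenberg iteration. Instead it works through the local energy functional $\mathcal{E}_\psi$ (Proposition~\ref{prop:upElc}) and the two pointwise-in-$s$ relations
\[
\|w(s)\|_{L^{p+1}_\rho(\mathbf{B}_R)}^{p+1}\lesssim 1+\|w(s)\|_{H^1_\rho(\mathbf{B}_{2R})}^2,
\qquad
\|w(s)\|_{H^1_\rho(\mathbf{B}_R)}^2\lesssim 1+\|\psi^2 w w_s(s)\|_{L^1_\rho(\mathbf{B}_{2R})}
\]
(Lemmas~\ref{rema:boundLpW12} and~\ref{lemm:tmpq2}). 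The bootstrap is then on $\int_s^{s+1}\|w\|_{H^1_\rho}^{2q}$: one raises the second inequality to the power $\tilde q$, splits $\|\psi^2 ww_s\|_{L^1_\rho}\le\|\psi w\|_{L^\lambda_\rho}\|\psi w_s\|_{L^{\lambda'}_\rho}$ by H\"older, controls $\|w\|_{L^\lambda}$ via the Cazenave--Lions interpolation (Lemma~\ref{lemm:intpola}) applied to the induction hypothesis together with $(i)$ of Proposition~\ref{prop:boundEpsi}, and---crucially---controls $\|\psi w_s\|_{L^{p_1}_\rho}$ with $p_1=(p+1)/p$ by the $L^p$--$L^q$ smoothing of the heat semigroup (Lemmas~6.3--6.4 of \cite{GMSiumj04}), which feeds back $\|w\|_{L^{p+1}_\rho}^p$. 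This closes the loop with a fixed increment $\tilde q\in[q,q+2/(p+1)]$.

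Your scheme avoids $w_s$ entirely, and that is where it breaks. After integrating your energy identity over $[s,s+1]$ the time-derivative produces the boundary term $\int|w(s)|^{\beta_k+1}\chi_k^2\rho\,dy$, which is a \emph{pointwise-in-$s$} quantity of order higher than anything the induction hypothesis (a time-integrated bound) or $(iv)$, $(vi)$ of Proposition~\ref{prop:boundEpsi} control. To handle it you would in any case need something like Lemma~\ref{lemm:intpola} to convert time-integrated control into a sup bound, and then the exponent bookkeeping forces $\beta_k+1$ below the Cazenave--Lions threshold $\lambda_1(q_k)=p+1-(p-1)/(q_k+1)$; but then the nonlinear term $\int|w|^{\beta_k+p}$ sits at exponent $<2p-(p-1)/(q_k+1)$ and it is not clear your Gagliardo--Nirenberg step yields an increment $\delta>0$ independent of $k$. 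The paper's route via $w_s$ and the semigroup estimate sidesteps this difficulty entirely because Lemma~\ref{lemm:tmpq2} is already pointwise in time.
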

\noindent Let us first show that how Theorem \ref{theo:blrate1} follows from Lemma \ref{lemm:keyint}, then we will prove it later. In order to derive uniform bound in Theorem \ref{theo:blrate1} for all $p$ satisfying \eqref{equ:rangep}, we need two following techniques. The first one is an interpolation result from Cazenave and Lions \cite{CLcpde84}:
\begin{lemm}[\textbf{Interpolation technique, Cazenave and Lions \cite{CLcpde84}}] \label{lemm:intpola}  Assume that 
$$v \in L^\alpha\left( (0,\infty); L^\beta(\mathbf{B}_R) \right), \; v_t \in L^\gamma\left( (0,\infty); L^\delta(\mathbf{B}_R) \right)$$
for some $1 < \alpha, \beta, \gamma, \delta < \infty$. Then 
$$v \in \mathcal{C}\left([0,\infty); L^\lambda(\mathbf{B}_R) \right)$$
for all $\lambda  < \lambda_0 = \frac{(\alpha + \gamma')\beta\delta}{\gamma'\beta + \alpha\delta}$ with  $\gamma' = \frac{\gamma}{\gamma - 1}$, and satisfies
$$\sup_{t \geq 0} \|v(t) \| _{L^\lambda(\mathbf{B}_R)} \leq C \int_0^\infty \left(\|v(\tau)\|_{L^\beta(\mathbf{B}_R)}^\alpha + \|v_\tau(\tau)\|_{L^\delta(\mathbf{B}_R)}^\gamma \right)d\tau$$
for $\lambda < \lambda_0$. The positive constant $C$ depends only on $\alpha, \beta, \gamma, \delta , n$ and $R$.
\end{lemm}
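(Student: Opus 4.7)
The approach is to establish the bound via a time-averaging decomposition combined with Hölder interpolation on the bounded ball $\mathbf{B}_R$. The starting observation is that $v_t \in L^\gamma((0,\infty); L^\delta(\mathbf{B}_R))$ with $\gamma > 1$ automatically yields a Hölder-type time modulus of continuity in the $L^\delta(\mathbf{B}_R)$ topology: interpreting $v$ as a Bochner function with values in $L^\delta(\mathbf{B}_R)$ and writing $v(t_2) - v(t_1) = \int_{t_1}^{t_2} v_s(s)\,ds$, Hölder's inequality in time gives
$$
\|v(t_2) - v(t_1)\|_{L^\delta(\mathbf{B}_R)} \leq |t_2 - t_1|^{1/\gamma'}\left(\int_{t_1}^{t_2}\|v_s\|_{L^\delta(\mathbf{B}_R)}^\gamma\,ds\right)^{1/\gamma},
$$
so in particular $v \in \mathcal{C}([0,\infty); L^\delta(\mathbf{B}_R))$.

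For a fixed $t \geq 0$ and $h \in (0,1)$ to be chosen later, I would introduce the time-averaged approximant $\bar v_h(t) = h^{-1}\int_t^{t+h} v(\tau)\,d\tau$ and decompose $v(t) = \bar v_h(t) + (v(t) - \bar v_h(t))$. Minkowski's integral inequality together with Hölder in time yield $\|\bar v_h(t)\|_{L^\beta(\mathbf{B}_R)} \leq h^{-1/\alpha}\|v\|_{L^\alpha((t,t+h);L^\beta(\mathbf{B}_R))}$. On the other hand, $v(t) - \bar v_h(t) = -h^{-1}\int_t^{t+h}\int_t^\tau v_s(s)\,ds\,d\tau$, and the previous Hölder argument applied to each inner integral gives $\|v(t) - \bar v_h(t)\|_{L^\delta(\mathbf{B}_R)} \leq h^{1/\gamma'}\|v_t\|_{L^\gamma((t,t+h);L^\delta(\mathbf{B}_R))}$. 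Thus the averaged part is controlled in $L^\beta(\mathbf{B}_R)$ by the first norm on the right-hand side of the lemma, and the remainder is controlled in $L^\delta(\mathbf{B}_R)$ by the second.

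To pass to the $L^\lambda(\mathbf{B}_R)$ norm, I would exploit the boundedness of $\mathbf{B}_R$: for $\lambda$ below both $\beta$ and $\delta$ the embedding $L^p(\mathbf{B}_R) \hookrightarrow L^\lambda(\mathbf{B}_R)$ is direct, while more generally the three-exponent Hölder interpolation $\|f\|_{L^\lambda} \leq \|f\|_{L^\beta}^{\theta}\|f\|_{L^\delta}^{1-\theta}$ is available when $\lambda^{-1} = \theta\beta^{-1} + (1-\theta)\delta^{-1}$ with $\theta \in [0,1]$. Applying such estimates to each piece of the decomposition produces a two-term bound of the form $C_R(h^{-1/\alpha}\|v\|_{L^\alpha L^\beta}^{a_1} + h^{1/\gamma'}\|v_t\|_{L^\gamma L^\delta}^{a_2})$ with appropriate exponents $a_1, a_2 \in (0,1]$. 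Minimizing over $h$ balances the two terms and yields a multiplicative bound $\|v(t)\|_{L^\lambda(\mathbf{B}_R)} \leq C \|v\|_{L^\alpha L^\beta}^{\theta_1}\|v_t\|_{L^\gamma L^\delta}^{\theta_2}$; a final application of Young's inequality converts this multiplicative estimate into the additive form stated in the lemma, provided that $\theta_1$ and $\theta_2$ line up with the exponents $\alpha$ and $\gamma$ appearing on the right-hand side. Continuity in $L^\lambda(\mathbf{B}_R)$ follows by running the same chain of estimates on $v(t_2) - v(t_1)$ in place of $v(t)$.

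The delicate step is the numerical bookkeeping: one must track the exponents through the optimization in $h$ and check that they assemble into precisely $\lambda_0 = \frac{(\alpha + \gamma')\beta\delta}{\gamma'\beta + \alpha\delta}$, equivalently $\lambda_0^{-1} = \frac{\alpha}{\alpha+\gamma'}\beta^{-1} + \frac{\gamma'}{\alpha+\gamma'}\delta^{-1}$, so that $\theta = \alpha/(\alpha+\gamma')$ is the correct interpolation parameter between $L^\beta$ and $L^\delta$. This is exactly the real-interpolation identity for the pair $(L^\alpha(\mathbb{R}_+; L^\beta), W^{1,\gamma}(\mathbb{R}_+; L^\delta))$, and once the exponents are matched the rest reduces to Minkowski, Hölder and Young — all routine.
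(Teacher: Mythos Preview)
The paper does not supply its own proof of this lemma; it is simply quoted from Cazenave and Lions \cite{CLcpde84}, so there is no in-paper argument to compare against. Your time-averaging decomposition is in fact the standard route (and essentially the one in \cite{CLcpde84}): the split $v(t)=\bar v_h(t)+(v(t)-\bar v_h(t))$ with $\|\bar v_h(t)\|_{L^\beta}\le h^{-1/\alpha}\|v\|_{L^\alpha L^\beta}$ and $\|v(t)-\bar v_h(t)\|_{L^\delta}\le h^{1/\gamma'}\|v_t\|_{L^\gamma L^\delta}$ is exactly a $K$-functional bound for the couple $(L^\beta(\mathbf{B}_R),L^\delta(\mathbf{B}_R))$, and optimising in $h$ places $v(t)$ in the Lorentz space $L^{\lambda_0,\infty}(\mathbf{B}_R)$, which on a bounded ball embeds into $L^\lambda(\mathbf{B}_R)$ for every $\lambda<\lambda_0$. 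Your identification $\lambda_0^{-1}=\tfrac{\alpha}{\alpha+\gamma'}\beta^{-1}+\tfrac{\gamma'}{\alpha+\gamma'}\delta^{-1}$ is the correct interpolation exponent.

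One point to tighten: the sentence ``applying such estimates to each piece of the decomposition'' and the three-exponent H\"older inequality $\|f\|_{L^\lambda}\le\|f\|_{L^\beta}^\theta\|f\|_{L^\delta}^{1-\theta}$ cannot be used piecewise, since $\bar v_h$ is controlled only in $L^\beta$ and the remainder only in $L^\delta$; when $\lambda>\min(\beta,\delta)$ you cannot pass either piece separately into $L^\lambda$ by embedding. The right reading is the one you give at the end --- the two-piece bound is a $K$-functional estimate, and the conclusion comes from real interpolation (equivalently, from the Lorentz embedding on $\mathbf{B}_R$), not from H\"older applied termwise. With that clarification the argument is complete.
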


\noindent The second one is an interior regularity result for a nonlinear parabolic equation:
\begin{prop}[\textbf{Interior regularity}] \label{prop:regpar} Let $v(x,t)\in L^\infty\big((0,+\infty), L^2(\mathbf{B}_R)\big) \cap L^2\big((0,+\infty),H^1(\mathbf{B}_R)\big)$ which satisfies  
\begin{equation}\label{equ:vinger}
v_t - \Delta v + b. \nabla v = F,\quad (x,t) \in Q_R = \mathbf{B}_R \times (0, +\infty),
\end{equation}
where $R > 0$, $|b(x,t)| \leq \mu_1$ in $Q_R$  and $|F(x,t,v)| \leq g(x,t)(|v| + 1)$ with
\begin{equation}\label{equ:conLemint}
\int_{t}^{t +1} \left\|g(\tau)\right\|^{\beta'}_{L^{\alpha'}(\mathbf{B}_R)}d\tau  \leq \mu_2, \quad \forall t \in(0, +\infty),
\end{equation}
and $\frac{1}{\beta'} + \frac{n}{2\alpha'} < 1$ and $\alpha' \geq 1$. If 
\begin{equation}\label{eq:conLeminreg1}
\int_{t}^{t +1} \|v(\tau)\|^2_{L^2(\mathbf{B}_R)} d\tau \leq \mu_3,\quad \forall t \in(0, +\infty),
\end{equation}
and $\mu_1$, $\mu_2$ and $\mu_3$ are uniformly bounded in $t$, then there exists a positive constant $C$ depending only on $\mu_1$, $\mu_2$, $\mu_3$, $\alpha'$, $\beta'$, $n$, $R$ and $\tau \in (0,1)$ such that
$$|v(x,t)| \leq C,\quad \forall(x,t) \in \mathbf{B}_{R/4} \times (\tau, +\infty).$$
\end{prop}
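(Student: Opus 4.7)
The plan is to prove Proposition \ref{prop:regpar} by a parabolic Moser iteration on a shrinking sequence of subcylinders of $Q_R$, seeded by the $L^2$ assumption \eqref{eq:conLeminreg1} and driven by the Aronson--Serrin scaling condition $\frac{1}{\beta'} + \frac{n}{2\alpha'} < 1$. I would fix geometric sequences of radii $R_k \downarrow R/4$ and times $\tau_k \uparrow \tau$, together with smooth cut-offs $\eta_k(x,t)$ supported in $\mathbf{B}_{R_k}\times(\tau_k/2, +\infty)$ and equal to $1$ on the smaller cylinder $Q_{k+1}^{*} := \mathbf{B}_{R_{k+1}}\times(\tau_{k+1}, +\infty)$, with $|\nabla \eta_k|+|\partial_t \eta_k| \leq C 2^k$.

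For an exponent $q \geq 2$, I would test equation \eqref{equ:vinger} against $|v|^{q-2}v\,\eta_k^2$ and integrate by parts over $\mathbf{B}_R \times (0,s)$. The Laplacian produces the coercive quantity $c_q \int |\nabla(|v|^{q/2}\eta_k)|^2$ up to cut-off error; the drift $b\cdot\nabla v$ is absorbed via $|b|\leq \mu_1$ and Young's inequality; the source is controlled by $|F||v|^{q-1}\eta_k^2 \leq g(|v|^q + 1)\eta_k^2$ using $|F|\leq g(|v|+1)$. After the usual cut-off bookkeeping this yields an energy inequality of the schematic form
\begin{equation*}
\sup_{s}\int |v(\cdot,s)|^q \eta_k^2(\cdot,s)\,dx + \iint |\nabla(|v|^{q/2}\eta_k)|^2 \leq C q\, 4^k \iint_{Q_k}\left(|v|^q + g|v|^q + g\right),
\end{equation*}
where $Q_k$ denotes the support of $\eta_k$.

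The heart of the iteration is the parabolic Sobolev embedding $L^\infty_t L^2_x \cap L^2_t H^1_x \hookrightarrow L^{2\kappa}_{t,x}$ with $\kappa = (n+2)/n$, applied to $|v|^{q/2}\eta_k$; this promotes the left-hand side to $\|v\|_{L^{q\kappa}(Q_{k+1}^{*})}^q$. The critical term $\iint g|v|^q$ is estimated by H\"older in space-time: the strict inequality $\frac{1}{\beta'}+\frac{n}{2\alpha'}<1$ allows me to place $g$ in $L^{\beta'}_t L^{\alpha'}_x$ (bounded by $\mu_2$ uniformly in $t$ via \eqref{equ:conLemint}) while $|v|^q$ enters in an $L^r_{t,x}$ norm with $r$ \emph{strictly} smaller than $\kappa$, leaving room to interpolate with $\|v\|_{L^q(Q_k)}^q$ and absorb the outcome. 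This produces a recursion
\begin{equation*}
\|v\|_{L^{q\kappa}(Q_{k+1}^{*})} \leq (Cq)^{1/q} 4^{k/q}\left(\|v\|_{L^q(Q_k)} + 1\right).
\end{equation*}

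Iterating along $q_k = 2\kappa^k$ starting from $q_0 = 2$ (where the seed bound is supplied by \eqref{eq:conLeminreg1}), the summability of $\sum k/q_k$ and $\sum (\log q_k)/q_k$ makes the infinite product of constants finite and delivers
\begin{equation*}
\|v\|_{L^\infty(\mathbf{B}_{R/4}\times(\tau,+\infty))} \leq C(\mu_1,\mu_2,\mu_3,\alpha',\beta',n,R,\tau),
\end{equation*}
which is the claim. The main obstacle I anticipate is the H\"older step for $\iint g|v|^q$: it is exactly the strict Aronson--Serrin assumption that buys a genuine gain $\kappa>1$ uniformly in $q$, so any weakening of this scaling would destroy the iteration. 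The remaining ingredients are standard cut-off parabolic bookkeeping.
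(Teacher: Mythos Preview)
Your Moser iteration is a valid route to the same conclusion, but it is not the one the paper takes. The paper follows the De~Giorgi--Lady\v{z}enskaja scheme: it fixes $t_0$, works on the finite cylinder $\mathbf{B}_{R/2}\times(t_0-\tau,t_0)$, tests \eqref{equ:vinger} against the truncation $v_k\varphi^2$ with $v_k=(v-k)_+$, and uses H\"older in $(x,t)$ together with an interpolation identity from \cite{LSU68} to reach the level-set energy inequality \eqref{equ:tempIdreg}. At that point it simply invokes Theorem~6.2 of \cite{LSU68}, which packages the De~Giorgi iteration and directly outputs the $L^\infty$ bound \eqref{eq:rereg}, uniform in $t_0$. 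So the paper's proof is short because the heavy lifting is outsourced to \cite{LSU68}; your argument is self-contained and more transparent about where the Aronson--Serrin gap $\frac{1}{\beta'}+\frac{n}{2\alpha'}<1$ is spent, at the cost of writing out the full iteration.

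One point to tighten in your write-up: as you describe it, the cut-offs $\eta_k$ are supported on $\mathbf{B}_{R_k}\times(\tau_k/2,+\infty)$, so the right-hand side $\iint_{Q_k}(|v|^q+g|v|^q+g)$ is an integral over an infinite time interval, while the hypotheses \eqref{equ:conLemint} and \eqref{eq:conLeminreg1} only control unit time windows. The iteration as written does not close. The fix is exactly what the paper does implicitly: run the whole Moser scheme on a cylinder $\mathbf{B}_R\times(t_0-\tau,t_0)$ of fixed height, with cut-offs vanishing at $t=t_0-\tau$, obtain a bound at $(x,t_0)$ depending only on $\mu_1,\mu_2,\mu_3,\alpha',\beta',n,R,\tau$, and then observe the bound is uniform in $t_0>\tau$. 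With that localization your argument goes through.
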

\begin{proof} Since the argument of the proof is analogous as in the corresponding part in \cite{LSU68}, we then leave the proof to Appendix \ref{ap:proRepa}.
\end{proof}
Let us now use Lemma \ref{lemm:keyint} to derive the conclusion of Theorem \ref{theo:blrate1}, then we will prove it later.
\begin{proof}[\textbf{Proof of Theorem \ref{theo:blrate1} admitting Lemma \ref{lemm:keyint}}] Let us recall the equation in $w$: 
\begin{equation*}
w_s - \Delta w + \frac{1}{2}y. \nabla w = -\frac{w}{p-1} + |w|^{p-1}w + e^{-\frac{ps}{p-1}}h\left(e^\frac{s}{p-1}w\right),
\end{equation*}
where $h$ is given in \eqref{equ:h}.\\
We now apply Proposition \ref{prop:regpar} to $w$ with $b = \frac{y}{2}$ and 
$$F = -\frac{w}{p-1} + |w|^{p-1}w + e^{-\frac{ps}{p-1}}h\left(e^\frac{s}{p-1}w\right).$$
From \eqref{equ:estimateH}, we see that
$$|F| \leq C'(C_0,p)(|w|^{p-1} + 1)(|w| + 1) , \quad \forall s \geq s_0.$$
Thus, the first identity in \eqref{equ:conLemint} holds with $g = C'(|w|^{p-1} + 1)$ and the second condition in \eqref{equ:conLemint} turns into 
$$\int_s^{s+1} \left(\int_{\mathbf{B}_R} |w(y,\tau)|^{\alpha'(p-1)}dy\right)^{\frac{\beta'}{\alpha'}}d\tau \leq C_1 \quad \text{for some $C_1 > 0$,}$$
for some $\alpha'$ and $\beta'$ satisfying $\frac{1}{\beta'} + \frac{n}{2\alpha'} < 1$.\\
For this bound, we first use $(i)$ of Proposition \ref{prop:boundEpsi}, \eqref{equ:keyint} and apply Lemma \ref{lemm:intpola} with $\alpha = q(p+1)$, $\beta = p+1$, $\gamma = \delta = \gamma' = 2$ to get that
\begin{equation}\label{equ:tmp2key}
\sup_{s \geq\hat{s}_2} \|w(s)\|_{L^\lambda(\mathbf{B}_R)} \leq C_2(R,K_q), \quad  \forall \lambda < \lambda_1 = p+1 - \frac{p-1}{q + 1}.
\end{equation}
Next, applying Proposition \ref{prop:regpar} with $\alpha'(p-1) = \lambda$, $\beta'$ and $q$ large (note that the condition $\frac{1}{\beta'} + \frac{n}{2\alpha'} < 1$ turns into $p  < \frac{n + 2}{n - 2}$), we obtain
$$\int_s^{s+1} \left(\int_{\mathbf{B}_R} |w(y,\tau)|^{\alpha'(p-1)}dy\right)^{\frac{\beta'}{\alpha'}}d\tau \leq C_2^{\beta'(p-1)}.$$
Hence, condition \eqref{equ:conLemint} holds. Therefore, $|w(y,s)|$ is bounded for all $(y,s) \in \mathbf{B}_{R/4} \times (\tau + \hat{s}_2, +\infty)$ for some $\tau \in (0,1)$, which concludes the proof of Theorem \ref{theo:blrate1}, assuming that Lemma \ref{lemm:keyint} holds.
\end{proof}

\begin{rema} If we use $(v)$ of Proposition \ref{prop:boundEpsi}, we already have for all $s \geq \tilde{s}_3$,
$$\int_s^{s+1}\left(\int_{\mathbf{B}_R} |w(y,\tau)|^{p+1}dy\right)^{2}d\tau \leq C(R)K_1.$$
Applying Proposition \ref{prop:regpar}  with $\alpha' = \frac{p+1}{p-1}$ and $\frac{\beta'}{\alpha'} = 2$ (noting that the condition $\frac{1}{\beta'} + \frac{n}{2\alpha'} < 1$ turns into $p < \frac{n+3}{n-1}$), we obtain $w$ is uniformly bounded with $ p \in\left(1, \frac{n+3}{n-1}\right)$.\\
\noindent If we use $(i)$ and $(v)$ in Proposition \ref{prop:boundEpsi}, Lemma \ref{lemm:intpola} with $\alpha = 2(p+1),\quad \beta = p+1, \quad \gamma = \delta = \gamma' = 2$, then we obtain 
$$\sup_{s \geq \tilde{s}_3} \|w(s)\|_{L^\lambda(\mathbf{B}_R)} \leq C(R), \quad \forall  \lambda < \lambda_1 = \frac{2(p+2)}{3}$$
Next, Proposition \ref{prop:regpar} applies with $\alpha'(p-1) = \lambda$ with $\lambda$ approaches to $\frac{2(p+1)}{3}$ and $\beta'$ very large, then the condition $\frac{1}{\beta'} + \frac{n}{2\alpha'} < 1$ now becomes 
$$\exists \lambda < \frac{2(p+1)}{3}, \quad \text{such that} \quad \frac{n}{2 \alpha'} < 1.$$
This turns into $p < \frac{3n + 8}{3n - 4}$. This result was proved by Giga and Kohn in \cite{GKiumj87}.\\
\noindent Relying on a bootstrap argument, \cite{GMSiumj04} improved the input estimate of Proposition \ref{prop:regpar} covering this way the whole subcritical range $p < \frac{n+2}{n-2}$. Here, we extend their approach to a larger class of equation.
\end{rema}

\vspace*{0.5cm}
Let us now give the proof of Lemma \ref{lemm:keyint} in order to complete the proof of Theorem \ref{theo:blrate1} and Theorem \ref{theo:blrate} also. To this end, let $\psi \in \mathcal{C}^2(\mathbb{R}^n)$ be a bounded function, we introduce the following local functional, which is a perturbed version of the function of \cite{GMSiumj04},
\begin{align}
\mathcal{E}_\psi[w](s) & = \frac{1}{2} \int_{\mathbb{R}^n}\psi^2 \left(|\nabla w|^2 + \frac{1}{p-1}|w|^2 \right)\rho dy \nonumber\\
&\quad  -\frac{1}{p+1}\int_{\mathbb{R}^n}\psi^2 |w|^{p+1}\rho dy - e^{-\frac{p+1}{p-1}s} \int_{\mathbb{R}^n}\psi^2 H\left(e^{\frac{s}{p-1}}w \right) \rho dy. \label{equ:localE}
\end{align}
We get the following bound on the local functional $\mathcal{E}_\psi$:
\begin{prop} \label{prop:upElc} Let $a, p, n, M$ be fixed and $w$ be solution of equation \eqref{equ:divw1} satisfying \eqref{equ:proW}. For $\psi \in \mathcal{C}^2(\mathbb{R}^n)$ bounded, there exist positive constants $Q', K'$ such that 
\begin{equation}\label{equ:upElc}
-Q' \leq \mathcal{E}_\psi[w](s) \leq K', \quad \forall s \geq \tilde{s}_3,
\end{equation}
where $\tilde{s}_3$ is given in Proposition \ref{prop:boundEpsi} and 
$Q', K'$ depend on $a$, $p$, $n$, $M$, $\|\psi\|^2_{L^\infty}$, $\|\nabla \psi\|^2_{L^\infty}$ and $J_0$.
\end{prop}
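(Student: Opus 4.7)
The plan is to proceed in the same spirit as Lemma \ref{lemm:lya}: I would derive a differential identity for $\mathcal{E}_\psi[w](s)$ by multiplying equation \eqref{equ:divw1} by $\psi^2 w_s\rho$ and integrating over $\mathbb{R}^n$. The three non-dissipative terms transform exactly as in the proof of Lemma \ref{lemm:lya} (up to the overall factor $\psi^2$), while the dissipative term $\frac{1}{\rho}\mathrm{div}(\rho\nabla w)$ now produces, after integration by parts, an additional \emph{localization error} $-2\int\psi(\nabla\psi\cdot\nabla w)w_s\rho\, dy$. Collecting all contributions and using \eqref{equ:estimateH} to control the $h$-terms exactly as for \eqref{equ:estDE3}, I expect the identity
\begin{equation*}
\frac{d}{ds}\mathcal{E}_\psi[w](s) = -\int_{\mathbb{R}^n}\psi^2 w_s^2\rho\, dy - 2\int_{\mathbb{R}^n}\psi(\nabla\psi\cdot\nabla w)w_s\rho\, dy + \mathcal{H}_\psi(s),
\end{equation*}
with $|\mathcal{H}_\psi(s)|\leq C\|\psi\|_\infty^2\, s^{-a}(\|w(s)\|_{L^{p+1}_\rho}^{p+1}+1)$.

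Applying Cauchy--Schwarz to the localization error in the form $|2\int\psi(\nabla\psi\cdot\nabla w)w_s\rho\, dy|\leq \int\psi^2 w_s^2\rho\, dy+\|\nabla\psi\|_\infty^2\int|\nabla w|^2\rho\, dy$, I then obtain the two-sided pointwise control
\begin{equation*}
\left|\frac{d}{ds}\mathcal{E}_\psi[w](s)\right| \leq 2\int_{\mathbb{R}^n}\psi^2 w_s^2\rho\, dy + C(\|\psi\|_\infty,\|\nabla\psi\|_\infty)\left(\|w\|^2_{H^1_\rho}+s^{-a}(\|w\|^{p+1}_{L^{p+1}_\rho}+1)\right).
\end{equation*}
Integrating this inequality over any window $[s,s+1]$ with $s\geq\tilde{s}_3$, and invoking $(i)$ and $(vi)$ of Proposition \ref{prop:boundEpsi} together with the Cauchy--Schwarz consequence $\int_s^{s+1}\|w\|^{p+1}_{L^{p+1}_\rho}d\tau\leq\sqrt{J_4}$ of $(v)$, all the integrals on the right become uniformly bounded in $s$. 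This yields a uniform oscillation estimate $\sup_{\tau_1,\tau_2\in[s,s+1]}|\mathcal{E}_\psi(\tau_1)-\mathcal{E}_\psi(\tau_2)|\leq C_1$.

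To upgrade this into the pointwise bound \eqref{equ:upElc}, I would estimate the time average $\overline{\mathcal{E}_\psi}(s):=\int_s^{s+1}\mathcal{E}_\psi(\tau)\, d\tau$ directly from \eqref{equ:localE}: the quadratic part contributes at most $\frac{1}{2}\|\psi\|_\infty^2 J_5$ by $(vi)$, the $|w|^{p+1}$ term is controlled by $\frac{1}{p+1}\|\psi\|_\infty^2\sqrt{J_4}$ via $(v)$, and the $H$-term by the same using \eqref{equ:estimateH}, so $|\overline{\mathcal{E}_\psi}(s)|\leq C_2$ uniformly in $s\geq\tilde{s}_3$. Choosing $\tau^\star\in[s,s+1]$ with $\mathcal{E}_\psi(\tau^\star)=\overline{\mathcal{E}_\psi}(s)$ by the mean value theorem and combining with the oscillation estimate yields $|\mathcal{E}_\psi(s)|\leq C_1+C_2$, giving the desired $Q'$ and $K'$. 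The main obstacle is precisely the localization error $-2\int\psi(\nabla\psi\cdot\nabla w)w_s\rho\, dy$, absent in the global case of Lemma \ref{lemm:lya}: it is the only term coupling $\nabla w$ to $w_s$, and the Cauchy--Schwarz treatment it requires consumes the natural dissipation $\int\psi^2 w_s^2\rho\, dy$ while leaving an $H^1_\rho$-contribution which, by Proposition \ref{prop:boundEpsi}, is available only in the time-averaged sense $(vi)$; this is why a pointwise bound on $\mathcal{E}_\psi$ cannot be read directly from the differential identity and must instead be obtained through the averaging argument above.
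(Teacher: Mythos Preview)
Your argument is correct, and for the upper bound it follows essentially the same averaging-plus-mean-value scheme as the paper. The genuine difference lies in the \emph{lower} bound. The paper does not treat the two bounds symmetrically: after establishing the one-sided estimate $\frac{d}{ds}\mathcal{E}_\psi \leq C(1+\|w_s\|_{L^2_\rho})$ and the upper bound as you do, it proves the lower bound by a blow-up contradiction. Multiplying \eqref{equ:divw1} by $\psi^2 w\rho$ yields a differential inequality $f'(s)\geq -g(s)+C_4 f(s)^{(p+1)/2}$ for $f(s)=\frac{1}{2}\int\psi^2|w|^2\rho\,dy$ and $g(s)=2\mathcal{E}_\psi+C_3$; if $g(s^*)\leq -M$ for $M$ large, the already-established control $g'\leq C_5+C_5\|w_s\|_{L^2_\rho}$ keeps $g$ negative long enough to force $f$ to blow up in finite time, contradicting global existence of $w$. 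This mirrors the blow-up criterion of Lemma~\ref{lemm:cri} at the local level.

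Your route is more elementary and more symmetric: by bounding $|\frac{d}{ds}\mathcal{E}_\psi|$ two-sidedly you get the lower and upper bounds in one stroke, using only the time-integrated estimates $(i)$, $(v)$, $(vi)$ of Proposition~\ref{prop:boundEpsi}. What the paper's argument buys is a dynamical interpretation (very negative local energy forces finite-time blow-up of the localized $L^2$ mass), which connects the local functional to the global blow-up criterion; what your argument buys is economy, since it avoids the second multiplication by $\psi^2 w\rho$ and the contradiction machinery entirely.
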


\begin{proof} The proof is essentially the same as the corresponding part in \cite{GMSiumj04}, except for the control of the last term in \eqref{equ:localE}. Since that control is a bit long and technical, we leave the proof to \ref{ap:upELc}.
\end{proof}
\vspace*{0.3cm}
\noindent Let $R > 0$, we fix $\psi(y)$ so that it satisfies 
\begin{equation} \label{equ:psiy}
\psi(y) \in \mathcal{C}_0^\infty(\mathbb{R}^n), \quad 0 \leq \psi(y) \leq 1, \quad \psi(y) = \left\{\begin{array}{lcl} 1 & \quad\text{on} &\quad \mathbf{B}_R \\
0 & \quad \text{on} & \quad \mathbb{R}^n\setminus \mathbf{B}_{2R} \end{array} \right.. 
\end{equation}
We claim the following:
\begin{lemm}\label{rema:boundLpW12} Let $a, p, n, M$ be fixed and $w$ be solution of equation \eqref{equ:divw1} satisfying \eqref{equ:proW}. Then there exists $\tilde{s}_5 \geq \tilde{s}_3$ such that 
\begin{equation}\label{equ:boundLp1}
\|w\|_{L^{p+1}_\rho(\mathbf{B}_R)}^{p+1} \leq K_1\left(1 + \|w \|^2_{H^1_\rho(\mathbf{B}_{2R})}\right),\quad \forall s \geq \tilde{s}_5,
\end{equation}
where $K_1 = K_1(a,p, n, M,Q')$.
\end{lemm}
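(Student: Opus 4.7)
The plan is to mimic the global argument used to prove $(iii)$ of Proposition \ref{prop:boundEpsi}, but with the cut-off functional $\mathcal{E}_\psi$ in place of $\mathcal{E}$, exploiting the two-sided bound in Proposition \ref{prop:upElc}. The cut-off $\psi$ defined in \eqref{equ:psiy} is perfectly designed for the statement: since $\psi \equiv 1$ on $\mathbf{B}_R$, the weighted integral $\int \psi^2 |w|^{p+1}\rho \,dy$ dominates $\|w\|_{L^{p+1}_\rho(\mathbf{B}_R)}^{p+1}$, while since $\operatorname{supp}\psi \subset \mathbf{B}_{2R}$ with $0\le\psi\le 1$, the gradient-plus-$L^2$ piece in $\mathcal{E}_\psi$ is controlled, up to a multiplicative constant depending only on $p$, by $\|w\|^2_{H^1_\rho(\mathbf{B}_{2R})}$.

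Concretely, I rewrite the identity
\begin{equation*}
\frac{2}{p+1}\int_{\mathbb{R}^n}\psi^2 |w|^{p+1}\rho\,dy
= \int_{\mathbb{R}^n}\psi^2\!\left(|\nabla w|^2 + \tfrac{1}{p-1}|w|^2\right)\rho\,dy
- 2\mathcal{E}_\psi[w](s)
- 2e^{-\frac{p+1}{p-1}s}\!\int_{\mathbb{R}^n}\psi^2 H\!\left(e^{\frac{s}{p-1}}w\right)\rho\,dy,
\end{equation*}
coming straight from the definition \eqref{equ:localE} of $\mathcal{E}_\psi$. The lower bound $\mathcal{E}_\psi[w](s) \ge -Q'$ from Proposition \ref{prop:upElc} gives $-2\mathcal{E}_\psi[w](s) \le 2Q'$, so the difficulty is the last integral, which I dominate using the pointwise estimate \eqref{equ:estimateHhe}, namely
$\bigl|e^{-\frac{p+1}{p-1}s}H(e^{\frac{s}{p-1}}w)\bigr|\le 2C_0 s^{-a}(|w|^{p+1}+1)$, producing a term proportional to $s^{-a}\int\psi^2|w|^{p+1}\rho\,dy$ on the right-hand side.

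For $\tilde{s}_5$ chosen so that $4C_0 s^{-a}\cdot\tfrac{p+1}{2}\le \tfrac{1}{2}\cdot\tfrac{2}{p+1}$, that bad term can be absorbed into a fraction of the left-hand side, leaving
\begin{equation*}
\int_{\mathbb{R}^n}\psi^2|w|^{p+1}\rho\,dy
\;\le\; C(p)\!\int_{\mathbb{R}^n}\psi^2\!\left(|\nabla w|^2 + \tfrac{1}{p-1}|w|^2\right)\rho\,dy
\;+\; C(p)\bigl(Q' + \|\psi\|_{L^\infty}^2\bigr),
\end{equation*}
valid for all $s \geq \tilde{s}_5$. Now I use $0\le\psi\le1$ and $\operatorname{supp}\psi\subset \mathbf{B}_{2R}$ to bound the first right-hand side integral by $\max\{1,(p-1)^{-1}\}\,\|w\|_{H^1_\rho(\mathbf{B}_{2R})}^2$, and $\psi\equiv 1$ on $\mathbf{B}_R$ to bound the left-hand side below by $\|w\|_{L^{p+1}_\rho(\mathbf{B}_R)}^{p+1}$. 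Collecting constants yields \eqref{equ:boundLp1} with $K_1$ depending only on $a,p,n,M,Q'$.

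The argument is essentially routine once $\mathcal{E}_\psi$ is in hand, and the only technical point is the absorption step, which simply fixes the threshold $\tilde{s}_5$. The real content has already been absorbed into Proposition \ref{prop:upElc} (whose proof is deferred to the appendix), since it is precisely the uniform lower bound on $\mathcal{E}_\psi$ that makes this local $L^{p+1}$--$H^1$ interpolation possible in the presence of the perturbation $h$.
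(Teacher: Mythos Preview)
Your proof is correct and follows essentially the same approach as the paper: rearrange the definition \eqref{equ:localE} of $\mathcal{E}_\psi$, use the pointwise bound \eqref{equ:estimateHhe} on the $H$-term and absorb the resulting $s^{-a}$ contribution by choosing $s$ large, then invoke the lower bound $\mathcal{E}_\psi\ge -Q'$ from Proposition \ref{prop:upElc} together with the support properties of $\psi$. The paper's proof is simply a more compressed version of exactly this computation.
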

\begin{proof} From \eqref{equ:estimateHhe} and the definition of $\mathcal{E}_\psi$ in \eqref{equ:localE}, we have $\forall s \geq \max\{s_0,s_1\}$,
\begin{align}
\int_{\mathbb{R}^n}\psi^2 |w|^{p+1}\rho dy &\leq -2(p+1)\mathcal{E}_\psi[w](s)\nonumber\\
&+ (p+1)\int_{\mathbb{R}^n} \psi^2 \left(|\nabla w|^2 + \frac{1}{p-1}|w|^2 \right)\rho dy + 1,\label{eq:tmpEu12}
\end{align}
where $s_1$ is large enough such that $2C_0s^{-a} \leq \frac{1}{2(p+1)}$ for all $s \geq s_1$.\\
Thus, \eqref{equ:boundLp1} follows from the lower bound of $\mathcal{E}_\psi$ and the property of $\psi$. This ends the proof of Lemma \ref{rema:boundLpW12}.
\end{proof}
\begin{rema} \label{rema:conctLW}
By \eqref{equ:boundLp1}, the proof of estimate \eqref{equ:keyint} is equivalent to showing that
\begin{equation}\label{equ:keyintW}
\int_s^{s+1} \|w(\tau)\|_{H^1_\rho(\mathbf{B}_{R})}^{2q} d\tau \leq K_q, \quad\forall s \geq \hat{s}_2.
\end{equation}
Note from $(i)$ and $(iv)$ in Proposition \ref{prop:boundEpsi} that \eqref{equ:keyintW} already holds in the case $q = 2$.
\end{rema}

\noindent In order to derive \eqref{equ:keyintW} for all $q \geq 2$, we need the following result:
\begin{lemm} \label{lemm:tmpq2} Let $a, p, n, M$ be fixed and $w$ be solution of equation \eqref{equ:divw1} satisfying \eqref{equ:proW}. Then there exists $\tilde{s}_6 \geq \tilde{s}_3$ such that 
\begin{equation}\label{equ:boundLp123}
\|w\|_{H^1_\rho(\mathbf{B}_R)}^{2} \leq K_2\left(1 + \|\psi^2 w w_s \|^2_{L^1_\rho(\mathbf{B}_{2R})}\right),\quad \forall s \geq \tilde{s}_6,
\end{equation}
where $K_2 = K_2(a,p, n, M,Q',K')$.
\end{lemm}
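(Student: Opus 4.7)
The plan is to derive \eqref{equ:boundLp123} as a localized analog of Proposition \ref{prop:boundEpsi}(iv). Since $\psi \equiv 1$ on $\mathbf{B}_R$, it suffices to control the weighted Dirichlet-type quantity
$$L(s) := \int_{\mathbb{R}^n} \psi^2\Bigl(|\nabla w|^2 + \tfrac{1}{p-1} w^2\Bigr)\rho\,dy,$$
since $\|w\|_{H^1_\rho(\mathbf{B}_R)}^2 \leq L(s)$ up to a $p$-dependent multiplicative constant. First I would multiply \eqref{equ:divw1} by $\psi^2 w\rho$ and integrate over $\mathbb{R}^n$; integration by parts on the principal part produces the virial-type identity
$$L(s) = \int \psi^2 |w|^{p+1}\rho\,dy - \int \psi^2 w\, w_s\,\rho\,dy - 2\int \psi\, w\,\nabla\psi\cdot \nabla w\, \rho\,dy + \Pi_1(s),$$
where $\Pi_1(s) = e^{-ps/(p-1)}\int \psi^2 h\bigl(e^{s/(p-1)}w\bigr) w\,\rho\,dy$.

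The crucial algebraic step is to use the definition \eqref{equ:localE} of $\mathcal{E}_\psi$ to substitute
$$\int \psi^2 |w|^{p+1}\rho\,dy = \tfrac{p+1}{2}\, L(s) - (p+1)\,\mathcal{E}_\psi[w](s) - (p+1)\,\Pi_2(s),$$
where $\Pi_2(s) = e^{-(p+1)s/(p-1)}\int \psi^2 H\bigl(e^{s/(p-1)}w\bigr)\rho\,dy$. The $L^{p+1}$ contributions recombine, and since $\tfrac{p+1}{2}-1=\tfrac{p-1}{2}>0$, this yields the clean identity
$$\tfrac{p-1}{2}\, L(s) = (p+1)\,\mathcal{E}_\psi[w](s) + \int \psi^2 w\, w_s\,\rho\,dy + 2\int \psi\, w\,\nabla\psi\cdot \nabla w\,\rho\,dy + (p+1)\,\Pi_2(s) - \Pi_1(s).$$
This is exactly the cancellation that removes the sign-indefinite $L^{p+1}$ term against the local energy.

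To close the estimate, I would bound each term on the right: Proposition \ref{prop:upElc} supplies $\mathcal{E}_\psi[w](s)\leq K'$; Young's inequality on the cutoff cross term absorbs a small fraction of $\int \psi^2|\nabla w|^2\rho\,dy$ into the left-hand side, leaving a remainder controlled by $C\|\nabla\psi\|_\infty^2\|w(s)\|_{L^2_\rho}^2\leq C\|\nabla\psi\|_\infty^2 J_1$ thanks to Proposition \ref{prop:boundEpsi}(ii); the perturbations obey $|\Pi_1(s)|+|\Pi_2(s)|\leq 2C_0 s^{-a}\bigl(\int \psi^2|w|^{p+1}\rho\,dy + \|\psi\|_\infty^2\bigr)$ via \eqref{equ:estimateHhe}, and applying Lemma \ref{rema:boundLpW12} with the same $\psi$ bounds $\int \psi^2|w|^{p+1}\rho\,dy$ by a multiple of $L(s)$ plus a constant, so that for $s$ large enough the $s^{-a}$ prefactor renders these contributions absorbable; finally $\left|\int \psi^2 w w_s\rho\,dy\right|\leq \|\psi^2 w w_s\|_{L^1_\rho(\mathbf{B}_{2R})}$, and the trivial inequality $t\leq \tfrac{1}{2}(1+t^2)$ upgrades this to the squared form in the statement. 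Choosing $\tilde{s}_6\geq \tilde{s}_3$ large enough that all $s^{-a}$-small contributions can indeed be absorbed then concludes the proof.

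The main obstacle is the circular dependence between $\int \psi^2 |w|^{p+1}\rho\,dy$ and $L(s)$: the virial identity expresses $L(s)$ in terms of the $L^{p+1}$ integral, while Lemma \ref{rema:boundLpW12} goes the opposite way. Closing this loop hinges on the cancellation produced by $\mathcal{E}_\psi$ combined with the perturbation estimate \eqref{equ:estimateH}: the latter supplies a small $s^{-a}$ factor that prevents the super-critical perturbative terms from destroying the strictly positive effective coefficient $\tfrac{p-1}{2}$, and dictates the quantitative choice of the threshold $\tilde{s}_6$.
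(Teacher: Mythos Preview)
Your proposal is correct and follows essentially the same route as the paper: multiply \eqref{equ:divw1} by $\psi^2 w\rho$, use the definition of $\mathcal{E}_\psi$ to cancel the $L^{p+1}$ term and invoke Proposition~\ref{prop:upElc}, then absorb the perturbative $s^{-a}$ contributions via \eqref{equ:estimateHhe} and Lemma~\ref{rema:boundLpW12}. The one point of difference is the cross term $2\int \psi\,w\,\nabla\psi\cdot\nabla w\,\rho\,dy$: you absorb part of it into $L(s)$ by Young's inequality, whereas the paper integrates by parts once more (see \eqref{equ:estJws}) to rewrite it entirely in terms of $\int |w|^2\rho\,dy$ and bound it directly by a constant using Proposition~\ref{prop:boundEpsi}(ii), avoiding any absorption step for this term.
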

\begin{proof}
Multiplying equation \eqref{equ:divw1} with $\psi^2w\rho$, integrating over $\mathbb{R}^n$, using the definition of $\mathcal{E}_\psi$ and estimate  \eqref{equ:estimateHhe}, we have
\begin{align*}
\int_{\mathbb{R}^n} \psi^2 \left(|\nabla w|^2 + \frac{1}{p-1}|w|^2 \right)\rho dy & \leq \frac{2}{p-1}\int_{\mathbb{R}^n} \psi^2 w w_s \rho dy + \frac{2(p+1)}{p-1} \mathcal{E}_\psi[w](s)\\
& + \frac{4}{p-1}\int_{\mathbb{R}^n} \psi w \nabla \psi. \nabla w \rho dy \\
& + \frac{4(p+1)C_0}{(p-1)s^a}\int_{\mathbb{R}^n}\psi^2(|w|^{p+1} + 1)\rho dy, \;\; \forall s \geq s_0.
\end{align*}
Using \eqref{eq:tmpEu12}, then taking $s_2$ large such that $\frac{4(p+1)^2C_0}{(p-1)s^a}\leq \frac{1}{2}$ and noting that $\mathcal{E}$ is bounded, we have for all $s \geq \max\{s_0,s_1,s_2\}$,
$$\int_{\mathbb{R}^n} \psi^2 \left(|\nabla w|^2 + \frac{1}{p-1}|w|^2 \right)\rho dy \leq C\left(\int_{\mathbb{R}^n} \psi^2 w w_s \rho dy +  \int_{\mathbb{R}^n} \psi w \nabla \psi. \nabla w \rho dy + 1\right).$$
Let $J_\psi[w](s) = \int_{\mathbb{R}^n} \psi w \nabla \psi. \nabla w \rho dy$, then one can show that $J_\psi[w](s) \leq C_1$ (see \eqref{equ:estJws} for a proof of this fact). Hence, we have for all $s \geq \max\{s_0,s_1,s_2\}$,
$$\int_{\mathbb{R}^n} \psi^2 \left(|\nabla w|^2 + \frac{1}{p-1}|w|^2 \right)\rho dy \leq C_2\left(\int_{\mathbb{R}^n} \psi^2 w w_s \rho dy + 1\right).$$
Thus, \eqref{equ:boundLp123} follows from the property of $\psi$, and Lemma \ref{lemm:tmpq2} is proved.
\end{proof}

\noindent Since the estimate \eqref{equ:keyintW} already holds in the case $q = 2$, we now use a bootstrap argument in order to get \eqref{equ:keyintW} for all $q \geq 2$.
\begin{proof}[\textbf{Proof of \eqref{equ:keyintW} for all $q \geq 2 $ by a bootstrap argument}] This part is the same as in \cite{GMSiumj04}. We give it here for the sake of completeness. Suppose that \eqref{equ:keyintW} holds for some $q \geq 2$, let us show that \eqref{equ:keyintW} holds for all  $\tilde{q} \in [q, q + \epsilon]$ for some $\epsilon > 0$ independent from $q$. We start with Holder's inequality,
\begin{equation*}
\|\psi^2 w w_s \|_{L^1_\rho(\mathbf{B}_{2R})} \leq \|\psi w \|_{L^\lambda_\rho(\mathbf{B}_{2R})} \times \|\psi w_s \|_{L^{\lambda'}_\rho(\mathbf{B}_{2R})}, \quad \frac{1}{\lambda} + \frac{1}{\lambda'} = 1.
\end{equation*}
Using \eqref{equ:keyint} and applying Lemma \ref{lemm:intpola}, we obtain
$$\|w\|_{L^\lambda(\mathbf{B}_{2R})} \leq C_q', \quad \forall \lambda < \lambda_1(q) = p+1  - \frac{p-1}{q+1}.$$
Let us now bound $\|\psi w_s \|_{L^{\lambda'}_\rho(\mathbf{B}_{2R})}$. We remark that for $q$ large then $\lambda$ approaches to $p+1$ and $\lambda'$ approaches to $p_1 = \frac{p+1}{p}$. Let $f = \psi w_s$ and make use Holder's inequality,
$$\|f\|_{L^{\lambda'}} \leq \|f\|_{L^2}^{1 - \theta} \times \|f\|_{L^{p_1}}^\theta, \quad  \frac{1}{\lambda'} = \frac{1 - \theta}{2} + \frac{\theta}{p_1}, \quad \theta \in [0, 1].$$
From now on, we take $\lambda \geq 2$ and fix $\theta = \frac{(\lambda - 2)(p+1)}{\lambda(p-1)}$ (note that with this choice, $\theta \in [0,1]$). From Lemma \ref{lemm:tmpq2}, we have
\begin{equation*}
\|w(s)\|_{H^1_\rho(\mathbf{B}_R)}^2 \leq K_2'\left(1 +  \|\psi w_s\|_{L^2_\rho(\mathbf{B}_{2R})}^{1 - \theta} \times \|\psi w_s\|_{L^{p_1}_\rho(\mathbf{B}_{2R})}^\theta \right).
\end{equation*}
This follows that
\begin{equation}
\int_s^{s+1} \|w(s)\|_{H^1_\rho(\mathbf{B}_R)}^{2\tilde{q}} d\tau \leq C_{\tilde{q}}\left[1 + \underbrace{\int_s^{s+1} \|\psi w_s\|_{L^2_\rho(\mathbf{B}_{2R})}^{\tilde{q}(1 - \theta)} \times \|\psi w_s\|_{L^{p_1}_\rho(\mathbf{B}_{2R})}^{\tilde{q}\theta} d\tau}_{\mathbf{G}}\right],
\end{equation}
for some $\tilde{q} > q$.\\
Let $\alpha = \frac{2}{(1 - \theta)\tilde{q}}$ and use Holder's inequality in time to $\mathbf{G}$, we obtain
\begin{align*}
\mathbf{G}  &\leq \left(\int_s^{s+1}\|\psi w_s\|_{L^2_\rho(\mathbf{B}_{2R})}^2 d\tau\right)^{\frac{1}{\alpha}}
\left(\int_s^{s+1}\|\psi w_s\|_{L^{p_1}_\rho(\mathbf{B}_{2R})}^{\tilde{q}\theta \alpha'} d\tau\right)^{\frac{1}{\alpha'}}\\
&\leq (2J_0)^\frac{1}{\alpha}\left(\int_s^{s+1}\|\psi w_s\|_{L^{p_1}_\rho(\mathbf{B}_{2R})}^{\tilde{q}\theta \alpha'} d\tau\right)^{\frac{1}{\alpha'}} \equiv \mathbf{G}_1,
\end{align*}
where we used $(i)$ in Proposition \ref{prop:boundEpsi}.\\
Let us bound $\mathbf{G}_1$. To this end, we use the $L^p-L^q$ estimate for the heat equation (see Lemmas 6.3 and 6.4 in \cite{GMSiumj04}) to get
\begin{align*}
\int_s^{s+1}\|\psi w_s\|_{L^{p_1}_\rho(\mathbf{B}_{2R})}^{\tilde{q}\theta \alpha'}d\tau &\leq C_{\tilde{q}}' \left(1 +  \int_s^{s+1}\left\||w|^p\right\|_{L^{p_1}_\rho(\mathbf{B}_{2R})}^{\tilde{q}\theta \alpha'}d\tau\right)\\
& = C_{\tilde{q}}' \left(1 +  \int_s^{s+1}\left\|w\right\|_{L^{p+1}_\rho(\mathbf{B}_{2R})}^{p\tilde{q}\theta \alpha'}d\tau\right)\\
& \leq C_{\tilde{q}}'' \left(1 +  \int_s^{s+1}\left\|w\right\|_{H^1_\rho(\mathbf{B}_{4R})}^{\frac{2p\tilde{q}\theta \alpha'}{p+1}}d\tau\right) \quad \text{(using Lemma \ref{eq:tmpEu12}).} 
\end{align*}
By Proposition 6.2 in \cite{GMSiumj04}, we have $\frac{2p\tilde{q}\theta \alpha'}{p+1} < 2q$ for all $\tilde{q} \in [q, q + \frac{2}{p+1}]$. Then, applying Holder's inequality again yields
\begin{align*}
\int_s^{s+1} \|w(s)\|_{H^1_\rho(\mathbf{B}_R)}^{2\tilde{q}} d\tau \leq C_{\tilde{q}}'''\left[1 + \left(\int_s^{s+1} \|w(s)\|_{H^1_\rho(\mathbf{B}_{4R})}^{2q} d\tau\right)^\frac{1}{2q \alpha'} \right] \leq \bar{C}_{\tilde{q}}.
\end{align*}
Thus, inequality \eqref{equ:keyintW} is valid for all $\tilde{q} \in [q, q + \frac{2}{p+1}]$. Repeating this argument, we would obtain that \eqref{equ:keyintW} holds for all $q \geq 2$. This concludes the proof of Lemma \ref{lemm:keyint}, Theorem \ref{theo:blrate1} and Theorem \ref{theo:blrate} too.
\end{proof}

\subsection{Limit of {$w$} as {$s \to +\infty$}} \label{sec:limit}
This section is devoted to the proof of Theorem \ref{theo:wtoin}. Note in the unperturbed case ($h \equiv 0$) that Theorem \ref{theo:wtoin} was proved in \cite{GKcpam89} (see also \cite{GKcpam85}, \cite{GKiumj87}). The proof is divided into two steps. The first step is to show that the limit of solution in similarity variables exists and belongs to the set of solutions of the following equation, 
\begin{equation}\label{equ:staSol}
0 = \Delta w - \frac{1}{2}y.\nabla w - \frac{1}{p-1}w + |w|^{p-1}w,
\end{equation}
Then, by using a nondegeneracy result (Lemma \ref{lemm:nonde}), the blow-up criterion (Lemma \ref{lemm:cri}) and suitable energy arguments, we shall show that the possibility of $w_a \to 0$ as $s \to +\infty$ is excluded if $a$ is a blow-up point. Let us restate Theorem \ref{theo:wtoin} in below:
\begin{prop}[\textbf{Limit of $w$ as $s \to +\infty$}] \label{prop:wtoin}
Let $a, p, n, M$ be fixed, $p$ be a sub-critical non-linearity given in \eqref{equ:rangep}. Consider $u(t)$ a solution of equation \eqref{equ:problem} which blows up at time $T$ and $a$ a blow-up point. Then\\
$$\lim_{s \to +\infty} \,w_{a}(y,s) = \pm \kappa, \quad \text{uniformly on each compact subset of $\mathbb{R}^n$.}$$
\end{prop}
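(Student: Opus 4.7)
The plan is to follow the strategy of Giga--Kohn \cite{GKcpam89}, treating the perturbation $e^{-ps/(p-1)}h(e^{s/(p-1)}w)$ as a correction that vanishes thanks to \eqref{equ:estimateH}. First, by Theorem~\ref{theo:lya} the functional $\mathcal{J}[w_a]$ is nonincreasing on $[\max\{\hat s_0,-\log T\},+\infty)$; since $\mathcal{E}[w_a]$ is bounded from below by Proposition~\ref{prop:boundEpsi} and $\theta s^{1-a}\to 0$, $\mathcal{J}[w_a]$ has a finite limit as $s\to+\infty$. Integrating \eqref{equ:estimateJinT} gives
\begin{equation}\label{eq:pp:int}
\int_{\hat s_0}^{+\infty}\int_{\mathbb{R}^n}(\partial_s w_a)^2\,\rho\,dy\,ds<+\infty.
\end{equation}

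Next, Theorem~\ref{theo:blrate} furnishes a uniform $L^\infty$ bound on $w_a$, and \eqref{equ:estimateH} keeps the perturbation uniformly bounded on compacts. Standard interior parabolic regularity applied to \eqref{equ:divw1} then yields uniform $C^{2,\alpha}_{loc}$ estimates on $w_a(\cdot,s)$ for $s$ large. Given any sequence $s_n\to+\infty$, I extract a subsequence along which $w_a(\cdot,s_n+\tau)\to W(\cdot,\tau)$ in $C^{2}_{loc}$ uniformly for $\tau$ in a compact interval. By \eqref{eq:pp:int} and Fubini, $\int_0^1\int_{\mathbb{R}^n}(\partial_\tau W)^2\rho\,dy\,d\tau=0$, so $W$ is $\tau$-independent; passing to the limit in \eqref{equ:divw1} and using that \eqref{equ:estimateH} forces the perturbation term to vanish pointwise as $s\to+\infty$, one finds that $W=W(y)$ is a bounded solution on $\mathbb{R}^n$ of the stationary equation \eqref{equ:staSol}.

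The classical Giga--Kohn classification of bounded solutions of \eqref{equ:staSol} for $p$ in the subcritical range \eqref{equ:rangep} asserts that $W\equiv 0$ or $W\equiv\pm\kappa$. Hence the $\omega$-limit set of $w_a$ in $C^{2}_{loc}$ is contained in $\{0,+\kappa,-\kappa\}$; being the $\omega$-limit set of a continuous curve in $C^{2}_{loc}$ it is connected, hence a single point, so $w_a(\cdot,s)\to\ell$ in $C^{2}_{loc}$ for some $\ell\in\{0,+\kappa,-\kappa\}$, and in particular uniformly on every compact subset of $\mathbb{R}^n$.

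The main obstacle, and the core of the argument, is to exclude $\ell=0$ when $a$ is a blow-up point. This is the role of a nondegeneracy result (Lemma~\ref{lemm:nonde}), whose proof should combine the blow-up criterion Lemma~\ref{lemm:cri} with the energy estimates of Proposition~\ref{prop:boundEpsi}: concretely, one shows that at any blow-up point $a$ there is a constant $K>0$ and $c_K>0$ such that $\sup_{|y|\le K}|w_a(y,s)|\ge c_K$ for $s$ large. Heuristically, if instead $w_a\to 0$ uniformly on compacts, then $\mathcal{J}[w_a](s)\to 0$ from above, a parabolic bootstrap on \eqref{equ:divw1} propagates the smallness to the full $L^2_\rho$-norm, and rescaling back to $u$ forces $(T-t)^{1/(p-1)}\|u(t)\|_{L^\infty(\mathbf{B}_r(a))}\to 0$ for some $r>0$, contradicting the fact that $a$ is a blow-up point. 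Once $\ell=\pm\kappa$ is ruled in, the $C^{2}_{loc}$ convergence established above gives the claimed uniform convergence on each compact subset of $\mathbb{R}^n$, completing the proof of Proposition~\ref{prop:wtoin}.
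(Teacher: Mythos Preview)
Your overall strategy coincides with the paper's: compactness from parabolic regularity (the paper invokes Lemma~\ref{lemm:regupa} plus Arzel\`a--Ascoli), identification of subsequential limits as bounded solutions of \eqref{equ:staSol}, the Giga--Kohn classification (Lemma~\ref{lemm:stasol}), and then exclusion of the limit $0$ via nondegeneracy. Your use of the connectedness of the $\omega$-limit set to pass from subsequential to full convergence is a clean touch that the paper leaves implicit.

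The gap is in your exclusion of $\ell=0$. Your heuristic ``$w_a\to 0$ on compacts $\Rightarrow$ $\mathcal{J}[w_a]\to 0$ $\Rightarrow$ parabolic bootstrap gives smallness in $L^2_\rho$ $\Rightarrow$ rescaling gives $(T-t)^{1/(p-1)}\|u(t)\|_{L^\infty(\mathbf{B}_r(a))}\to 0$'' breaks at the last implication: smallness of $w_a$ (at the single base point $a$) only controls $u$ on the \emph{shrinking} parabolic ball $\{|x-a|\le K\sqrt{T-t}\}$, not on a fixed ball $\mathbf{B}_r(a)$, which is what Lemma~\ref{lemm:nonde} requires. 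The paper closes this by varying the base point: one uses the contrapositive of Lemma~\ref{lemm:cri} (not a parabolic bootstrap) to get $\|w_b(s)\|_{L^2_\rho}\le C\big(\mathcal{J}[w_b](s)\big)^{1/(p+1)}$ for every $b$, then the interpolation inequality $|w_b(0,s)|\le C\big(\|w_b\|_{L^2(\mathbf{B}_R)}^{\theta}\|\nabla w_b\|_{L^\infty}^{1-\theta}+\|w_b\|_{L^2(\mathbf{B}_R)}\big)$, and finally the continuous dependence of $\mathcal{J}[w_b]$ on $b$ together with its monotonicity in $s$ to propagate the smallness of $\mathcal{J}[w_a]$ to $\mathcal{J}[w_b]$ for all $b$ close to $a$. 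This yields $|u(b,t)|\le\epsilon''(T-t)^{-1/(p-1)}$ on a genuine space--time cylinder around $(a,T)$, at which point Lemma~\ref{lemm:nonde} applies and gives the contradiction. You should replace the vague ``parabolic bootstrap'' and ``rescaling'' steps by this base-point continuity argument.
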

Before going into the proof of Proposition \ref{prop:wtoin}, let us first derive some elementary results. The first one concerns the stationary solutions in $\mathbb{R}^n$ of equation \eqref{equ:staSol}. Particularly, we have the following:
\begin{lemm}[\textbf{Stationary solutions, Giga and Kohn \cite{GKcpam85}}]\label{lemm:stasol}
Let $p$ satisfy \eqref{equ:rangep}, then all bounded solutions of \eqref{equ:staSol} are constants: $w \equiv 0$ or $w \equiv \pm \kappa$.
\end{lemm}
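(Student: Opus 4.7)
The plan is to follow the classical Pohozaev/variational strategy of Giga and Kohn, exploiting the Gaussian weight $\rho$ to make all integrations by parts on $\mathbb{R}^n$ boundary-free. Since $w$ is a bounded solution of a uniformly elliptic equation, standard interior regularity gives bounds on the derivatives of $w$, and $\rho$ decays faster than any polynomial, so the boundary terms at infinity vanish in every integration by parts below.

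First I would multiply \eqref{equ:staSol} by $w\rho$ and integrate. Rewriting the equation in divergence form as $\frac{1}{\rho}\mathrm{div}(\rho \nabla w) - \frac{w}{p-1} + |w|^{p-1}w = 0$, one obtains the energy identity
\begin{equation*}
\int_{\mathbb{R}^n}|\nabla w|^2 \rho\, dy + \frac{1}{p-1}\int_{\mathbb{R}^n} w^2 \rho\, dy - \int_{\mathbb{R}^n}|w|^{p+1}\rho\, dy = 0. \qquad \mathrm{(A)}
\end{equation*}
Next I would test \eqref{equ:staSol} against the Pohozaev-type multiplier $(y\cdot\nabla w)\rho$. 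Using $\nabla\rho = -\tfrac{y}{2}\rho$ together with the pointwise identities $\nabla w\cdot\nabla(y\cdot\nabla w) = |\nabla w|^2 + \tfrac{1}{2}\,y\cdot\nabla(|\nabla w|^2)$ and $|w|^{p-1}w\,(y\cdot\nabla w) = \tfrac{1}{p+1}\,y\cdot\nabla(|w|^{p+1})$, and integrating by parts, I would derive a second identity (B) linking $\int|\nabla w|^2\rho$, $\int w^2\rho$, $\int(y\cdot\nabla w)^2\rho$ and $\int|w|^{p+1}\rho$ with explicit coefficients depending on $n$ and $p$.

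Eliminating $\int|w|^{p+1}\rho$ between (A) and (B) should yield an identity of the shape
\begin{equation*}
\alpha(n,p)\int_{\mathbb{R}^n}|\nabla w|^2\rho\, dy + \beta(n,p)\int_{\mathbb{R}^n}(y\cdot\nabla w)^2\rho\, dy = 0,
\end{equation*}
where the coefficients $\alpha(n,p)$ and $\beta(n,p)$ are both strictly positive precisely under the Sobolev sub-critical condition $(n-2)p < n+2$. This forces $\nabla w \equiv 0$, so $w$ is a constant $c$; plugging back into \eqref{equ:staSol} gives $c\bigl(|c|^{p-1} - \tfrac{1}{p-1}\bigr) = 0$, hence $c = 0$ or $c = \pm\kappa$ with $\kappa = (p-1)^{-1/(p-1)}$.

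The delicate step will be the bookkeeping in the Pohozaev identity: unlike in the unweighted Euclidean case, the multiplier $(y\cdot\nabla w)\rho$ interacts non-trivially with $\nabla\rho$, producing additional weighted terms that must combine in exactly the right way for the sub-critical threshold $(n-2)p<n+2$ to emerge as the positivity condition for $\alpha$ and $\beta$. By contrast, justifying the vanishing of boundary terms at infinity is routine given the Gaussian decay of $\rho$ and interior elliptic regularity.
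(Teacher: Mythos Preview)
Your approach is essentially the same as the paper's: the paper's proof is just a reference to Giga--Kohn \cite{GKcpam85} with the remark that it relies on multiplying factors and a Pohozaev technique, leading to the identity
\[
\left(\frac{n}{p+1} - \frac{2-n}{2}\right)\int_{\mathbb{R}^n}|\nabla w|^2\rho\, dy + \frac{1}{2}\left(\frac{1}{2} - \frac{1}{p+1}\right)\int_{\mathbb{R}^n}|y|^2|\nabla w|^2 \rho\, dy = 0,
\]
whose coefficients are positive under \eqref{equ:rangep}, forcing $\nabla w\equiv 0$. One small caution: the second weighted term in the paper's identity is $\int|y|^2|\nabla w|^2\rho$, not $\int(y\cdot\nabla w)^2\rho$ as you anticipate, and reaching the clean two-term identity actually requires combining more than just the two testings (A) and (B) (in Giga--Kohn a third relation is used to eliminate the remaining $\int w^2\rho$ and $\int|w|^{p+1}\rho$ terms), so expect a bit more bookkeeping than your outline suggests.
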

\begin{proof} The proof is given in Proposition 2 of \cite{GKcpam85}. For the reader's interest, we mention that the proof relies on a clever use of multiplying factors, together with a Pohozaev technique, resulting in the following identity:
\begin{equation}\label{equ:Pohoden}
\left(\frac{n}{p+1} - \frac{2-n}{2}\right)\int_{\mathbb{R}^n}|\nabla w|^2\rho dy + \frac{1}{2}\left(\frac{1}{2} - \frac{1}{p+1}\right)\int_{\mathbb{R}^n}|y|^2|\nabla w|^2 \rho dy = 0.
\end{equation}
From \eqref{equ:Pohoden} and the fact that $p$ is Sopolev subcritical, it follows that $\frac{n}{p+1} - \frac{2-n}{2} > 0$ and $\frac{1}{2} - \frac{1}{p+1} > 0$, hence $\nabla w \equiv 0$. This implies that $w$ is actually a constant.  This concludes the proof of Lemma \ref{lemm:stasol}.
\end{proof}
The second one is due to parabolic estimates:
\begin{lemm}[\textbf{Parabolic estimates}]\label{lemm:regupa} Let $u$ be a solution to equation \eqref{equ:problem}. Assume that $T= T_{\max}(u_0)  < +\infty$ and that $u$ satisfies \eqref{equ:blrateu1}. Then, there is a positive constant $C$ such that for all $t \in [T/2, T)$,
\begin{equation}\label{equ:parestUt}
\|\nabla u(t)\|_{L^\infty(\mathbb{R}^n)} \leq C(T-t)^{-\frac{1}{p-1} - \frac{1}{2}} \quad \text{and} \quad \|\nabla^2 u(t)\|_{L^\infty(\mathbb{R}^n)} \leq C(T-t)^{-\frac{1}{p-1} - 1}.
\end{equation}
In similarity variables, we have for all $s \in [-\log(T/2), +\infty)$ and $x_0 \in \mathbb{R}^n$,
\begin{equation}\label{equ:paraEst}
\|\nabla w_{x_0,T}(s)\|_{L^\infty(\mathbb{R}^n)} \leq C \quad \text{and} \quad \|\nabla^2 w_{x_0,T}(s)\|_{L^\infty(\mathbb{R}^n)} \leq C. 
\end{equation}
\end{lemm}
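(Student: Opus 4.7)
The plan is to first establish the similarity-variable bounds \eqref{equ:paraEst} by parabolic regularity applied to $w_{x_0,T}$, and then read off \eqref{equ:parestUt} from the scaling relations \eqref{equ:simivariables}. From Theorem \ref{theo:blrate} together with the assumption $u_0 \in L^\infty(\mathbb{R}^n)$, one has $\|w_{x_0,T}(s)\|_{L^\infty(\mathbb{R}^n)} \leq C$ uniformly in $x_0 \in \mathbb{R}^n$ for all $s \geq s' = \max\{\hat{s}_1,-\log T\}$. Combined with the pointwise control \eqref{equ:estimateH} coming from \eqref{equ:h}, this bounds the whole nonlinear source of \eqref{equ:divw1} in $L^\infty$ once $s$ is large.

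First I would freeze $x_0$ and localize to a unit cylinder $\mathbf{B}_1 \times [s-1,s]$ in the $(y,s)$ variables. On this cylinder the drift $-\frac{y}{2}\cdot\nabla w$ has bounded smooth coefficients, and the equation satisfied by $w$ becomes an inhomogeneous linear parabolic problem with bounded forcing. Standard interior $L^p$ parabolic estimates in the spirit of \cite{LSU68} then produce a uniform bound $\|\nabla w\|_{L^\infty(\mathbf{B}_{1/2}\times[s-\frac{1}{2},s])}\leq C$. A second bootstrap step, using the $\mathcal{C}^1$-regularity of $h$ ensured by \eqref{equ:h} and the just-obtained bound on $\nabla w$, shows that the equation satisfied by $\nabla w$ also has an $L^\infty$ forcing, and one more interior estimate yields the uniform bound on $\nabla^2 w$ on a slightly smaller cylinder. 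Since equation \eqref{equ:divw1} is translation-invariant in $x_0$ once written in the centered $y$-coordinate, these bounds are uniform in $x_0 \in \mathbb{R}^n$, which proves \eqref{equ:paraEst}.

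To obtain \eqref{equ:parestUt} I would just differentiate \eqref{equ:simivariables}: setting $y=0$ (so $x=x_0$) and using the chain rule gives
\[
\nabla_x u(x_0,t) = (T-t)^{-\frac{1}{p-1}-\frac{1}{2}} \nabla_y w_{x_0,T}(0,s), \qquad \nabla_x^2 u(x_0,t) = (T-t)^{-\frac{1}{p-1}-1} \nabla_y^2 w_{x_0,T}(0,s),
\]
and \eqref{equ:paraEst} transfers to \eqref{equ:parestUt} for every $t \in [T/2,T)$, the remaining finitely many values of $t$ (if any) for which $s = -\log(T-t) < s'$ being handled trivially by the classical local smoothness of $u$ away from the blow-up time.

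The one point that genuinely uses the structural assumptions on $h$ is the second bootstrap step, where the equation for $\nabla w$ contains $h''$; the third estimate in \eqref{equ:h} is precisely what is needed to keep its contribution bounded in $L^\infty$. Everything else is standard interior parabolic theory on a unit-size cylinder, with the log-decay weight $s^{-a}$ appearing in \eqref{equ:estimateH} being harmless for these qualitative bounds.
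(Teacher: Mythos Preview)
Your approach is essentially the one the paper has in mind: it simply observes that the bounds $|h(z)|\le C(|z|^p+1)$ and $|h'(z)|\le C(|z|^{p-1}+1)$ coming from \eqref{equ:h} allow the standard argument (Proposition~23.15 in Quittner--Souplet \cite{QPbook07}) for the unperturbed case to go through unchanged.

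One small correction to your final paragraph: the second bootstrap step does \emph{not} require $h''$. Differentiating \eqref{equ:divw1} once in $y_i$ yields a linear parabolic equation for $\partial_i w$ whose lower-order coefficients involve only $p|w|^{p-1}$ and $e^{-s}h'\big(e^{s/(p-1)}w\big)$; both are bounded thanks to the first step and to \eqref{equ:estimateH} with $j=1$. Interior $L^q$ regularity applied to this linear equation already bounds $\nabla(\partial_i w)$ in $L^\infty$, which is the desired $\nabla^2 w$ estimate. No second differentiation of the nonlinearity---and hence no use of $h''$---is needed. This is exactly why the paper singles out only the $h$ and $h'$ bounds in its one-line justification.
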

\begin{proof} Since $|h(z)| \leq C(|z|^p + 1)$ and $|h'(z)| \leq C(|z|^{p-1} + 1)$ from \eqref{equ:h}, the proof given in Proposition 23.15, page 189 of Souplet and Quittner \cite{QPbook07} in the case $h \equiv 0$ extends with no difficulty in this case.
\end{proof}
The last one is the nondegeneracy result from Giga and Kohn \cite{GKcpam89}:
\begin{lemm}[\textbf{Nondegeneracy, Giga and Kohn \cite{GKcpam89}}]\label{lemm:nonde} Let $p > 1$, $T > 0$, $r > 0$, $\sigma \in (0,1)$, $a \in \mathbb{R}^n$ and denote $Q_{r,\sigma}(a) = \mathbf{B}_r(a) \times (T - \sigma, T)$. There exists $\epsilon = \epsilon(n,p) > 0$ such that if $u$ is a classical solution of 
\begin{equation}\label{equ:problmeLi}
u_t - \Delta u = F(u), \quad (x,t) \in Q_{r,\sigma}(a),
\end{equation}
where $|F(u)| \leq M(|u|^p + 1)$ for some $M > 0$. Assume that $u$ satisfies 
\begin{equation}\label{equ:Noncond}
|u(x,t)| \leq \epsilon(T-t)^{-\frac{1}{p-1}}, \quad (x,t) \in Q_{r,\sigma}(a),
\end{equation}
then $u$ is uniformly bounded in a neighborhood of $(a,T)$.
\end{lemm}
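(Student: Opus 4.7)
The plan is to work in similarity variables centered at the suspected singular point $(a,T)$ and use a dissipative energy argument, in the spirit of Giga--Kohn \cite{GKcpam89}, to show that the $L^\infty$-smallness of $u$ propagates to decay of the rescaled solution at the sharp rate; this decay then forces $u$ to remain bounded in a parabolic neighborhood of $(a,T)$.

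First I would introduce $w(y,s)=(T-t)^{1/(p-1)}u(a+y\sqrt{T-t},t)$, $s=-\log(T-t)$. The hypothesis \eqref{equ:Noncond} then reads $|w(y,s)|\le\epsilon$ on the expanding set $\{|y|\le r e^{s/2}\}$ for $s\ge s_0:=-\log\sigma$, and $w$ satisfies
\begin{equation*}
\partial_s w = \Delta w - \tfrac{y}{2}\cdot\nabla w - \tfrac{w}{p-1} + G(w,s), \qquad |G(w,s)|\le M\bigl(|w|^p+e^{-ps/(p-1)}\bigr).
\end{equation*}
Testing against $w\rho$, and using that the Gaussian weight $\rho$ is exponentially small on the complement of the smallness region, yields
\begin{equation*}
\tfrac{1}{2}\tfrac{d}{ds}\|w\|_{L^2_\rho}^2 + \|\nabla w\|_{L^2_\rho}^2 + \tfrac{1}{p-1}\|w\|_{L^2_\rho}^2 \le M\epsilon^{p-1}\|w\|_{L^2_\rho}^2 + Me^{-ps/(p-1)}\|w\|_{L^2_\rho} + o(1).
\end{equation*}
Choosing $\epsilon$ so small that $M\epsilon^{p-1}\le \tfrac{1}{2(p-1)}$ gives exponential decay $\|w(s)\|_{L^2_\rho}\le Ce^{-\beta s}$ for some $\beta>0$. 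A parabolic Moser--De Giorgi iteration on the equation for $w$ (whose coefficients are locally bounded) upgrades this to $\|w(s)\|_{L^\infty(\mathbf{B}_R)}\le C_R e^{-\beta s}$ on every fixed ball, so by Lemma \ref{lemm:stasol} the only possible subsequential limit is $w(\cdot,s)\to 0$ (the limits $\pm\kappa$ being excluded by $|w|\le\epsilon<\kappa$).

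The decisive step is to upgrade this to the sharp decay rate $\tfrac{1}{p-1}$ in $L^\infty_{\mathrm{loc}}$, which is exactly the threshold at which $u=(T-t)^{-1/(p-1)}w$ becomes bounded. For this I would apply Duhamel's formula with the semigroup $e^{s\mathcal{L}_0}$ of the self-adjoint operator $\mathcal{L}_0=\Delta-\tfrac{y}{2}\cdot\nabla-\tfrac{1}{p-1}$, whose $L^2_\rho$-spectrum $\{-\tfrac{m}{2}-\tfrac{1}{p-1}:m\ge 0\}$ has top eigenvalue $-\tfrac{1}{p-1}$, and use $|G(w,\tau)|\le M\epsilon^{p-1}|w|+Me^{-p\tau/(p-1)}$ together with a Gronwall iteration in which $M\epsilon^{p-1}$ plays the role of a contractive perturbation of the linear semigroup. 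Parabolic regularity then bootstraps the resulting $L^2_\rho$ bound into
\begin{equation*}
\|w(s)\|_{L^\infty(\mathbf{B}_1)} \le C e^{-s/(p-1)} \qquad \text{for all $s$ large enough,}
\end{equation*}
and translating back through $|u(a+y\sqrt{T-t},t)|=(T-t)^{-1/(p-1)}|w(y,s)|$ yields the desired uniform bound on $u$ in a parabolic neighborhood of $(a,T)$.

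The main obstacle is precisely this last sharpening: the crude $L^2_\rho$ energy estimate only delivers decay at some rate $\beta<\tfrac{1}{p-1}$, which does not suffice once we multiply by $(T-t)^{-1/(p-1)}$ to recover $u$. Exploiting the spectral gap of $\mathcal{L}_0$ above its top eigenvalue, together with the pointwise smallness $|w|\le\epsilon$ that bounds the constant eigenmode (which is the one saturating the critical rate), is what forces the universal value $\epsilon=\epsilon(n,p)$ and makes the argument close.
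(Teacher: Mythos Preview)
The paper does not prove this lemma; it simply refers to Theorem 2.1 in \cite{GKcpam89}. So the question is only whether your sketch stands on its own.

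Your outline is correct up to and including the exponential decay $\|w(s)\|_{L^2_\rho}\le Ce^{-\beta s}$ and its upgrade to $L^\infty_{\mathrm{loc}}$. The gap is exactly where you flag it, and your proposed resolution does not close it. If you linearize the nonlinearity as $|G(w,\tau)|\le M\epsilon^{p-1}|w|+Me^{-p\tau/(p-1)}$ and feed this into Duhamel for $e^{s\mathcal{L}_0}$, the best you can get is a shift of the top eigenvalue by $M\epsilon^{p-1}$: Gronwall yields
\[
\|w(s)\|_{L^2_\rho}\le C\,e^{-\bigl(\tfrac{1}{p-1}-M\epsilon^{p-1}\bigr)s},
\]
and after parabolic regularity the same rate in $L^\infty(\mathbf{B}_1)$. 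Translating back, $|u(a,t)|\le C(T-t)^{-M\epsilon^{p-1}}$, which still diverges as $t\to T$, however small $\epsilon$ is. Invoking the spectral gap above $-\tfrac{1}{p-1}$ does not help here: the constant mode genuinely decays no faster than $e^{-s/(p-1)}$ under $\mathcal{L}_0$, and your perturbation term $M\epsilon^{p-1}|w|$ feeds back into it at every step. The pointwise bound $|w|\le\epsilon$ only controls the \emph{size} of that mode, not its decay \emph{rate}.

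The point you are missing is that the nonlinearity must be kept as a genuine power, not linearized. If $N(s)=\sup|w(\cdot,s)|$ (the sup taken over the expanding domain, with the boundary contribution handled by the exponentially small Gaussian tail), the comparison reads
\[
N'(s)\le -\tfrac{1}{p-1}N(s)+M\,N(s)^p+Me^{-ps/(p-1)}.
\]
Setting $N(s)=e^{-s/(p-1)}\tilde N(s)$ turns this into $\tilde N'(s)\le Me^{-s}\tilde N(s)^p+M$, and since $\int^\infty e^{-s}\,ds<\infty$ one checks that $\tilde N$ stays bounded provided $\tilde N(s_0)^{p-1}<\frac{e^{s_0}}{(p-1)M}$, which is exactly the smallness condition $\epsilon^{p-1}<\frac{1}{(p-1)M}$. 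This gives $N(s)\le Ce^{-s/(p-1)}$ and hence $|u|\le C$ near $(a,T)$. Equivalently, one can run this iteration directly on $u$ via Duhamel, improving $\epsilon$ to $C\epsilon^p$ at each step; this is essentially the argument in \cite{GKcpam89}.
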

\begin{proof} See Theorem 2.1, page 850 in Giga and Kohn \cite{GKcpam89}.
\end{proof}

Let us now give the proof of Proposition \ref{prop:wtoin}.
\begin{proof}[\textbf{Proof of Proposition \ref{prop:wtoin}}] Consider $a$ a blow-up point and write $w$ instead of $w_{a}$ for simplicity. By Lemma \ref{lemm:regupa} and equation \eqref{equ:divw1}, we see that $|w_s(y,s)| \leq C(|y| + 1)$ for some $C>0$. Therefore, $w$, $\nabla w$, $\nabla^2w$ and $w_s$ are bounded for all $|y| \leq R$ and $s \geq s'$ for some $R>0$ and $s' \in \mathbb{R}$. Let $\{s_j\}$ be a sequence tending to $+\infty$ and  $w_j(y,s) = w(y, s+ s_j)$. By the Arzela-Ascoli theorem, there is a subsequence of $s_j$ (still denoted $s_j$) such that $w_j$ converges uniformly on compact sets to some $w^{\infty}$, $\nabla w_{j} \to \nabla w^{\infty}$, $\Delta w_j \to \Delta w^\infty$ and $w_{js} \to w^{\infty}_s$. On the other hand, by $(i)$ and $(vi)$ of Proposition \ref{prop:boundEpsi}, we see that as $j \to +\infty$,
\begin{align*}
&\int_{\tilde{s}_3}^{+\infty}\int_{\mathbf{B}_R} |w_{js}|^2 dy ds = \int_{\tilde{s}_3 + s_j}^{+\infty} \int_{\mathbf{B}_R} |w_s|^2 dy ds \to 0.
\end{align*}
This implies that $w^{\infty}_s = 0$ and $w^\infty$ satisfies \eqref{equ:staSol}. Hence, by Lemma \ref{lemm:stasol}, $w^{\infty} \equiv 0$ or $w^{\infty} \equiv \pm\kappa$. \\
It remains to show that $w(\cdot,s_j) \nrightarrow 0$ as $j \to +\infty$. We proceed by contradiction. Let us  assume that $w(\cdot, s_j) \to 0$ as $j \to +\infty$. We observer that if $w(\cdot, s_j) \to 0$, then by the definition of $\mathcal{J}$ given in \eqref{equ:lyafun}, the bound of $w$ and $\nabla w$ and dominated convergence, then $\mathcal{J}[w](s_j) \to 0$. Since $\mathcal{J}$ is a Lyapunov functional, it follows that the whole sequence 
\begin{equation}\label{eq:Jtendto0}
\mathcal{J}[w](s) \to 0 \quad \text{as}\quad s \to +\infty.
\end{equation}
Let $b \in \mathbb{R}^n$, then by \eqref{equ:paraEst}, we have $w_b(y,s)$ and $\nabla w_b(y,s)$ are bounded for all $y \in \mathbb{R}^n$ and $s \geq s'$. We now use the interpolation inequality which reads
$$|w_b(0,s)| \leq C\left(\|w_b\|_{L^2(\mathbf{B}_R)}^\theta\|\nabla w_b\|^{1-\theta}_{L^\infty(\mathbf{B}_R)} + \|w_b\|_{L^2{(\mathbf{B}_R)}}\right),$$
where $\theta \in (0, \frac{2}{n+2})$ if $n \geq 2$ and $\theta = 1/2$ if $n = 1$.\\
By Lemma \ref{lemm:cri}, we see that $\|w_b(s)\|_{L^2(\mathbf{B}_R)} \leq C(p)\big(\mathcal{J}[w_b](s)\big)^{\frac{1}{p+1}}$ for all $s \geq \tilde{s}_1$. Hence, 
$$|w_b(0,s)| \leq C' \left(\big(\mathcal{J}[w_b](\tilde{s}_1)\big)^{\frac{\theta}{p+1}} + \big(\mathcal{J}[w_b](\tilde{s}_1)\big)^{\frac{1}{p+1}}\right), \quad \forall s \geq \tilde{s}_1.$$
Consider some $\epsilon > 0$ small. From \eqref{eq:Jtendto0}, there is $s'(\epsilon)$ such that  $\mathcal{J}[w](s) \leq \epsilon $ for all $s \geq s'(\epsilon)$. Therefore, by continuity depending of $\mathcal{J}[w_b](s)$ on $b$ and the monotonicity of $\mathcal{J}[w_b](s)$ in time $s$, we infer that $\mathcal{J}[w_b](s) \leq 2\epsilon$ for all $s \geq s'$ and $|b-a|$ small. This implies that $|w_b(0,s)| \leq \epsilon''$ for all $s \geq s'$, or $|u(b,t)| \leq \epsilon''(T-t)^{-\frac{1}{p-1}}$ for $(b,t)$ close to $(a,T)$, where $\epsilon'' = \epsilon''(\epsilon) \to 0$ as $\epsilon \to 0$. Thus, $a$ is not a blow-up point by Lemma \ref{lemm:nonde}, and this is a contradiction. Therefore, this concludes the proof of Proposition \ref{prop:wtoin} and the proof of Theorem \ref{theo:wtoin} also.
\end{proof}

\section{Classification of the behavior of $w$ as $s \to +\infty$ in $L_\rho^2$} \label{sec:refasy}
This section is devoted to the proof of Theorem \ref{theo:refinedasymptotic}. Consider $a$ a blow-up point and write $w$ instead of $w_a$ for simplicity. From Theorem \ref{theo:wtoin} and up to changing the signs of $w$ and $h$, we may assume that 
$\|w(y,s) -\kappa\|_{L^2_\rho} \to 0$ as $s \to +\infty$, uniformly on compact subsets of $\mathbb{R}^n$. As mentioned in the introduction, by setting $v(y,s) = w(y,s) - \phi(s)$ ($\phi$ is a positive solution of \eqref{equ:phiODE} such that $\phi(s) \to \kappa$ as $s \to + \infty$), we see that $\|v(y,s)\|_{L^2_\rho} \to 0$ as $s \to + \infty$ and $v$ solves the following equation: 
\begin{equation}\label{equ:v}
\partial_s v = (\mathcal{L} + \omega(s))v + F(v) + H(v,s), \quad \forall y \in \mathbb{R}^n,\; \forall s \in [-\log T, +\infty),
\end{equation}
where $\mathcal{L} = \Delta - \frac{y}{2}\cdot \nabla + 1$ and $\omega$, $F$, $H$ are given by
\begin{align*}
&\omega(s) = p\left(\phi^{p-1} - \kappa^{p-1}\right)+ e^{-s}h'\left(e^\frac{s}{p-1}\phi\right),\\
&F(v) = |v+ \phi |^{p-1}(v+\phi) - \phi^p - p\phi^{p-1}v, \\
&H(v,s) = e^{-\frac{ps}{p-1}}\left[h\left(e^\frac{s}{p-1}(v+\phi)\right)-h\left(e^\frac{s}{p-1}\phi\right) - e^\frac{s}{p-1}h'\left(e^\frac{s}{p-1}\phi\right)v\right].
\end{align*}
We remark from \eqref{equ:solphi}  and \eqref{equ:estimateH} that
\begin{equation}\label{equ:omega}
|\omega(s)| = \mathcal{O}\left(\frac{1}{s^a}\right) \quad \text{as} \quad s \to +\infty.
\end{equation}
Let us introduce for all $y \in \mathbb{R}^n$, for all $s \in [-\log T, +\infty)$,
\begin{equation}\label{def:V}
\beta(s) = e^{-\int_s^{+\infty}\omega(\tau)d\tau} \quad \text{and} \quad V(y,s) = \beta(s)v(y,s),
\end{equation} 
(note that $\beta(s) \to 1$ as $s \to +\infty$).\\
By multiplying equation \eqref{equ:v} to $\beta(s)$, we find the following equation satisfied by $V$:
\begin{equation}\label{equ:V}
\partial_s V = \mathcal{L}V + \bar{F}(V,s), \quad \forall y \in \mathbb{R}^n,\; \forall s \in [-\log T, +\infty),
\end{equation}
where $\bar{F}(V,s) = \beta(s)(F(v) + H(v,s))$ satisfying 
\begin{equation}\label{equ:estFbar2}
|\bar{F}(V,s)| \leq CV^2.
\end{equation}
Since $\|w(s)\|_{L^\infty} \leq C$ from Theorem \ref{theo:blrate}, we may use a Taylor expansion, \eqref{equ:estimateH}, \eqref{equ:solphi} and the fact that $\beta(s) = 1 + \mathcal{O}\left(\frac{1}{s^{a-1}}\right)$ as $s \to +\infty$ to write
\begin{equation}\label{rem:boundF_H}
\left|\bar{F}(V,s) - \frac{p}{2\kappa}V^2 \right| = \mathcal{O}(|V|^3) + \mathcal{O}\left(\frac{V^2}{s^{a-1}}\right) \quad \text{as $s \to +\infty$},
\end{equation}
(see Lemma \ref{ap:lemmbounF_H} for the proof of \eqref{rem:boundF_H}, and note that \eqref{equ:estFbar2} follows from \eqref{rem:boundF_H}).\\
\noindent Since the eigenfunctions of $\mathcal{L}$ constitute a total orthonormal family of $L^2_\rho$, we can expand $V$ as follows: 
\begin{equation}\label{equ:expv}
V(y,s) = \sum_{k = 1}^\infty \pi_k(V)(y,s) =  V_{+}(y,s) + V_{null}(y,s) + V_{-}(y,s),
\end{equation}
where $\pi_k(V)$ is the orthogonal projector of $v$ on the eigenspace associated to $\lambda_k = 1 - \frac{k}{2}$, 
\begin{align}
V_{+}(y,s) & = \pi_+(V)(y,s) = \sum_{k=0}^1\pi_k(V)(y,s),\nonumber\\
V_{-}(y,s) &= \pi_-(V)(y,s) = \sum_{k=3}^\infty\pi_{k}(V)(y,s),\nonumber\\
V_{null}(y,s) &=\pi_2(V)(y,s) = V_2(s)\centerdot H_2(y),\label{equ:defpi101}
\end{align}
where $H_2(y) = \left(H_{2,ij}, i \leq j\right)$, with $H_{2,ii} = h_2(y_i)$ and $H_{2,ij} = h_1(y_i)h_1(y_j)$ if $i \ne j$, $h_m$ is introduced in \eqref{equ:eigenfu}; $V_2(s) = \left(V_{2,ij}, i \leq j\right)$, with $V_{2,ij}$ being the projection of $V$ on $H_{2,ij}$.\\

\noindent We claim that Theorem \ref{theo:refinedasymptotic} is a direct consequence of the following:
\begin{prop}[\textbf{Classification of the behavior of $V$ as $s \to +\infty$}]\label{prop:refinedasymptotic} One of the following possibilities occurs:\\
$i)\;$ $V(y,s) \equiv 0$,\\
$ii)$ There exists $l \in \{1, \dots, n\}$ such that up to an orthogonal transformation of coordinates, we have
$$
V(y,s) = -\frac{\kappa}{4ps} \left(\sum_{j=1}^l y_j^2 - 2l\right) + \mathcal{O}\left(\frac{1}{s^a} \right) + \mathcal{O}\left(\frac{\log s}{s^2}\right) \quad \text{as} \quad s \to +\infty.
$$
$iii)$ There exist an integer number $m \geq 3$ and constants $c_\alpha$ not all zero such that 
$$V(y,s) = - e^{\left(1 - \frac{m}{2}\right)s} \sum_{|\alpha| = m}c_\alpha H_\alpha(y) + o\left(e^{\left(1 - \frac{m}{2}\right)s}\right)\quad \text{as} \quad s \to +\infty.$$
The convergence takes place in $L^2_\rho$ as well as in $\mathcal{C}^{k,\gamma}_{loc}$ for any $k \geq 1$ and $\gamma \in (0,1)$.
\end{prop}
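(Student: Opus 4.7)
I would carry out a spectral analysis of \eqref{equ:V} in the style of Filippas--Liu \cite{FLaihn93} and Vel\'azquez \cite{VELtams93}, treating the extra term $\omega(s)V$, the cubic tail of $\bar F(V,s)$ and the $\mathcal{O}(V^2/s^{a-1})$ correction from \eqref{rem:boundF_H} as perturbations that only alter sub-leading terms. The operator $\mathcal{L}$ has positive eigenvalues $1,1/2$ (contributing to $V_+$), the zero eigenvalue (contributing to $V_{null}$) and eigenvalues $\leq -1/2$ on $V_-$, so the structure is the classical unstable/neutral/stable trichotomy.

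\textbf{Step 1: spectral reduction and trichotomy.} Project \eqref{equ:V} onto the eigenspaces of $\mathcal{L}$. Using self-adjointness in $L^2_\rho$ and the quadratic bound \eqref{equ:estFbar2}, one obtains
\begin{align*}
\pi_+(V)'(s) &= \Lambda_+\,\pi_+(V) + B_+(s),\\
V_2'(s) &= B_0(s),\\
\pi_-(V)'(s) &= \Lambda_-\,\pi_-(V) + B_-(s),
\end{align*}
with each $|B_\star(s)| \leq C\,\|V(s)\|_{L^2_\rho}^2$. Since $\|V(s)\|_{L^2_\rho}\to 0$, a Gronwall argument on successive unit time windows (the standard stable-manifold mechanism) shows that $\|\pi_+(V)(s)\|$ is controlled by the square of $\|V_{null}(s)\|+\|\pi_-(V)(s)\|$: the positive modes cannot be dominant. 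Hence either $V\equiv 0$ for $s$ large (case $(i)$), or $V_{null}$ strictly dominates $\pi_+(V)$ and $\pi_-(V)$ (case $(ii)$), or $V_{null}\equiv 0$ for $s$ large and $\pi_-(V)$ strictly dominates $\pi_+(V)$ (case $(iii)$).

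\textbf{Step 2: case $(ii)$, the null-mode matrix ODE.} Identify $V_{null}$ with the symmetric matrix $V_2(s)\in\mathbb{R}^{n\times n}_{\mathrm{sym}}$ and insert \eqref{rem:boundF_H} into the second equation of Step 1. Projecting $\tfrac{p}{2\kappa}V^2$ against $H_{2,ij}$ by the Gaussian moment relations, together with $H_{2,ii}(y)=y_i^2-2$ and $H_{2,ij}(y)=y_iy_j$ for $i<j$, yields the matrix ODE
\begin{equation*}
V_2'(s) = -\tfrac{2p}{\kappa}\,V_2(s)^2 + R(s),\qquad |R(s)|=\mathcal{O}(\|V\|_{L^2_\rho}^3)+\mathcal{O}(s^{-a}\,\|V_2\|).
\end{equation*}
A scalar Bernoulli analysis suggests $V_2(s)\sim -\tfrac{\kappa}{2ps}\,\Pi$ with $\Pi^2=\Pi$, and a fixed-point argument on $sV_2(s)$ shows that $\Pi$ is indeed an orthogonal projector of some integer rank $l\in\{1,\dots,n\}$. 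Choosing coordinates that diagonalize $\Pi$ and reassembling $V_{null}(y,s)=\sum_{i\leq j}V_{2,ij}(s)H_{2,ij}(y)$ produces the claimed $-\tfrac{\kappa}{4ps}(\sum_{j=1}^l y_j^2-2l)$. The $\mathcal{O}(s^{-a})$ correction reflects the contribution of $\omega(s)V$ and the $V^2/s^{a-1}$ tail, while the $\mathcal{O}(\log s/s^2)$ correction appears at the next bootstrap iterate, exactly as in \cite{FLaihn93}.

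\textbf{Step 3: case $(iii)$, regularity upgrade, and main obstacle.} When $V_{null}\equiv 0$ eventually, the stable equation is almost linear since the quadratic term sits at the rate $e^{-2(m/2-1)s}$, which is negligible compared with its linear counterpart. Selecting the smallest $m\geq 3$ for which $\pi_m(V)\not\equiv 0$, standard stable-manifold arguments produce $V(y,s)=-e^{(1-m/2)s}\sum_{|\alpha|=m}c_\alpha H_\alpha(y)+o\bigl(e^{(1-m/2)s}\bigr)$ in $L^2_\rho$. The $\mathcal{C}^{k,\gamma}_{loc}$ statement follows from the uniform-in-$s$ bounds of Lemma \ref{lemm:regupa} combined with interior parabolic Schauder estimates applied to \eqref{equ:V}. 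The genuine difficulty of the whole proof lies in case $(ii)$: one must run the matrix-ODE reduction for $V_2$ while absorbing the non-autonomous $\omega(s)V$ and $V^2/s^{a-1}$ perturbations without spoiling the $1/s$ rate, then show that the limit matrix is an orthogonal projector of integer rank $l$, and finally perform a secondary bootstrap to isolate the $\mathcal{O}(s^{-a})+\mathcal{O}(\log s/s^2)$ subleading terms. The trichotomy of Step 1 also requires a careful quantitative estimate bounding $\pi_+(V)$ by the squares of the other two projections on each unit time interval.
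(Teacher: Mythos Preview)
Your overall architecture matches the paper's (the Filippas--Liu / Vel\'azquez spectral trichotomy followed by a matrix ODE on the null mode and a semigroup expansion on the stable part), but several steps are stated in a form that would not close. First, your Step~1 dichotomy is miswritten: the alternative to ``$V_{null}$ dominates'' is \emph{not} ``$V_{null}\equiv 0$ for $s$ large'', but rather that $\|V_+(s)\|_{L^2_\rho}+\|V_{null}(s)\|_{L^2_\rho}=o(\|V_-(s)\|_{L^2_\rho})$, which in turn forces $\|V(s)\|_{L^2_\rho}=\mathcal{O}(e^{-\mu s})$ for some $\mu>0$ (Proposition~\ref{prop:reducetovnull}). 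Case $(i)$ and case $(iii)$ are then separated \emph{after} this exponential bound, by a Duhamel bootstrap (doubling $\mu$ repeatedly) together with a unique-continuation result of Herrero--Vel\'azquez stating that $\|V(s)\|_{L^2_\rho}=\mathcal{O}(e^{-Rs})$ for all $R>0$ forces $V\equiv 0$. You do not mention this last ingredient, and without it you cannot reach $(i)$.

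Second, and more seriously, your projected bounds $|B_\star(s)|\leq C\|V(s)\|_{L^2_\rho}^2$ are not available as written: the $L^2_\rho$-projection of $\bar F(V,s)\sim \tfrac{p}{2\kappa}V^2$ onto $H_\alpha$ is controlled by $\|V\|_{L^4_\rho}^2$ (with a polynomial weight), not by $\|V\|_{L^2_\rho}^2$. The paper handles this in Step~1 by the Filippas--Kohn device of enlarging the ``stable'' quantity to $\bar Y(s)=\|V_-(s)\|_{L^2_\rho}+\big\||y|^{k/2}V^2(s)\big\|_{L^2_\rho}$ (Lemma~\ref{lemm:ODEsys}), and in Step~2 by the Herrero--Vel\'azquez delay regularization (Lemma~\ref{lemm:HVslow}), which converts the $1/s$ rate of $\|V\|_{L^2_\rho}$ into the same rate for $\|V\|_{L^r_\rho}$, $r>2$; only then does the sharpened matrix ODE $A'=\tfrac{4p}{\kappa}A^2+\mathcal{O}(s^{-3})+\mathcal{O}(s^{-a-1})$ (Proposition~\ref{prop:newEqvnull}) follow, and with it the $\mathcal{O}(s^{-a})+\mathcal{O}(\log s/s^2)$ remainder via the eigenvalue ODE and Lemma~\ref{lemm:elementary}. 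Finally, note that equation~\eqref{equ:V} no longer contains an $\omega(s)V$ term (it was absorbed by the factor $\beta(s)$ in \eqref{def:V}), so that perturbation should not appear in your analysis; and your matrix ODE has the wrong sign/coefficient: the paper obtains $A'=\tfrac{4p}{\kappa}A^2+\ldots$ with $A(s)\to -\tfrac{\kappa}{4ps}A_l$, consistent with the stated profile.
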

\begin{rema}
Let us insist on the fact that the linearizing of $w$ around $\kappa$ would generate some terms of the size $\frac{1}{s^a}$, and prevent us from reaching exponentially small terms. \end{rema}
\noindent Let us first derive Theorem \ref{theo:refinedasymptotic} assuming Proposition \ref{prop:refinedasymptotic} and then we will prove it later.
\begin{proof}[\textbf{Proof of Theorem \ref{theo:refinedasymptotic} assuming that Proposition \ref{prop:refinedasymptotic} holds}] By the definition \eqref{def:V} of $V$, we see that $i)$ of Proposition \ref{prop:refinedasymptotic} directly follows that $v(y,s) \equiv \phi(s)$ which is $i)$ of Theorem \ref{theo:refinedasymptotic}. Using $ii)$ of Proposition \ref{prop:refinedasymptotic} and the fact that $\beta(s) = 1 + \mathcal{O}(\frac{1}{s^{a-1}})$ as $s \to +\infty$, we see that as $s \to +\infty$,
\begin{align*}
w(y,s) &= \phi(s) + V(y,s)\left(1 + \mathcal{O}(\frac{1}{s^{a-1}})\right)\\
&= \phi(s) -\frac{\kappa}{4ps} \left(\sum_{j=1}^l y_j^2 - 2l\right) + \mathcal{O}\left(\frac{1}{s^a} \right) + \mathcal{O}\left(\frac{\log s}{s^2}\right)\\
&=\kappa -\frac{\kappa}{4ps} \left(\sum_{j=1}^l y_j^2 - 2l\right) + \mathcal{O}\left(\frac{1}{s^a} \right) + \mathcal{O}\left(\frac{\log s}{s^2}\right),
\end{align*}
which yields $ii)$ of Theorem \ref{theo:refinedasymptotic}.\\
Using $iii)$ of Proposition \ref{prop:refinedasymptotic} and again the fact that $\beta(s) = 1 + \mathcal{O}(\frac{1}{s^{a-1}})$ as $s \to +\infty$, we have 
$$w(y,s) = \phi(s)- e^{\left(1 - \frac{m}{2}\right)s} \sum_{|\alpha| = m}c_\alpha H_\alpha(y) + o\left(e^{\left(1 - \frac{m}{2}\right)s}\right)\quad \text{as} \quad s \to +\infty.$$
This concludes the proof of Theorem \ref{theo:refinedasymptotic} assuming that Proposition \ref{prop:refinedasymptotic} holds.
\end{proof}
\noindent The proof of Proposition \ref{prop:refinedasymptotic} will be very close to that in \cite{FLaihn93} and \cite{VELtams93}, thanks to \eqref{equ:estFbar2} and \eqref{rem:boundF_H}. It happens that the proofs written in Filippas, Kohn, Liu, Herrero and Vel\'azquez \cite{FKcpam92},\cite{FLaihn93}, \cite{HVaihn93}, \cite{VELtams93} in the unperturbed case ($h \equiv 0$) hold for equation \eqref{equ:V} under the general assumptions \eqref{equ:estFbar2} and \eqref{rem:boundF_H}. For that reason, we only give the sketch of the proof below and refer to these papers for details of the proofs. \\
\noindent Following \cite{FLaihn93} and \cite{VELtams93}, we divide the proof into 3 steps which are given in separated subsections:\\
- Step 1: deriving the fact that either  $\|V_+(s)\|_{L^2_\rho} + \|V_-(s)\|_{L^2_\rho} = o\left(\|V_{null}(s)\|_{L^2_\rho} \right)$, or $\|V(s)\|_{L^2_\rho} = \mathcal{O}(e^{-\mu s})$ for some $\mu > 0$. \\
- Step 2: assuming that $\|V(y,s)\|_{L^2_\rho} \sim \|V_{null}(y,s)\|_{L^2_\rho}$, we find an equation satisfied by $V_{null}(s)$ as $s \to +\infty$. Solving this equation, we find that $\|V(s)\|_{L^2_\rho}$ behaves like $\frac{1}{s}$ as $s \to +\infty$. Using this information, we can get a more accurate equation for $V_{null}(s)$ as $s \to + \infty$ and then $ii)$ of Proposition \ref{prop:refinedasymptotic} follows.\\
- Step 3: assuming $\|V(s)\|_{L^2_\rho} = \mathcal{O}(e^{-\mu s})$ for some $\mu > 0$ as $s \to +\infty$, we derive $i)$ or $iii)$ of  Proposition \ref{prop:refinedasymptotic}.

\subsection{Finite dimension reduction of the problem.}
\noindent We claim the following proposition: 
\begin{prop}[\textbf{Competition between $V_+, V_-$ and $V_{null}$}] \label{prop:reducetovnull} As $s \to +\infty$,
\begin{align}
\text{either} \quad & i) \; \|V(s)\|_{L^2_\rho} = \mathcal{O}\left(e^{-\mu s}\right),\quad \text{for some $\mu > 0$,} \qquad \qquad \qquad\label{equ:expo}\\
\text{or} \quad & ii) \;\|V_{+}(s)\|_{L^2_\rho}+ \|V_{-}(s)\|_{L^2_\rho} = o\left(\|V_{null}(s)\|_{L^2_\rho}\right). \qquad \qquad \label{equ:reduce2vnull}
\end{align}
\end{prop}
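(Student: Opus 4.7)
My plan is to project equation \eqref{equ:V} onto the three $\mathcal{L}$-invariant subspaces corresponding to the positive, zero and negative parts of $\mathrm{spec}(\mathcal{L})$, derive a coupled system of differential inequalities for the $L^2_\rho$-norms of $V_+$, $V_{null}$ and $V_-$, and then apply a trichotomy (competition) argument \`a la Filippas--Kohn \cite{FKcpam92} and Vel\'azquez \cite{VELtams93}. The point is that $\mathcal{L}$ is self-adjoint on $L^2_\rho$ with a spectral gap of $\tfrac{1}{2}$ above and below the neutral eigenvalue $0$: the unstable mode $V_+$ should grow at rate at least $\tfrac{1}{2}$, the stable mode $V_-$ should decay at rate at least $\tfrac{1}{2}$, and the neutral mode $V_{null}$ is driven only by the quadratic error $\bar F$.

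First I would apply $\pi_+$, $\pi_{null}$, $\pi_-$ to \eqref{equ:V} to obtain
\[
\partial_s V_+ = \mathcal{L}V_+ + \pi_+(\bar F),\qquad \partial_s V_{null} = \pi_{null}(\bar F),\qquad \partial_s V_- = \mathcal{L}V_- + \pi_-(\bar F),
\]
with $\mathcal{L}|_{V_+} \geq \tfrac{1}{2}I$ and $\mathcal{L}|_{V_-} \leq -\tfrac{1}{2}I$. Using \eqref{equ:estFbar2} together with the $L^\infty$ bound on $V$ that comes from Theorem \ref{theo:blrate} and \eqref{def:V}, each projection of the nonlinear term satisfies $\|\pi_k(\bar F)\|_{L^2_\rho} \leq C\eta(s)\|V(s)\|_{L^2_\rho}$, where $\eta(s) := \|V(s)\|_{L^\infty} \to 0$ as $s \to +\infty$ by Theorem \ref{theo:wtoin}. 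Writing $\alpha(s) = \|V_+(s)\|_{L^2_\rho}$, $\beta(s) = \|V_{null}(s)\|_{L^2_\rho}$, $\gamma(s) = \|V_-(s)\|_{L^2_\rho}$ and differentiating each norm (taking scalar products with $V_\pm$, $V_{null}$) yields
\[
\alpha' \geq \tfrac{1}{2}\alpha - C\eta(\alpha+\beta+\gamma),\qquad |\beta'| \leq C\eta(\alpha+\beta+\gamma),\qquad \gamma' \leq -\tfrac{1}{2}\gamma + C\eta(\alpha+\beta+\gamma).
\]

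The trichotomy then proceeds as follows. Suppose \eqref{equ:reduce2vnull} fails. Because $\eta(s) = o(1)$, a standard invariant-region argument (see \cite{FKcpam92}, \cite{VELtams93}) shows that for $s$ large, either $\alpha$ eventually dominates $\beta+\gamma$, or $\gamma$ eventually dominates $\alpha+\beta$. In the first case $\alpha' \geq (\tfrac{1}{2}-o(1))\alpha$, forcing exponential growth of $\alpha$, which contradicts $\alpha \leq \|V\|_{L^2_\rho} \to 0$ (a consequence of Theorem \ref{theo:wtoin} and \eqref{def:V}). In the second case $\gamma' \leq -(\tfrac{1}{2}-o(1))\gamma$ and $\|V\|_{L^2_\rho} \leq 2\gamma$, giving the exponential decay \eqref{equ:expo}. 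The main obstacle is precisely this invariance-of-cones analysis: one has to verify that the quadratic coupling through $\bar F$ is genuinely absorbed by the spectral gap of $\mathcal{L}$, which works only because $\eta(s) \to 0$, provided by Theorems \ref{theo:blrate} and \ref{theo:wtoin}. A minor side check is that the pointwise bound \eqref{equ:estFbar2} transfers to an $L^2_\rho$ estimate on each $\pi_k(\bar F)$; this follows at once from $\|\bar F\|_{L^2_\rho} \leq C\|V\|_{L^\infty}\|V\|_{L^2_\rho}$ and the $L^2_\rho$-boundedness of the projectors, so no regularity beyond what is already available is required.
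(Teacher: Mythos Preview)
There is a genuine gap in your argument. You assert that $\eta(s) := \|V(s)\|_{L^\infty(\mathbb{R}^n)} \to 0$ ``by Theorem \ref{theo:wtoin}'', and this is what makes the coupling term $C\eta(\alpha+\beta+\gamma)$ absorbable by the spectral gap. But Theorem \ref{theo:wtoin} only gives convergence $w \to \kappa$ (hence $V \to 0$) in $L^2_\rho$ and uniformly on \emph{compact} subsets of $\mathbb{R}^n$; Theorem \ref{theo:blrate} only gives a uniform bound $\|V\|_{L^\infty} \leq C$. Nothing available at this stage of the paper yields $\|V(s)\|_{L^\infty(\mathbb{R}^n)} \to 0$. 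Consequently your estimate $\|\bar F\|_{L^2_\rho} \leq C\|V\|_{L^\infty}\|V\|_{L^2_\rho}$ is correct but useless: it produces a coupling constant that is merely bounded, not $o(1)$, and the trichotomy argument breaks down.

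The way the paper (following Filippas--Kohn \cite{FKcpam92}) circumvents this is \emph{not} to bound $\|\bar F\|_{L^2_\rho}$ by $o(1)\|V\|_{L^2_\rho}$ directly, but to enlarge the stable component by an auxiliary quantity $r(s) = \big\|\,|y|^{k/2}V^2(s)\big\|_{L^2_\rho}$ for a suitable fixed integer $k$, and to work with $\bar Y(s) = Y(s) + r(s)$ in place of $Y(s) = \|V_-\|_{L^2_\rho}$. One then shows that the enlarged triple $(Z,X,\bar Y)$ satisfies the same differential inequalities with an arbitrarily small $\epsilon$ for $s$ large (this is Lemma \ref{lemm:ODEsys}, whose proof is exactly Theorem~A in \cite{FKcpam92} and uses only $|\bar F|\leq CV^2$). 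The point is that $r(s)$ absorbs the quadratic nonlinearity and itself obeys a damped inequality, so the ODE lemma (Lemma \ref{lemm:ODEinFK92}) applies to $(Z,X,\bar Y)$ and yields the dichotomy. In short, your projection/competition strategy is the right one, but the smallness of the nonlinear coupling cannot come from $\|V\|_{L^\infty}\to 0$; it has to be built into the ODE system via the extra variable $r(s)$.
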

\begin{proof}
Let us denote 
\begin{equation}\label{equ:difzxy}
Z(s) = \|V_{+}(s)\|_{L^2_\rho}, \quad X(s) = \|V_{null}(s)\|_{L^2_\rho}, \quad Y(s) = \|V_{-}(s)\|_{L^2_\rho},
\end{equation}
then the following lemma is claimed:
\begin{lemm} \label{lemm:ODEsys} 
Let $\epsilon > 0$, there exists $s^* = s^*(\epsilon) \in \mathbb{R}$ such that for all $s \geq s^*$,
\begin{align*}
Z' &\geq \left(\frac{1}{2} - \epsilon \right)Z- \epsilon(X + \bar{Y})\\
\left|X'\right| &\leq \epsilon (X + \bar{Y} + Z )\\
\bar{Y}' &\leq - \left(\frac{1}{2} - \epsilon \right)\bar{Y} + \epsilon\left(X + Z\right)
\end{align*}
where $\bar{Y}(s) = Y(s) + r(s)$ with $r(s) = \left\| |y|^{\frac{k}{2}} V^2(s)\right\|_{L^2_\rho}$ for a fixed integer $k$.
\end{lemm}
\begin{proof} From the fact that $|\bar{F}(V,s)| \leq CV^2$ for $s$ large, the proof is the same as the proof of Theorem A, pages 842-847 in Filippas and Kohn \cite{FKcpam92}. \end{proof}

\noindent The following lemma allows us to conclude Proposition \ref{prop:reducetovnull}: 
\begin{lemm}\label{lemm:ODEinFK92} Let $\xi(t), \nu(t), \zeta(t)$ be absolutely continuous, real-valued functions that are nonnegative and satisfy:\\
$i)$ $(\xi(t), \nu(t), \zeta(t)) \to 0$ as $t \to +\infty$,\\
$ii)$ For all $\epsilon > 0$, there exists $t_0 \in \mathbb{R}$ such that for all $t \geq t_0$,
\begin{align*}
\zeta' &\geq c_0\zeta - \epsilon (\xi + \nu)\\
|\xi'| & \leq \epsilon(\xi + \nu + \zeta)\\
\nu' & \leq -c_0\nu + \epsilon(\xi + \zeta),
\end{align*}
for some $c_0 > 0$.\\
Then either $\xi + \zeta = o(\nu)$ or $\nu + \zeta = o(\xi)$ as $t \to +\infty$.
\end{lemm}
\begin{rema} In the first case, we clearly see that $\nu' \leq -\frac{c_0}{2}\nu$ for $t$ large, hence $\xi, \upsilon, \zeta $ tend to zero exponentially fast.
\end{rema}
\begin{proof} The original proof is due to Filippas and Kohn \cite{FKcpam92}. For this particular statement, see Lemma A.1, page 3425 \cite{NZams10} for the proof.
\end{proof}

Since $\|V(s)\|_{L^\infty_{loc}} \to 0$ as $s \to +\infty$, we have $X(s), \bar{Y}(s), Z(s) \to 0 $ as $s \to +\infty$. Thus, Lemma \ref{lemm:ODEinFK92} applies to $X(s), \bar{Y}(s),$ and $Z(s)$ and yields the desired result (use the remark after the statement). This ends the proof of Proposition \ref{prop:reducetovnull}.
\end{proof}
\subsection{Deriving conclusion $ii)$ of Proposition \ref{prop:refinedasymptotic}}
In this part, we recall from Filippas and Liu the proof of $ii)$ of Proposition \ref{prop:refinedasymptotic}. We focus on the case $ii)$ of Proposition \ref{prop:reducetovnull}, namely that
\begin{equation}\label{equ:v_vnull2}
\|V_{+}(s)\|_{L^2_\rho}+ \|V_{-}(s)\|_{L^2_\rho} = o\left(\|V_{null}(s)\|_{L^2_\rho}\right) \quad \text{as} \quad s \to +\infty,
\end{equation}
and show that it leads to case $ii)$ of Proposition \ref{prop:refinedasymptotic}.\\
We first claim the following proposition:
\begin{prop}[\textbf{An ODE satisfied by $V_{null}(s)$ as $s \to +\infty$}] \label{prop:ptvnull} If $\|V_{+}(s)\|_{L^2_\rho}+ \|V_{-}(s)\|_{L^2_\rho} = o\left(\|V_{null}(s)\|_{L^2_\rho}\right)$, then\\
$i)$ for all $i,j \in\{1, ...,n\}$ and as $s \to +\infty$, 
\begin{equation}\label{equ:v_2ijH2ij}
V_{2,ij}'(s) = \frac{p}{2\kappa} \int_{\mathbb{R}}V_{null}^2(y,s) \frac{H_{2,ij}(y)}{\|H_{2,ij}(y) \|_{L^2_\rho}^2}\rho(y) dy + o\left(\|V_{null}(s)\|^2_{L^2_\rho}\right).
\end{equation}
$ii)$ There exist a symmetric $n\times n$ matrix $A(s)$ such that for all $ s \in \mathbb{R}$,
\begin{align}
&V_{null}(y,s) = y^TA(s)y - 2tr(A(s))\nonumber\\
\text{and}\;\; &c_1\|A(s)\| \leq \|V_{null}(s)\|_{L^2_\rho} \leq c_2\|A(s)\|\label{equ:defVnull}
\end{align}
where $c_1, c_2$ are some positive constant and $\|A\|$ stands for any norm on the space of $n \times n$ symmetric matrices. Moreover, 
\begin{equation}\label{equ:odeforA}
A'(s)= \frac{4p}{\kappa}A^2(s)  + o\left(\|A(s)\|^2\right) \quad \text{as} \quad s \to +\infty.
\end{equation}
\end{prop}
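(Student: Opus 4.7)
The plan is to project equation \eqref{equ:V} onto the null eigenspace of $\mathcal{L}$ (spanned by the quadratic Hermite polynomials $H_{2,ij}$), exploit the explicit polynomial structure of that eigenspace, and quantify the nonlinearity using \eqref{rem:boundF_H}. The whole argument follows the route laid out by Filippas, Kohn, Liu and Vel\'azquez for the unperturbed problem, since \eqref{equ:estFbar2}--\eqref{rem:boundF_H} guarantees that our perturbed $\bar{F}(V,s)$ behaves like the unperturbed quadratic nonlinearity up to harmless errors.

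For part $(i)$, since $\mathcal{L}H_{2,ij}=0$, taking the $L^2_\rho$-inner product of \eqref{equ:V} against $H_{2,ij}/\|H_{2,ij}\|^2_{L^2_\rho}$ kills the linear part and yields $V_{2,ij}'(s) = \int_{\mathbb{R}^n} \bar{F}(V,s)\,H_{2,ij}/\|H_{2,ij}\|_{L^2_\rho}^2\,\rho\,dy$. I then insert \eqref{rem:boundF_H} and decompose $V = V_{null} + \tilde V$ with $\tilde V = V_+ + V_-$, so that $V^2 = V_{null}^2 + 2V_{null}\tilde V + \tilde V^2$. The slowly-decaying remainder $\mathcal{O}(V^2/s^{a-1})$ contributes $o(\|V\|^2_{L^2_\rho}) = o(\|V_{null}\|^2_{L^2_\rho})$ by \eqref{equ:v_vnull2}; the cubic remainder is absorbed using the global $L^\infty$ bound from Theorem \ref{theo:blrate} together with the $L^\infty_{loc}$ decay from Theorem \ref{theo:wtoin}. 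For the cross term $\int V_{null}\tilde V H_{2,ij}\rho$, I use Cauchy-Schwarz: $V_{null}H_{2,ij}$ lies in the finite-dimensional space of polynomials of degree at most four, hence $\|V_{null}H_{2,ij}\|_{L^2_\rho} \leq C\|V_{null}\|_{L^2_\rho}$, and \eqref{equ:v_vnull2} then yields the desired $o(\|V_{null}\|^2_{L^2_\rho})$ bound. The remaining piece $\int \tilde V^2 H_{2,ij}\rho$ is handled by splitting into $\{|y|\le R\}$, where the $L^\infty_{loc}$-smallness of $V$ and the finite-dimensional structure of $V_+$ apply, and $\{|y|>R\}$, where Gaussian tails of $H_{2,ij}\rho$ dominate, then optimizing $R=R(s)$.

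For part $(ii)$, the null eigenspace of $\mathcal{L}$ is exactly the span of $\{H_{2,ij}\}_{i\leq j}$ with $H_{2,ii}(y) = y_i^2-2$ and $H_{2,ij}(y) = y_iy_j$ for $i<j$. Setting $A_{ii}(s) = V_{2,ii}(s)$ and $A_{ij}(s) = \tfrac12 V_{2,ij}(s)$ for $i<j$ defines a symmetric matrix satisfying $V_{null}(y,s) = y^T A(s) y - 2\,\mathrm{tr}(A(s))$; the two-sided bound on $\|V_{null}\|_{L^2_\rho}$ follows from equivalence of norms on this finite-dimensional polynomial space. Assembling the component ODEs from $(i)$ into matrix form then reduces \eqref{equ:odeforA} to an explicit Gaussian-moment computation: expanding $V_{null}^2 = (y^T A y - 2\,\mathrm{tr}(A))^2$, projecting each resulting monomial onto $\{H_{2,ij}\}$ using $\int y_i^4\rho = 12$, $\int y_i^2y_j^2\rho = 4$ for $i\neq j$ and vanishing of odd moments, and reading off the $(i,j)$-entry of $\frac{4p}{\kappa}A^2$ on the right-hand side.

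I expect the main obstacle to be the control of $\int \tilde V^2 H_{2,ij}\rho$ at the order $o(\|V_{null}\|^2_{L^2_\rho})$ rather than the naive $o(\|V_{null}\|_{L^2_\rho})$ given by direct Cauchy-Schwarz, since $H_{2,ij}$ has polynomial growth and $\tilde V$ has no finite-dimensional structure. The two-region split with the optimization of $R$ is the one place where the quadratic character of the nonlinearity, the pointwise smallness of $V$ on compact sets, and the Gaussian weight $\rho$ must all be used simultaneously, and it reproduces in our perturbed setting the technical heart of Filippas-Liu.
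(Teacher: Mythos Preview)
Your overall architecture matches the paper's: project \eqref{equ:V} onto the null eigenspace using $\mathcal{L}H_{2,ij}=0$, invoke the refined expansion \eqref{rem:boundF_H} to isolate $\frac{p}{2\kappa}V^2$, and then read off \eqref{equ:odeforA} from \eqref{equ:v_2ijH2ij} via the Gaussian-moment computation you describe. Part $(ii)$ is fine.

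There is, however, a genuine gap in your treatment of the error terms in part $(i)$. Your two-region split for $\int \tilde V^2 H_{2,ij}\rho$ (and, implicitly, for the cubic term $\int |V|^3 H_{2,ij}\rho$) produces on $\{|y|>R\}$ a contribution of size $Ce^{-cR^2}$ coming from the boundedness of $V$ and the Gaussian tail of $\rho$. To absorb this as $o(\|V_{null}\|^2_{L^2_\rho})$ you would need a lower bound on $\|V_{null}\|_{L^2_\rho}$, but at this point in the argument no such bound is available: the $1/s$ rate only appears \emph{after} Proposition \ref{prop:ptvnull}, in Proposition \ref{prop:behv}. No optimization of $R=R(s)$ can close this without circularity.

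The mechanism actually used (by Filippas--Kohn and carried over in the paper) is already built into Lemma \ref{lemm:ODEsys}: the quantity $\bar Y$ there is not $\|V_-\|_{L^2_\rho}$ alone but $\|V_-\|_{L^2_\rho}+r(s)$ with $r(s)=\big\||y|^{k/2}V^2\big\|_{L^2_\rho}$. Hence when Lemma \ref{lemm:ODEinFK92} yields case $(ii)$, one obtains $r(s)=o(\|V_{null}\|_{L^2_\rho})$ \emph{for free}, in addition to \eqref{equ:v_vnull2}. This is precisely what controls the polynomial weight: by Cauchy--Schwarz,
\[
\int_{\mathbb{R}^n} |V|^3 |H_{2,ij}|\,\rho\,dy \;\le\; \|V\|_{L^2_\rho}\Big(\int_{\mathbb{R}^n} |V|^4 H_{2,ij}^2\,\rho\,dy\Big)^{1/2}\;\le\; C\,\|V\|_{L^2_\rho}\,r(s)\;=\;o\big(\|V_{null}\|^2_{L^2_\rho}\big),
\]
choosing $k$ so that $|H_{2,ij}|^2\le C(1+|y|^k)$; the $\tilde V^2$ term is handled the same way after writing $\tilde V^2\le 2V^2+2V_{null}^2$. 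So the fix is not to split into regions but to carry the weighted quantity $r(s)$ from Lemma \ref{lemm:ODEsys} through to this step.
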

\begin{proof} Let us remark that $ii)$ follows directly from $i)$. Here, one has to use \eqref{rem:boundF_H} which is more accurate than \eqref{equ:estFbar2}, in order to isolate the $\mathcal{O}(V^2)$ term in the nonlinear term. Using properties of Hermites polynomials, we may project that term and obtain \eqref{equ:v_2ijH2ij}. 
\end{proof}
In the next step, we show  that although  we can not derive directly from \eqref{equ:v_2ijH2ij} the asymptotic behavior of $V_{null}(s)$, we can use it to show that $\|V(s)\|_{L^2_\rho}$ decays like $\frac{1}{s}$ as $s \to +\infty$. More precisely, we have the following proposition:
\begin{prop}\label{prop:behv} If $\|V_{+}(s)\|_{L^2_\rho}+ \|V_{-}(s)\|_{L^2_\rho} = o\left(\|V_{null}(s)\|_{L^2_\rho}\right)$, then for $s$ large, we have 
\begin{equation}\label{equ:behavCaseA}
\frac{c_1}{s} \leq \|V(s)\|_{L^2_\rho} \leq \frac{c_2}{s},
\end{equation}
for some positive constants $c_1$ and $c_2$.
\end{prop}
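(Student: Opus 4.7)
The plan is to reduce the estimate on $\|V(s)\|_{L^2_\rho}$ to an estimate on the matrix $A(s)$ provided by Proposition \ref{prop:ptvnull}(ii), and then analyze the matrix Riccati equation \eqref{equ:odeforA}. By the hypothesis \eqref{equ:v_vnull2} and by \eqref{equ:defVnull}, we have
$$
\|V(s)\|_{L^2_\rho} \;\asymp\; \|V_{null}(s)\|_{L^2_\rho} \;\asymp\; \|A(s)\|\qquad\text{as }s\to+\infty,
$$
so it is enough to prove $c_1/s \leq \|A(s)\| \leq c_2/s$ for $s$ large. The guiding heuristic is the scalar model $\mu' = (4p/\kappa)\mu^2$, whose only solutions tending to $0$ at infinity are $\mu(s) \sim -\kappa/(4ps)$, so I expect the Riccati dynamics to force exactly the rate $1/s$.

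For the \textbf{lower bound}, the plan is to use only the size of the nonlinearity. From \eqref{equ:odeforA} we have $\|A'(s)\| \leq C\|A(s)\|^2$, which implies $\mu(s):=\|A(s)\|$ satisfies $|\mu'(s)| \leq C\mu(s)^2$. Writing this as $|(\mu^{-1})'(s)| \leq C$ and integrating from some $s_0$ at which $\mu(s_0) > 0$ yields $\mu(s)^{-1} \leq \mu(s_0)^{-1} + C(s-s_0)$, hence $\mu(s) \geq c_1/s$. The condition $\mu(s_0)>0$ is not a loss: if $A(s_0)=0$, then the Cauchy–Lipschitz structure of \eqref{equ:odeforA} (together with $A\to 0$) would force $A\equiv 0$ past $s_0$, and via \eqref{equ:v_vnull2} one concludes $V\equiv 0$, reducing to case $(i)$ of Proposition \ref{prop:refinedasymptotic}, in which the statement of Proposition \ref{prop:behv} is vacuous.

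For the \textbf{upper bound}, one needs the sign information in \eqref{equ:odeforA}, since the mere size estimate $|A'|\leq C\|A\|^2$ is compatible with much slower decay. I would first show that every eigenvalue of $A(s)$ is essentially non-positive for $s$ large: for a unit eigenvector $\xi(s)$ with eigenvalue $\lambda(s)$, Proposition \ref{prop:ptvnull}(ii) gives
$$
\lambda'(s) \;=\; \frac{4p}{\kappa}\lambda(s)^2 + o(\|A(s)\|^2).
$$
If some $\lambda(s)>0$ with $\lambda \gtrsim \|A\|$, the right-hand side stays bounded below by a positive multiple of $\lambda^2$, so $\lambda$ would grow, contradicting $A\to 0$. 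Hence only non-positive eigenvalues survive, and $\|A(s)\|=-\lambda_{\min}(A(s))$ for $s$ large. Applying the same eigenvalue identity to $\lambda_{\min}$ gives $\lambda_{\min}'(s) \geq \bigl(\tfrac{4p}{\kappa}-\epsilon\bigr)\lambda_{\min}(s)^2$, so $(\lambda_{\min}^{-1})'(s) \leq -(\tfrac{4p}{\kappa}-\epsilon)$. Integrating (with $\lambda_{\min}<0$) yields $|\lambda_{\min}(s)| \leq c_2/s$, i.e.\ $\|A(s)\|\leq c_2/s$.

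The \textbf{main obstacle} is rigorously justifying the eigenvalue ODE above: the orthonormal frame diagonalizing $A(s)$ depends on $s$, so one must either argue with a Rellich-type selection of smooth eigenprojections (which is delicate when eigenvalues cross), or bypass this by working on intervals where $A$ is negative definite and using the substitution $B(s)=-A(s)^{-1}$, which turns \eqref{equ:odeforA} into $B'(s) = \tfrac{4p}{\kappa}I + o(1)$; integrating yields $B(s)\sim \tfrac{4p}{\kappa}sI$ and therefore $\|A(s)\|\sim \kappa/(4ps)$. Both routes appear in \cite{FLaihn93,VELtams93} in the unperturbed case, and thanks to \eqref{equ:estFbar2}–\eqref{rem:boundF_H} they carry over to our setting without change; I would follow the substitution $B=-A^{-1}$ as the cleanest path, and control the error term using the already established $\|A\|\asymp \|V\|_{L^2_\rho}\to 0$.
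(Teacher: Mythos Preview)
Your approach is essentially the same as the paper's: reduce to $\|A(s)\|$ via \eqref{equ:defVnull}, then analyze the eigenvalues of $A$ through the Riccati equation \eqref{equ:odeforA}. The paper states the Rellich--Kato smooth diagonalization (Lemma \ref{lemm:Aphi}) and the resulting eigenvalue ODE $\lambda_i' = \tfrac{4p}{\kappa}\lambda_i^2 + o\big(\sum_j\lambda_j^2\big)$ (Lemma \ref{lemm:equinlam}), then defers to \cite{FLaihn93}, Section~3, for the two-sided bound $c_1/s \le \sum_i |\lambda_i(s)| \le c_2/s$; this is exactly the route your upper-bound sketch follows. Your lower-bound argument---integrate $|(\|A\|^{-1})'| \le C$ directly from $\|A'\|\le C\|A\|^2$---is in fact slightly more elementary than the paper's, since it bypasses diagonalization entirely.

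One caution on the alternative you propose via $B=-A^{-1}$: it is not safe here and should be dropped. Even once the eigenvalues are known to be non-positive, $A$ need not be negative \emph{definite}; and when it is invertible, the transformed error term $A^{-1}\cdot o(\|A\|^2)\cdot A^{-1}$ is $o(1)$ only if the condition number $\|A\|\,\|A^{-1}\|$ stays bounded. Nothing at this stage guarantees that, and indeed the later refinement (see \eqref{eq:tmpLa12}--\eqref{eq:tmpLap2}) shows that some eigenvalues can be $o(1/s)$ while others are $\sim -1/s$, so the condition number blows up. Stick with the eigenvalue-ODE route via Rellich--Kato, which is what both the paper and \cite{FLaihn93} actually do.
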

\begin{proof} Since $\|V(s)\|_{L^2_\rho} \sim \|V_{null}(s)\|_{L^2_\rho}$ and because of \eqref{equ:defVnull}, it is enough to show that 
\begin{equation}\label{equ:Asdecay}
\frac{c_1}{s} \leq \|A(s)\| \leq \frac{c_2}{s}, \quad \text{for $s$ large}. 
\end{equation} 
Since the proof of \eqref{equ:Asdecay} is totally given in Section 3 of Filippas and Liu \cite{FLaihn93}, we just give its steps of the proof below.
\noindent The following Lemma asserts that $A(s)$ has continuously differential eigenvalues:
\begin{lemm}[\textbf{\cite{RELbook69, KATbook95}}] \label{lemm:Aphi}
Suppose that $A(s)$ is a $n \times n$ symmetric and continuously differentiable matrix-function in some interval $I$, then there exists continuously differentiable functions $\lambda_1(s), \dots , \lambda_n(s)$ in $I$ such that for all $i \in \{1,\dots, n\}$,
$$A(s)\Phi^{(i)}(s) = \lambda_i(s)\Phi^{(i)}(s),$$
for some orthonormal system of vector-functions $\Phi^{(1)}(s), \dots, \Phi^{(n)}(s)$.
\end{lemm}
\noindent Let $\lambda_1(s), \dots , \lambda_n(s)$ be the eigenvalues of $A(s)$. We can derive from \eqref{equ:odeforA} an equation satisfied by $\lambda_i(s),\, i \in \{1, \dots, n\}$:
\begin{lemm}[\textbf{Filippas and Liu \cite{FLaihn93}}]\label{lemm:equinlam}
The eigenvalues of $A(s)$ satisfy for all $i \in \{1, \dots, n\}$,
\begin{equation}\label{equ:wer456l}
\lambda'_i(s) = \frac{4p}{\kappa}\lambda^2_i(s) + o\left( \sum_{i=1}^n \lambda^2_i(s)\right).
\end{equation}
\end{lemm}
\noindent Using \eqref{equ:wer456l}, one can show that (see the end of Section 3 in \cite{FLaihn93})
\begin{equation}\label{equ:behALams}
\frac{c_1}{s} \leq \sum_{i=1}^n |\lambda_i(s)| \leq \frac{c_2}{s}, \quad \text{for $s$ large}.
\end{equation}
Since $\|A(s)\| = \sum_{i=1}^n |\lambda_i(s)|$, this concludes the proof of \eqref{equ:Asdecay} and Proposition \ref{prop:behv} also.
\end{proof}

Using the fact that $\|V(s)\|_{L^2_\rho}$ decays like $\frac{1}{s}$, we will show that $\|V_-(s))\|_{L^2_\rho} + \|V_+(s))\|_{L^2_\rho} $ is in fact $\mathcal{O}(\|V_{null}(s))\|^2_{L^2_\rho}$ and not only $o(\|V_{null}(s))\|_{L^2_\rho})$. This new estimate will be used then to derive a more accurate equation satisfied by $V_{null}$. 
\begin{prop} \label{prop:newEqvnull} If $\|V_{+}(s)\|_{L^2_\rho}+ \|V_{-}(s)\|_{L^2_\rho} = o\left(\|V_{null}(s)\|_{L^2_\rho}\right)$, then we have 
\begin{align}
V_{2,ij}'(s) &= \frac{p}{2\kappa} \int_{\mathbb{R}^n} V^2_{null}(y,s) \frac{H_{2,ij}(y)}{\|H_{2,ij}\|^2_{L^2_\rho} } \rho(y)dy \nonumber\\
& \qquad \qquad + \mathcal{O}\left(\|V_{null}(s)\|^3_{L^2_\rho}\right) +  \mathcal{O}\left(\frac{\|V_{null}(s)\|^2_{L^2_\rho}}{s^{a-1}}\right),\label{equ:newqeq2vnull}
\end{align}
and 
\begin{equation}\label{equ:newqeq2As}
A'(s) = \frac{4p}{\kappa}A^2(s) + \mathcal{O}\left(\frac{1}{s^3}\right) + \mathcal{O}\left(\frac{1}{s^{a+1}}\right),
\end{equation}
where $A(s)$ is given in \eqref{equ:defVnull}.
\end{prop}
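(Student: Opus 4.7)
The plan is to upgrade the asymptotic identity \eqref{equ:v_2ijH2ij}, whose remainder is only $o(\|V_{null}\|^2_{L^2_\rho})$, to the quantitative form \eqref{equ:newqeq2vnull}. The crucial new ingredient compared with Proposition \ref{prop:ptvnull} is the sharp rate $\|V(s)\|_{L^2_\rho} \sim 1/s$ from Proposition \ref{prop:behv}; with this in hand I will run a bootstrap that produces an improved pointwise-in-$s$ rate on $V_+$ and $V_-$, and this in turn sharpens the error in the projection of $\bar F$.

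First I would project equation \eqref{equ:V} against the eigenfunction $H_{2,ij}$ of $\mathcal{L}$ (null eigenspace), obtaining the exact identity
\begin{equation*}
V_{2,ij}'(s) = \frac{1}{\|H_{2,ij}\|_{L^2_\rho}^2}\int_{\mathbb{R}^n}\bar{F}(V,s)\,H_{2,ij}(y)\,\rho(y)\,dy,
\end{equation*}
and then use \eqref{rem:boundF_H} to split $\bar{F}(V,s) = \tfrac{p}{2\kappa}V^2 + R(V,s)$ with $|R| \leq C(|V|^3 + V^2 s^{1-a})$. Expanding $V = V_{null} + V_+ + V_-$, the pure $V_{null}^2$ piece produces the main term of \eqref{equ:newqeq2vnull}. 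The remaining contributions to be controlled are the cross-integral $\int V_{null}(V_+ + V_-)H_{2,ij}\rho$, the purely quadratic piece $\int (V_+ + V_-)^2 H_{2,ij}\rho$, and the remainder $\int R(V,s) H_{2,ij}\rho$.

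To handle the first two pieces I need the refined estimate
\begin{equation*}
\|V_+(s)\|_{L^2_\rho} + \|V_-(s)\|_{L^2_\rho} = \mathcal{O}(s^{-2}) + \mathcal{O}(s^{-(a+1)}) \quad \text{as } s \to +\infty.
\end{equation*}
I would prove this by Duhamel's formula on each projection. For $V_-$, using the contractivity of $e^{s\mathcal{L}}$ on the stable eigenspace at rate $e^{-s/2}$,
\begin{equation*}
V_-(s) = e^{(s-\sigma)\mathcal{L}}V_-(\sigma) + \int_\sigma^s e^{(s-\tau)\mathcal{L}}\pi_-\bar{F}(V,\tau)\,d\tau,
\end{equation*}
and for $V_+$ (unstable, but $V_+ \to 0$ at infinity by Proposition \ref{prop:reducetovnull}) the backward representation
\begin{equation*}
V_+(s) = -\int_s^\infty e^{(s-\tau)\mathcal{L}}\pi_+\bar{F}(V,\tau)\,d\tau.
\end{equation*}
The key observation is that $\pi_\pm$ projects onto the finite-dimensional span of a few fixed Hermite polynomials, so $\|\pi_\pm\bar{F}(V,\tau)\|_{L^2_\rho}$ is bounded by the explicit pairings $|\int \bar F(V,\tau)H_\alpha\rho|$; one decomposes $V = V_{null}+V_++V_-$ inside such a pairing and uses $\|V\|_{L^\infty}\leq C$ from Theorem \ref{theo:blrate}, the polynomial nature of $V_{null}$, and $\|V_{null}\|_{L^2_\rho}\sim 1/\tau$ to get that these pairings are of size $\mathcal{O}(\tau^{-2}) + \mathcal{O}(\tau^{-(a+1)})$. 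Plugging into the Duhamel integrals yields the refined bound on $V_\pm$.

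Once this is in place, Cauchy–Schwarz and the equivalence of norms on the finite-dimensional space containing $V_{null}\cdot H_{2,ij}$ give
\begin{equation*}
\left|\int V_{null}(V_+ + V_-)H_{2,ij}\rho\,dy\right| \leq C\|V_{null}\|_{L^2_\rho}\bigl(\|V_+\|_{L^2_\rho} + \|V_-\|_{L^2_\rho}\bigr) = \mathcal{O}(s^{-3}) + \mathcal{O}(s^{-(a+2)}),
\end{equation*}
while the purely quadratic piece is $\mathcal{O}(s^{-4})$ and the $R$-contribution is $\mathcal{O}(\|V\|_{L^\infty}\|V\|_{L^2_\rho}^2) + \mathcal{O}(\|V\|_{L^2_\rho}^2/s^{a-1}) = \mathcal{O}(s^{-3}) + \mathcal{O}(s^{-(a+1)})$. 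Since $\|V_{null}\|_{L^2_\rho}^3 = \mathcal{O}(s^{-3})$ and $\|V_{null}\|_{L^2_\rho}^2/s^{a-1} = \mathcal{O}(s^{-(a+1)})$, collecting these estimates yields \eqref{equ:newqeq2vnull}. Finally, \eqref{equ:newqeq2As} is obtained by transcribing \eqref{equ:newqeq2vnull} through the correspondence between the coefficients $V_{2,ij}(s)$ and the symmetric matrix $A(s)$ given in \eqref{equ:defVnull}, exactly as one passed from \eqref{equ:v_2ijH2ij} to \eqref{equ:odeforA} in Proposition \ref{prop:ptvnull}, now keeping track of the quantitative remainders.

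The main obstacle is proving the refined bound $\|V_\pm\|_{L^2_\rho} = \mathcal{O}(s^{-2}) + \mathcal{O}(s^{-(a+1)})$: the naive bound $\|\bar{F}\|_{L^2_\rho} \leq C\|V\|_{L^\infty}\|V\|_{L^2_\rho} = \mathcal{O}(s^{-1})$ is one full power of $s$ too weak, so one cannot use it in the Duhamel step and must instead exploit the finite-dimensional structure of $\pi_\pm$ together with the orthogonality of Hermite polynomials to gain the missing power of $s$.
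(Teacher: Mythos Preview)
Your overall strategy --- project \eqref{equ:V} onto $H_{2,ij}$, expand $\bar F$ via \eqref{rem:boundF_H}, and control the cross terms after first upgrading the bound on $V_\pm$ through Duhamel --- is the right shape and matches what the paper does. But the step you flag as the main obstacle really is a gap, and your proposed fix does not close it.

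The claim that ``$\pi_\pm$ projects onto the finite-dimensional span of a few fixed Hermite polynomials'' is false for $\pi_-$: the negative subspace is spanned by all $H_\alpha$ with $|\alpha|\geq 3$, an infinite family. So for $V_-$ you cannot reduce $\|\pi_-\bar F\|_{L^2_\rho}$ to finitely many pairings. Even for $\pi_+$ (which is finite-dimensional), the pairing argument does not close: writing $V^2 = V_{null}^2 + 2V_{null}(V_++V_-) + (V_++V_-)^2$, the first two pieces are indeed $\mathcal{O}(s^{-2})$ by the polynomial structure of $V_{null}$ and Cauchy--Schwarz, but the third piece $\int (V_++V_-)^2 H_\alpha\rho$ requires control of $\|V_++V_-\|_{L^4_\rho}$, which you do not have from $L^2_\rho$ information alone. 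The same problem recurs in your estimate of the $R$-contribution, where $\int |V|^3|H_{2,ij}|\rho$ is again an $L^4_\rho$-type quantity.

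The missing ingredient, which the paper invokes explicitly, is the Herrero--Vel\'azquez smoothing estimate (Lemma~\ref{lemm:HVslow}): for solutions of \eqref{equ:V} under \eqref{equ:estFbar2}, all weighted $L^r_\rho$ norms are controlled by the $L^2_\rho$ norm (up to a fixed time shift). Combined with Proposition~\ref{prop:behv} this gives $\|V(s)\|_{L^r_\rho} = \mathcal{O}(1/s)$ for every $r>1$, hence $\|\bar F(V,s)\|_{L^2_\rho} \leq C\|V\|_{L^4_\rho}^2 = \mathcal{O}(s^{-2})$ directly, with no need for finite-dimensionality of the projection. Feeding this into your Duhamel representations (which are fine as written) yields $\|V_+\|_{L^2_\rho}+\|V_-\|_{L^2_\rho}=\mathcal{O}(\|V_{null}\|^2_{L^2_\rho})$, and the same $L^r_\rho$ control handles the cubic and $s^{1-a}V^2$ remainders when you project onto $H_{2,ij}$. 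Once you insert Lemma~\ref{lemm:HVslow} at this point, the rest of your argument goes through and \eqref{equ:newqeq2As} follows from \eqref{equ:newqeq2vnull} exactly as you say.
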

\begin{proof} The proof corresponds to Section 4 in \cite{FLaihn93}. Let us mention that the proof relies on the following priori estimate of solutions of \eqref{equ:V} shown by Herrero and Vel\'azquez in \cite{HVaihn93}. Although they proved their result in the case $N = 1$, their proof holds in higher dimensions under the general assumption \eqref{equ:estFbar2}.
\begin{lemm}[\textbf{Herrero and Val\'azquez \cite{HVaihn93}}]\label{lemm:HVslow} Assume that $V$ solves \eqref{equ:V} and $|V| \leq M < +\infty$. Then for any $r >1$, $q > 1$ and $L >0$, there exist $s_0^* = s_0^*(q,r)$ and $C = C(q,r,L) >0$ such that 
\begin{equation*}
\left(\int_{\mathbb{R}^n} |V(y, s + \tau)|^r \rho(y) dy\right)^{\frac{1}{r}} \leq C\left(\int_{\mathbb{R}^n} |V(y, s)|^q \rho(y) dy\right)^{\frac{1}{q}},
\end{equation*}
for any $s\geq 0$ and any $\tau \in [s_0^*, s_0^* + L]$.
\end{lemm}
\noindent From Proposition \ref{prop:behv}, we have $\|V(s)\|_{L^2_\rho}$ decays like $\frac{1}{s}$. Then Lemma \ref{lemm:HVslow} implies that 
\begin{equation}\label{equ:slowdecayV}
\left(\int_{\mathbb{R}^n} |V(y, s)|^r \rho(y) dy\right)^{\frac{1}{r}} \leq C\left(\int_{\mathbb{R}^n} |V(y, s)|^q \rho(y) dy\right)^{\frac{1}{q}},
\end{equation}
for any $r > 1$, $q > 1$ and for $s$ large.\\
Using estimate \eqref{equ:slowdecayV}, we derive the fact that 
\begin{equation}\label{equ:V2new}
\|V_{+}(s)\|_{L^2_\rho}+ \|V_{-}(s)\|_{L^2_\rho} = \mathcal{O}\left(\|V_{null}(s)\|^2_{L^2_\rho}\right).
\end{equation}
Then, projecting \eqref{equ:V} onto the null space of $\mathcal{L}$ and using \eqref{equ:V2new}, \eqref{equ:slowdecayV}, we would obtain \eqref{equ:newqeq2vnull}. Since $\|V(s)\|_{L^2_\rho} \sim \|V_{null}(s)\|_{L^2_\rho} \sim \frac{1}{s}$, we then obtain \eqref{equ:newqeq2As} from \eqref{equ:newqeq2vnull}. This ends the proof of Proposition \ref{prop:newEqvnull}.
\end{proof}
Let us now use \ref{prop:newEqvnull} to derive conclusion $ii)$ of Proposition \ref{prop:refinedasymptotic}. Using Lemma \ref{lemm:Aphi}, we get from \eqref{equ:newqeq2As} that the eigenvalues of $A(s)$ satisfy
$$
\forall i \in \{1, \dots, n\}, \;\; \lambda'_i(s) = \frac{4p}{\kappa}\lambda_i^2(s) + \mathcal{O}\left(\frac{1}{s^{a+1}}\right)+ \mathcal{O}\left(\frac{1}{s^3}\right), \quad \text{as} \;\; s \to +\infty,
$$
then Lemma \ref{lemm:elementary} yields
\begin{align}
\text{either} &\;\; \lambda_i(s) = -\frac{\kappa}{4ps} +  \mathcal{O}\left(\frac{1}{s^{a}}\right)\;\;\text{or} \;\; \lambda_i(s) = \mathcal{O}\left(\frac{1}{s^{a}}\right),\;\;\text{if} \; a \in (1,2), \label{eq:tmpLa12}\\
\text{either} &\;\; \lambda_i(s) = -\frac{\kappa}{4ps} + \mathcal{O}\left(\frac{\log s}{s^2}\right) \;\; \text{or} \;\; \lambda_i(s) = \mathcal{O}\left(\frac{1}{s^2}\right), \;\; \text{if} \; a \geq 2.\label{eq:tmpLap2}
\end{align}
Therefore, Proposition 5.1 in \cite{FLaihn93} yields the existence of $l \in \{1, \dots, n\}$ and a $n \times n$ orthonormal matrix $Q$ such that 
\begin{align*}
A(s) &= -\frac{\kappa}{4ps}A_l + \mathcal{O}\left(\frac{1}{s^{a}}\right),\;\;\text{if} \; a \in (1,2),\\
A(s) &= -\frac{\kappa}{4ps}A_l + \mathcal{O}\left(\frac{\log s}{s^2}\right), \;\; \text{if} \; a \geq 2,
\end{align*}
where $$A_l = Q\left(\begin{array}{cc}
\mathbf{I}_{l} & O\\
O & O
\end{array} \right) Q^{-1}.$$
Combining this with \eqref{equ:defVnull}, it yields the behavior of $V_{null}(y,s)$ and $V(y,s)$ announced in $ii)$ of Proposition \ref{prop:refinedasymptotic}. The convergence in $\mathcal{C}^{k,\gamma}_{loc}$ follows from standard parabolic regularity (see section 5 in \cite{FLaihn93} for a brief demonstration). This completes the proof of $ii)$ of Proposition \ref{prop:refinedasymptotic}.

\subsection{Deriving conclusions $i)$ and $iii)$ of Proposition \ref{prop:refinedasymptotic}}
In this part, we recall the proof given by Vel\'azquez \cite{VELtams93}. We focus on the case $i)$ of Proposition \ref{prop:reducetovnull}, namely $\|V(s)\|_{L^2_\rho} = \mathcal{O}(e^{-\mu s})$ for some $\mu > 0$, and we will show that it leads to either $i)$ or $iii)$ of Proposition \ref{prop:refinedasymptotic}. Let us start the first step. From equation \eqref{equ:V}, we write $V(y,s)$ in the integration form
$$V(y,s) = S_\mathcal{L}(s)V(s_0) + \int_{s_0}^sS_\mathcal{L}(s -\tau)\bar{F}(V(\tau), \tau)d\tau, \quad \text{with} \quad s_0 = -\log T,$$
where $S_\mathcal{L}(s)$ is the linear semigroup corresponding to the heat-type equation $\partial V = \mathcal{L}V$ given by
$$S_\mathcal{L}(s)V(y,\tau) = \sum_{|\alpha| = 0}^\infty a_\alpha(\tau)e^{\left(1 - \frac{|\alpha|}{2} \right)(s -\tau)}H_\alpha(y),$$
with  
$$a_\alpha(\tau) = \langle V(\tau),H_\alpha\rangle := \int_{\mathbb{R}^n}V(y,\tau)H_\alpha(y)\rho(y)dy.$$

\noindent Let us fix a integer $k_0 > 2$ such that $\frac{k_0}{2} - 1 < 2\mu < \frac{k_0 + 1}{2} - 1$ and write $V(y,s)$ as follow:
\begin{align*}
V(y,s) &= \sum_{|\alpha| \leq k_0} a_\alpha(s_0)e^{\left(1 - \frac{|\alpha|}{2} \right)(s -s_0)}H_\alpha(y) + \sum_{|\alpha| \geq k_0 + 1} a_\alpha(s_0)e^{\left(1 - \frac{|\alpha|}{2} \right)(s -s_0)}H_\alpha(y)\\
& + \sum_{|\alpha| \leq k_0}H_\alpha(y) \int_{s_0}^s e^{\left(1 - \frac{|\alpha|}{2} \right)(s - \tau)}\langle \bar{F}(V(y,\tau), \tau), H_\alpha(y)\rangle d\tau \\
& +  \sum_{|\alpha| \geq k_0 + 1}H_\alpha(y) \int_{s_0}^s e^{\left(1 - \frac{|\alpha|}{2} \right)(s - \tau)}\langle \bar{F}(V(y,\tau), \tau), H_\alpha(y)\rangle d\tau\\
& := I + II + III + IV.
\end{align*}
Since $|\bar{F}(V,s)| \leq C|V|^2$ and $\|V(s)\|_{L^2_\rho} \leq Ce^{-\mu s}$, we derive from Lemma \ref{lemm:HVslow} that 
\begin{equation}\label{equ:esttmpFb}
\|\bar{F}(V(\cdot,\tau))\|_{L^2_\rho} \leq Ce^{-2\mu \tau}.
\end{equation}
By a direct computation, we find that 
$$\|II\|_{L^2_\rho} + \|IV\|_{L^2_\rho} \leq Ce^{-2\mu s}, \quad \text{for some $C > 0$}.$$
For $III$, we write 
\begin{align*}
&\int_{s_0}^s e^{-\left(1 - \frac{|\alpha|}{2} \right)\tau}\langle \bar{F}(V(y,\tau), \tau), H_\alpha(y)\rangle d\tau\\
& \qquad \qquad = \beta_\alpha - \int_{s}^{+\infty} e^{-\left(1 - \frac{|\alpha|}{2} \right)\tau}\langle \bar{F}(V(y,\tau), \tau), H_\alpha(y)\rangle d\tau.
\end{align*}
Using \eqref{equ:esttmpFb}, we can bound the last term of the above expression by $Ce^{-2\mu s}$. Hence, 
$$V(y,s) = \sum_{|\alpha| \leq k_0} (a_\alpha + \beta_\alpha)e^{\left(1 - \frac{|\alpha|}{2}\right)s}H_\alpha(y) + Q(y,s),$$
where $\|Q(y,s)\|_{L^2_\rho} = \mathcal{O}(e^{-2 \mu s})$.\\
Since $\|V(y,s)\|_{L^2_\rho} = \mathcal{O}(e^{-\mu s})$, it requires $a_\alpha + \beta_\alpha = 0$ for $|\alpha| \leq 2$. Thus, we have two possibilities: if there exists an integer $m \in [3, k_0]$ such that $a_\alpha + \beta_\alpha \ne 0$ for $|\alpha| = m$ and $a_\alpha + \beta_\alpha = 0$ for all $|\alpha| < m$, then we obtain $iii)$ of Proposition \ref{prop:refinedasymptotic} for some $m \in [3, k_0]$. If this is not the case, we get $\|V(y,s)\|_{L^2_\rho} = \mathcal{O}(e^{-2 \mu s})$. Using this new estimate and repeating the process in a finite number of steps, we may obtain either $iii)$ of Proposition \ref{prop:refinedasymptotic} for some $m \geq 3$ or $\|V(y,s)\|_{L^2_\rho} = \mathcal{O}(e^{-Rs})$ for any $R > 0$. For the second case, we use the  following nondegeneracy result from Herrero and Vel\'azquez  \cite{HVaihn93} in order to conclude that $V(y,s) \equiv 0$, which is $i)$ of Proposition \ref{prop:refinedasymptotic},
\begin{lemm}[\textbf{Herrero and Vel\'azquez  \cite{HVaihn93}}] Let $V$ be a solution to equation \eqref{equ:V}. Assume that $|V(y,s)|$ is bounded, and that for any $R > 0$ there exists $C = C(R)$ such that 
$$\|V(s)\|_{L^2_\rho} \leq Ce^{-Rs} \quad \text{if} \quad s \geq 0,$$
then $V(y,s)\equiv 0$.
\end{lemm}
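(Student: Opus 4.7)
My plan is to exploit the spectral decomposition of $\mathcal{L}$ in the Hermite basis together with the Duhamel principle and the slow-decay estimate of Lemma \ref{lemm:HVslow}. The strategy is classical for unique continuation at $s=+\infty$: show that super-exponential decay of $\|V(s)\|_{L^2_\rho}$ is incompatible with the existence of any nonzero Hermite mode of $V$, thereby forcing $V \equiv 0$.

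First, I would expand $V(y,s) = \sum_\alpha a_\alpha(s) H_\alpha(y)$. Each coefficient satisfies the scalar ODE
$$\dot a_\alpha(s) = \bigl(1 - \tfrac{|\alpha|}{2}\bigr)\,a_\alpha(s) + g_\alpha(s), \qquad g_\alpha(s) = \frac{\langle \bar F(V(s),s), H_\alpha\rangle_{L^2_\rho}}{\|H_\alpha\|^2_{L^2_\rho}}.$$
By Cauchy--Schwarz and the bound $|\bar F(V,s)| \leq CV^2 \leq C\|V\|_{L^\infty}|V|$, each $a_\alpha$ and each $g_\alpha$ inherits the super-exponential decay of $\|V\|_{L^2_\rho}$. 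Since the homogeneous part of the ODE has a fixed exponential rate $1 - |\alpha|/2$ while $a_\alpha$ decays faster than any exponential, dividing the variation of constants formula by $e^{(1-|\alpha|/2)s}$ and letting $s \to +\infty$ forces the compatibility identity
$$a_\alpha(s) = -\int_s^{+\infty} e^{(1-|\alpha|/2)(s-\tau)}\, g_\alpha(\tau)\, d\tau, \qquad \forall \alpha \in \mathbb{N}^n,\; s \geq 0,$$
all integrals converging thanks to the super-exponential decay of $g_\alpha$.

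The heart of the proof, and the step I expect to be the main obstacle, is the quadratic self-improvement. Combining the quadratic bound $|\bar F| \leq CV^2$ with Lemma \ref{lemm:HVslow} applied with $r=4$, $q=2$, one upgrades
$$\|\bar F(V(\tau),\tau)\|_{L^2_\rho} \leq C\,\|V(\tau - \tau_0)\|_{L^2_\rho}^2$$
for some fixed $\tau_0>0$. Substituting into the integral identity yields, for each $\alpha$, a bound of the form $|a_\alpha(s)| \leq K_\alpha \, \sup_{\sigma \geq s-\tau_0}\|V(\sigma)\|_{L^2_\rho}^2$. Summing in $\alpha$ (and using the spectral gap to control the high-frequency tail via the linear semigroup decay of $e^{\mathcal{L}t}$ on $\ker P_{\leq N}^\perp$, while treating $|\alpha| \leq N$ directly) produces the doubling inequality
$$\|V(s)\|_{L^2_\rho} \leq C\,\|V(s-\tau_0)\|_{L^2_\rho}^2 \qquad \text{for all $s$ large.}$$
Iterating this doubling makes $\|V\|_{L^2_\rho}$ decay doubly-exponentially, i.e.\ like $e^{-c 2^n}$ after $n$ iterations, which is incompatible with the uniform boundedness of $V$ unless $V \equiv 0$.

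The delicate technical point is the uniform-in-$\alpha$ summation: the kernel $e^{(|\alpha|/2-1)(\tau-s)}$ grows with $|\alpha|$, so the constants $K_\alpha$ a priori blow up. To close the argument, one must carefully balance a spectral cutoff $N = N(s)$ against the decay rate of $\|V\|_{L^2_\rho}$, exploiting that the super-exponential decay can swallow arbitrary polynomial growth in $N$. An alternative route to circumvent this difficulty would be to recast $V$ as a solution of a linear equation $V_s - \mathcal{L}V - C(y,s)V = 0$ with the bounded coefficient $C := \bar F(V)/V$, and to invoke a backward-uniqueness / log-convexity theorem of Agmon--Nirenberg type in the weighted space $L^2_\rho$, for which super-exponential decay immediately forces $V\equiv 0$.
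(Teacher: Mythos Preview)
Your main line of argument has a genuine logical gap at its final step. Even granting the doubling inequality $\|V(s)\|_{L^2_\rho}\le C\,\|V(s-\tau_0)\|_{L^2_\rho}^{2}$ for all large $s$ (whose derivation you yourself flag as delicate because of the $\alpha$-summation), iterating it forward only shows that $\|V(s)\|_{L^2_\rho}$ decays doubly-exponentially. But arbitrarily fast decay to zero is in no way ``incompatible with the uniform boundedness of $V$''; a bounded function may tend to zero as rapidly as one likes. No contradiction is produced, and nothing forces $V\equiv 0$. Iterating backward is no better: from $\|V(s-\tau_0)\|\ge (C^{-1}\|V(s)\|)^{1/2}$ one obtains at most a fixed finite lower bound at earlier times, again compatible with boundedness. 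The doubling inequality, true or not, is simply the wrong type of estimate for this conclusion.

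What is actually required is a \emph{lower} bound on $\|V(s)\|_{L^2_\rho}$, with a fixed exponential rate, valid whenever $V\not\equiv 0$; the hypothesis $\|V(s)\|\le C(R)e^{-Rs}$ for every $R$ then yields the contradiction. This is precisely your ``alternative route'', and it is in fact the argument of Herrero--Vel\'azquez that the paper invokes. One writes $\bar F(V,s)=c(y,s)V$ with $c$ bounded (licit since $|\bar F|\le C|V|^{2}\le C\|V\|_{L^\infty}|V|$), so that $V$ solves the linear equation $\partial_s V=\mathcal{L}V+cV$ in $L^2_\rho$ with $\mathcal{L}$ self-adjoint. A log-convexity computation of Agmon--Nirenberg type then controls the Rayleigh quotient $\Lambda(s)=-\langle\mathcal{L}V,V\rangle/\|V\|^{2}$: differentiating and using Cauchy--Schwarz gives $\Lambda'\le C(1+\Lambda)$, hence $\Lambda$ grows at most exponentially on any interval where $V\ne 0$, and since $\tfrac{d}{ds}\log\|V\|^{2}=-2\Lambda+O(1)$ one obtains a lower bound of the form $\|V(s)\|\ge c_0\,e^{-\mu(s)}$ with $\mu$ of controlled growth, contradicting super-exponential decay. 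You should promote this from an afterthought to the main argument; the spectral/Duhamel machinery you set up is not needed here.
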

\begin{proof} Since the proof written in \cite{HVaihn93} holds under general assumption \eqref{equ:estFbar2}, we then refer the reader to Lemma 3.5, page 144 of \cite{HVaihn93} for detail of the proof.
\end{proof}
Since the convergence in $\mathcal{C}^{k,\gamma}_{loc}$ for any $k \geq 1$ and $\gamma \in (0,1)$ follows from a standard parabolic regularity, we end the proof of Proposition \ref{prop:refinedasymptotic} here. This also concludes the proof of  Theorem \ref{theo:refinedasymptotic}.
 
\section{Bow-up profile for equation \eqref{equ:problem} in extended spaces regions}\label{sec:ext}
We give the proof of Theorem \ref{theo:pro} in this section. Note that the derivation of Theorem \ref{theo:pro} from Theorem \ref{theo:refinedasymptotic} in the unperturbed case ($h \equiv 0$) was done by Vel\'azquez in \cite{VELcpde92}. The idea to extend the  convergence up to sets of the type $\{|y| \leq K_0 \sqrt{s}\}$ or $\{|y| \leq K_0e^{\left(\frac{1}{2} - \frac{1}{m}\right)s}\}$ is to estimate the effect of the convective term $-\frac{y}{2}\cdot\nabla w$ in the equation \eqref{equ:divw1} in $L^q_\rho$ spaces with $q > 1$. Since the proof of Theorem \ref{theo:pro} is actually in spirit by the method given in \cite{VELcpde92}, all that we need to do is to control the strong perturbation term in equation \eqref{equ:divw1}. We therefore give the main steps of the proof and focus only on the new arguments. The proof will be separated into two parts: the first part concerns case $ii)$ in Theorem \ref{theo:refinedasymptotic} and gives the asymptotic behavior of $w$ in the $\frac{y}{\sqrt{s}}$ variable, and the second part concerns case $iii)$ in Theorem \ref{theo:refinedasymptotic} and gives the asymptotic behavior of $w$ in the $ye^{-\left(\frac{1}{2} - \frac{1}{m} \right)s}$ variable. In Part 1, we stick to the method of Vel\'azquez \cite{VELcpde92}, whereas, in Part 2, where we work in the scale $e^{-\mu s}$ for $\mu > 0$, we need new ideas to get rid of the term in the scale $\frac 1 s$ coming from the strong perturbation.\\

\noindent \textbf{Part 1: Case $ii)$ in Theorem \ref{theo:refinedasymptotic} and asymptotic behavior in the $\frac{y}{\sqrt{s}}$ variable}.\\
Let us restate $i)$ of Theorem \ref{theo:pro} in the following proposition:
\begin{prop}[\textbf{Asymptotic behavior in the $\frac{y}{\sqrt{s}}$ variable}] \label{prop:1} Assume that $w$ is a solution of equation \eqref{equ:divw1} which satisfies $ii)$ of Theorem \ref{theo:refinedasymptotic}. Then, for all $K > 0$, 
$$\sup_{|\xi| \leq K} \left|w(\xi\sqrt{s},s) - f_l(\xi)\right| = \mathcal{O}\left(\frac{1}{s^{a-1}} \right) + \mathcal{O}\left(\frac{\log s}{s} \right), \quad \text{as} \quad s \to + \infty,$$
where $f_l(\xi) = \kappa\left(1 + \frac{p-1}{4p}\sum_{j = 1}^l\xi_j^2\right)^{-\frac{1}{p-1}}$. 
\end{prop}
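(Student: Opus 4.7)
The plan is to work in the self-similar variable $\xi = y/\sqrt{s}$ and show that $W(\xi,s) := w(\xi\sqrt{s},s)$ converges to $f_l(\xi)$ uniformly on $|\xi|\leq K_0$. A direct change of variables in \eqref{equ:divw1} yields
\[
\partial_s W = \tfrac{1}{s}\Delta_\xi W - \Big(\tfrac{1}{2}-\tfrac{1}{2s}\Big)\xi\cdot\nabla_\xi W - \tfrac{W}{p-1} + |W|^{p-1}W + e^{-\frac{ps}{p-1}}h\big(e^{\frac{s}{p-1}}W\big),
\]
whose formal limit as $s\to\infty$ is $-\tfrac{1}{2}\xi\cdot\nabla f - \tfrac{f}{p-1} + f^p = 0$, and a direct computation confirms that $f_l$ solves this limiting first-order equation. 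By \eqref{equ:estimateH} and the uniform bound $\|w\|_{L^\infty}\leq C$ given by Theorem \ref{theo:blrate}, the perturbation term above is an $L^\infty$ forcing of size $\mathcal{O}(s^{-a})$.

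The first step is the \emph{inner expansion}. On any fixed compact $y$-set $|y|\leq R$, case $ii)$ of Theorem \ref{theo:refinedasymptotic} combined with the Taylor expansion $f_l(\xi)=\kappa - \frac{\kappa}{4p}\sum_{j=1}^l \xi_j^2 + \mathcal{O}(|\xi|^4)$ yields
\[
w(y,s) - f_l(y/\sqrt{s}) = \mathcal{O}(s^{-1}) + \mathcal{O}(s^{-a}) + \mathcal{O}(|y|^4/s^2),
\]
which in particular gives the announced estimate on the shrinking $\xi$-balls $|\xi|\leq R/\sqrt{s}$. Moreover, the parabolic estimates from Lemma \ref{lemm:regupa} provide a uniform control of $\nabla w$ and $\nabla^2 w$; after passing to the $\xi$ variable, this translates into bounds on $\nabla_\xi W$ and $\tfrac{1}{s}\Delta_\xi W$ that will be essential in the next step.

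The second step, following the scheme of Vel\'azquez \cite{VELcpde92}, is to propagate this inner control to the full ball $|\xi|\leq K_0$. Setting $\eta(\xi,s) = W(\xi,s) - f_l(\xi)$ and subtracting the limiting equation, one obtains
\[
\partial_s \eta = \tfrac{1}{s}\Delta_\xi W - \tfrac{1}{2}\xi\cdot\nabla_\xi\eta + \Big(p f_l^{p-1} - \tfrac{1}{p-1}\Big)\eta + \tfrac{1}{2s}\xi\cdot\nabla_\xi W + \mathcal{N}(\eta) + \mathcal{H}(W,s),
\]
where $\mathcal{N}(\eta) = |W|^{p-1}W - f_l^p - pf_l^{p-1}\eta = \mathcal{O}(\eta^2)$ and $\mathcal{H}$ is the strong-perturbation term, of uniform $L^\infty$ size $\mathcal{O}(s^{-a})$. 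The first-order transport $-\tfrac{1}{2}\xi\cdot\nabla_\xi$ has characteristics $\dot\xi = -\xi/2$, which contract any point with $|\xi|\leq K_0$ back to a neighborhood of the origin over a time interval of length of order $\log s$. Writing $\eta$ along these characteristics by Duhamel's formula, one estimates the various contributions separately: the inner datum, evaluated at the endpoint of the characteristic, yields the $\mathcal{O}(s^{-1}\log s)$ term; the perturbation $\mathcal{H}$, integrated along the transport, produces the $\mathcal{O}(s^{1-a})$ term; and the small diffusion $\tfrac{1}{s}\Delta_\xi W$ and convective correction $\tfrac{1}{2s}\xi\cdot\nabla_\xi W$ are treated as lower-order forcings thanks to the $\mathcal{C}^2$ bound on $w$.

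The main obstacle is to close the bootstrap for the quadratic nonlinearity $\mathcal{N}(\eta)$ uniformly in $|\xi|\leq K_0$ while simultaneously tracking the forcing through the Duhamel representation. This requires combining the uniform $L^\infty$ bound from Theorem \ref{theo:blrate} with an $L^q$ energy estimate, as in \cite{VELcpde92}, so that the quadratic error is absorbed into the linear part of the equation. The presence of the strong-perturbation term, which is the new feature with respect to \cite{VELcpde92}, is in fact benign in this part: thanks to \eqref{equ:estimateH}, its $L^\infty$ size $\mathcal{O}(s^{-a})$ yields precisely the $\mathcal{O}(s^{1-a})$ contribution after integration along characteristics, matching exactly the rate announced in the proposition.
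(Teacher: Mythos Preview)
Your approach diverges substantially from the paper's, and as written it contains two concrete gaps that prevent it from closing.

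First, the diffusion term is not small. You write $\tfrac{1}{s}\Delta_\xi W$ and propose to treat it as a lower-order forcing ``thanks to the $\mathcal{C}^2$ bound on $w$''. But $\Delta_\xi W = s\,\Delta_y w$, so $\tfrac{1}{s}\Delta_\xi W = \Delta_y w$, which by Lemma~\ref{lemm:regupa} is only $\mathcal{O}(1)$, not $o(1)$. A pure transport/characteristic argument that puts the Laplacian on the right-hand side therefore cannot succeed: the diffusion must remain part of the linear operator. This is exactly why the paper stays in the $(y,s)$ variables and propagates estimates via the full parabolic semigroup $S_{\mathcal{L}}$ of $\mathcal{L}=\Delta_y-\tfrac{y}{2}\cdot\nabla_y+1$, measured in the shifted weighted norms $L^{2,r}_\rho$ with $r=K_0 e^{(\tau-\bar s)/2}$, and closes via the nonlinear Gronwall lemma of Vel\'azquez (Lemma~\ref{lem:Gro}).

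Second, your inner datum is too large and you neglect the linear amplification. Comparing $w$ directly with $f_l$ leaves the constant $\tfrac{\kappa l}{2ps}$ in the difference, so on $|y|\le R$ one only gets $\eta=\mathcal{O}(1/s)$. Along your characteristics the zero-order coefficient $p f_l^{p-1}-\tfrac{1}{p-1}$ equals $1$ at $\xi=0$ and stays close to $1$ for most of the trajectory; over a time window of length $\sim\log s$ this produces an amplification factor $\sim e^{\log s}=s$, turning the $\mathcal{O}(1/s)$ inner datum into an $\mathcal{O}(1)$ contribution, not the $\mathcal{O}(s^{-1}\log s)$ you announce. The paper avoids this by linearizing around the corrected profile $\varphi(y,s)=f_l(y/\sqrt{s})+\tfrac{\kappa l}{2ps}$, which yields the sharper initial smallness $\|q\|_{L^2_\rho}=\mathcal{O}(s^{-a})+\mathcal{O}(s^{-2}\log s)$; after the unavoidable amplification by $e^{s-\bar s}=s$ this gives precisely the rate in the statement.

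In short, to make your outline work you would need to (i) keep the Laplacian in the linear part and use the parabolic semigroup rather than first-order characteristics, and (ii) subtract the $\tfrac{\kappa l}{2ps}$ correction before linearizing. Once you do both, you essentially recover the paper's proof.
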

\begin{proof} Following the method in \cite{VELcpde92}, we define $q = w - \varphi$, where
\begin{equation}\label{equ:defiv}
\varphi(y,s) = \kappa\left(1+ \frac{p-1}{4ps}\sum_{j=1}^l y_j^2\right)^{-\frac{1}{p-1}} + \frac{\kappa l}{2ps}.
\end{equation}
Using Taylor's formula in \eqref{equ:defiv} and $ii)$ of Theorem \ref{theo:refinedasymptotic}, we find that 
\begin{equation}
\|q(y,s)\|_{L^2_\rho} = \mathcal{O}\left(\frac{1}{s^{a}} \right) + \mathcal{O}\left(\frac{\log s}{s^2} \right), \quad \text{as} \quad s \to + \infty.
\end{equation}
Straightforward calculations based on equation \eqref{equ:divw1} yield 
\begin{equation}\label{eq:Wa}
\partial_s q = (\mathcal{L} + \omega)q + F(q) + G(q,s) + R(y,s), \quad \forall (y,s) \in \mathbb{R}^n \times [-\log T, +\infty), 
\end{equation}
where 
\begin{align*}
\omega(y,s) & = p(\varphi^{p-1} - \kappa^{p-1}) + e^{-s}h'\left(e^\frac{s}{p-1}\varphi\right),\\
F(q)& = |q + \varphi|^{p-1}(q + \varphi) - \varphi^p -p\varphi^{p-1}q,\\
G(q,s)& = e^{-\frac{ps}{p-1}}\left[h\left(e^\frac{s}{p-1}(q + \varphi)\right) - h\left(e^\frac{s}{p-1}\varphi\right) - e^\frac{s}{p-1}h'\left(e^\frac{s}{p-1}\varphi\right)q\right],\\
R(y,s)&= -\partial_s \varphi + \Delta \varphi - \frac{y}{2}\cdot \nabla \varphi - \frac{\varphi}{p-1} + \varphi^p + e^{-\frac{ps}{p-1}}h\left(e^\frac{s}{p-1}\varphi\right).
\end{align*}
Let $K_0 > 0$ be fixed, we consider first the case $|y| \geq 2K_0\sqrt{s}$ and then $|y| \leq 2K_0\sqrt{s}$ and make a Taylor expansion for $\xi = \frac{y}{\sqrt{s}}$ bounded. Simultaneously, noticing from \eqref{equ:estimateH}, we then obtain for all $s \geq s_0$, 
$$\omega(y,s) \leq \frac{C_1}{s}, $$
$$|F(q)| + |G(q,s)| \leq C_1(q^2+ \mathbf{1}_{\{|y| \geq 2K_0\sqrt{s}\}}),$$
$$|R(y,s)| \leq C_1\left( \frac{|y|^2}{s^2} + \frac{1}{s^2} + \frac{1}{s^a} + \mathbf{1}_{\{|y| \geq 2K_0\sqrt{s}\}}\right),$$
where $C_1 = C_1(M_0, K_0) > 0$, $M_0$ is the bound of $w$ in $L^\infty$-norm.\\
Let $Q = |q|$, we then use the above estimates and Kato's inequality, i.e  $\Delta f \cdot \text{sign}(f) \leq \Delta(|f|)$, to derive from equation \eqref{eq:Wa} the following: for all $K_0 > 0$ fixed, there are $C_* = C_*(K_0,M_0) > 0$ and a time $s' > 0$ large enough such that for all $s \geq s_* = \max\{s',-\log T\}$,
\begin{equation}\label{eq:Qa}
\partial_s Q \leq \left(\mathcal{L} + \frac{C_*}{s} \right)Q + C_*\left(Q^2 + \frac{|y|^2}{s^2} + \frac{1}{s^2}+ \frac{1}{s^a} + \mathbf{1}_{\{|y| \geq 2K_0\sqrt{s}\}} \right), \quad \forall y \in \mathbb{R}^n.
\end{equation}
Since 
$$\left|w(y,s) - f_l\left(\frac{y}{\sqrt{s}}\right) \right| \leq Q + \frac{\kappa l}{2ps},$$
the conclusion of  Proposition \ref{prop:1} follows if we show that 
\begin{equation}\label{equ:pr1}
\forall K_0 > 0,\quad \sup_{|y| \leq K_0\sqrt{s}} Q(y,s) \to 0 \quad \text{as} \quad s \to + \infty.
\end{equation}
Let us now focus on the proof of \eqref{equ:pr1} in order to conclude Proposition \ref{prop:1}. For this purpose, we introduce the following norm: for $r \geq 0$, $q > 1$ and $f \in L^q_{loc}(\mathbb{R}^n)$, 
$$L_\rho^{q,r}(f) \equiv \sup_{|\xi| \leq r }\left(\int_{\mathbb{R}^n}|f(y)|^q\rho(y - \xi)dy \right)^\frac{1}{q}.$$
Following the idea in \cite{VELcpde92}, we shall make estimates on solution of \eqref{eq:Qa} in the $L^{2,r(\tau)}_\rho$ norm where $r(\tau) = K_0e^{\frac{\tau - \bar{s}}{2}} \leq K_0\sqrt{\tau}$. Particularly, we have the following:
\begin{lemm}\label{lemm:gt} Let $s$ be large enough and $\bar{s}$ is defined by $e^{s - \bar{s}} = s$. Then for all $\tau \in [\bar{s}, s]$ and for all $K_0 > 0$, it holds that 
$$ g(\tau) \leq 
C_0\left( e^{\tau - \bar{s}}\epsilon(\bar{s}) + \int_{\bar{s}}^{(\tau - 2K_0)_+}\frac{e^{(\tau - t - 2K_0)}g^2(t)}{\left(1 - e^{-(\tau - t - 2K_0)} \right)^{1/20}}dt \right)$$
where $g(\tau) = L^{2,r(K_0,\tau,\bar{s})}_\rho (Q(\tau))$, $r(K_0, \tau, \bar{s}) = K_0e^{\frac{\tau - \bar{s}}{2}}$, $\epsilon(s) = \mathcal{O}\left(\frac{1}{s^{a}} \right) + \mathcal{O}\left(\frac{\log s}{s^2} \right)$, $C_0 = C_0(C_*, M_0, K_0)$ and $z_+ = \max\{z,0\}$.
\end{lemm}
\begin{proof} Multiplying \eqref{eq:Qa} by $\alpha(\tau) = e^{\int_{\bar{s}}^\tau \frac{C_*}{t}}dt$, then we write $Q(y,\tau)$ for all $(y,\tau) \in \mathbb{R}^n \times [\bar{s},s]$ in the integration form:
\begin{align*}
Q(y,\tau) &= \alpha(\tau)S_\mathcal{L}(\tau - \bar{s})Q(y,\bar{s})\\
&+ C_*\int_{\bar{s}}^\tau \alpha(\tau)S_\mathcal{L}(\tau - t)\left(Q^2 + \frac{|y|^2}{t^2} + \frac{1}{t^2}+ \frac{1}{t^a} + \mathbf{1}_{\{|y| \geq 2K_0\sqrt{t}\}} \right)dt,
\end{align*}
where $S_\mathcal{L}$ is the linear semigroup corresponding to the operator $\mathcal{L}$.\\
Next, we take the $L^{2, r(K_0, \tau, \bar{s})}_\rho$-norms both sides in order to get the following:
\begin{align*}
g(\tau) &\leq C_0L^{2,r}_\rho \big[S_\mathcal{L}(\tau -\bar{s})Q(\bar{s})\big] + C_0\int_{\bar{s}}^\tau L^{2,r}_\rho \big[S_\mathcal{L}(\tau -t)Q^2(t)\big]dt \\
&+ C_0\int_{\bar{s}}^\tau L^{2,r}_\rho \left[S_\mathcal{L}(\tau -t)\left(\frac{|y|^2}{t^2} + \frac{1}{t^2}+ \frac{1}{t^a}\right) \right]dt\\
&+ C_0\int_{\bar{s}}^\tau L^{2,r}_\rho \big[S_\mathcal{L}(\tau -t)\mathbf{1}_{\{|y| \geq 2K_0\sqrt{t}\}}\big]dt\\
& \equiv J_1 + J_2 + J_3 + J_4.
\end{align*}
Proposition 2.3 in \cite{VELcpde92} (with a slight modification for the estimate of $J_3$) yields
$$|J_1| \leq C_0e^{\tau - \bar{s}}\|Q(\bar{s})\|_{L^2_\rho} = e^{\tau - \bar{s}}\mathcal{O}(\epsilon(\bar{s})) \quad \text{as} \quad \bar{s} \to + \infty,$$
$$|J_2| \leq \frac{C_0}{\bar{s}^2}e^{\tau - \bar{s}} + C_0\int_{\bar{s}}^{(\tau - 2K_0)_+}\frac{e^{(\tau - t - 2K_0)}}{\left(1 - e^{-(\tau - t - 2K_0)} \right)^{1/20}}\left[L_\rho^{2,r(K_0, t, \bar{s})}Q(t)\right]^2dt,$$
$$|J_3| \leq C_0e^{\tau - \bar{s}}\left(\frac{1}{\bar{s}^2} + \frac{1}{\bar{s}^a}\right)(1 + (\tau - \bar{s})),$$
$$|J_4| \leq C_0e^{-\delta \bar{s}}, \quad \text{where} \quad \delta = \delta(K_0) > 0.$$
Putting together the estimates on $J_i, i =1, 2, 3, 4$, we conclude the proof of Lemma \ref{lemm:gt}.
\end{proof}
\noindent We now use the following Gronwall lemma from Vel\'azquez \cite{VELcpde92}:
\begin{lemm}[\textbf{Vel\'azquez \cite{VELcpde92}}]\label{lem:Gro} Let $\epsilon, C, R$ and $\delta$ be positive constants, $\delta \in (0,1)$. Assume that $H(\tau)$ is a family  of continuous functions satisfying
$$\mathcal{H}(\tau) \leq \epsilon e^\tau + C\int_0^{(\tau - R)_+} \frac{e^{\tau -s} \mathcal{H}^2(s)}{\left(1 - e^{-(\tau - s - R)} \right)^\delta}ds, \quad \text{for $\tau > 0$}.$$
Then there exist $\theta = \theta(\delta, C, R)$ and $\epsilon_0 = \epsilon_0(\delta, C, R)$ such that for all $\epsilon \in (0,\epsilon_0)$ and any $\tau$ for which $\epsilon e^\tau \leq \theta$, we have
$$\mathcal{H}(\tau) \leq 2\epsilon e^\tau.$$
\end{lemm}

\noindent Applying Lemma \ref{lem:Gro} with $\mathcal{H} \equiv g$, we see from Lemma \ref{lemm:gt} that for $s$ large enough,
$$g(\tau) \leq 2C_0e^{\tau - \bar{s}}\epsilon(\bar{s}), \quad \forall \tau \in [\bar{s},s].$$
If $\tau = s$, then $e^{s - \bar{s}} = s$, $r = K_0\sqrt{s}$ and 
$$g(s) \equiv L^{2, K_0\sqrt{s}}_\rho\big(Q(s)\big) = \mathcal{O}\left(\frac{1}{s^{a-1}} \right) + \mathcal{O}\left(\frac{\log s}{s} \right),\; \text{as} \quad s \to + \infty.$$
By using the regularizing effects of the semigroup $S_{\mathcal{L}}$ (see Proposition 2.3 in \cite{VELcpde92}), we then obtain
$$\sup_{|y| \leq \frac{K_0\sqrt{s}}{2}} Q(y,s) \leq C'(C_*,K_0, M_0)L^{2,K_0\sqrt{s}}_\rho(Q(s)) =  \mathcal{O}\left(\frac{1}{s^{a-1}} \right) + \mathcal{O}\left(\frac{\log s}{s} \right),$$
as $s \to +\infty$, which concludes the proof of Proposition \ref{prop:1}.
\end{proof}

\noindent \textbf{Part 2: Case $iii)$ in Theorem \ref{theo:refinedasymptotic} and the asymptotic behavior in the $ye^{-\left(\frac{1}{2} - \frac{1}{m} \right)s}$ variable}.\\ 
We give the proof of $ii)$ of Theorem \ref{theo:pro} in this part. Since we work in the scale $e^{-\mu s}$ for $\mu > 0$ in the case where $iii)$ in Theorem \ref{theo:refinedasymptotic} occurs, we need new ideas to get rid of the term in the scale $\frac 1 s$ coming from the strong perturbation.\\ 

Let us restate $ii)$ of Theorem \ref{theo:pro} in the following proposition:
\begin{prop}[\textbf{Asymptotic behavior in the $ye^{-\left(\frac{1}{2} - \frac{1}{m} \right)s}$ variable}] \label{prop:2} Assume that $w$ is a solution of equation \eqref{equ:divw1} and satisfies $iii)$ of Theorem \ref{theo:refinedasymptotic}. Then, for all $K > 0$, 
\begin{equation}\label{equ:asypsim}
\sup_{|\xi| \leq K} \left|w(\xi e^{\left(\frac{1}{2} - \frac{1}{m}\right)s},s) - \psi_m(\xi)\right| \to 0, \quad \text{as} \quad s \to + \infty,
\end{equation}
where $\psi_m(\xi)= \kappa\left(1 + \kappa^{-p}\sum \limits_{|\alpha| = m}c_\alpha \xi^\alpha \right)^{-\frac{1}{p-1}}$, and $m \geq 4$ is an even integer.
\end{prop}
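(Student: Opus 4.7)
The plan is to follow Vel\'azquez's strategy from \cite{VELcpde92}, but with the crucial modification of linearizing around the implicit profile $\phi(s)$ (solution of the ODE \eqref{equ:phiODE}) rather than $\kappa$, in order to circumvent the slowly decaying $\mathcal{O}(s^{-a})$ terms that the perturbation $h$ injects. Concretely, I would introduce the extended profile
$$\Phi_m(y,s) = \phi(s)\left(1 + \phi(s)^{-p}\sum_{|\alpha|=m} c_\alpha\, y^\alpha\, e^{-(\frac{m}{2}-1)s}\right)^{-\frac{1}{p-1}},$$
whose restriction to the self-similar variable $\xi = y e^{-(\frac12-\frac1m)s}$ equals $\psi_m(\xi) + o(1)$ since $\phi(s)\to\kappa$, and then set $Q = w - \Phi_m$. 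Using that $\phi$ solves \eqref{equ:phiODE}, a direct computation shows that $Q$ satisfies an equation of the form
$$\partial_s Q = (\mathcal{L} + \Omega(y,s))\,Q + \mathcal{N}(Q,y,s) + \mathcal{R}(y,s),$$
where $\Omega(y,s) = p(\Phi_m^{p-1}-\kappa^{p-1}) + e^{-s}h'(e^{s/(p-1)}\Phi_m)$, the nonlinear term is $\mathcal{N}=\mathcal{O}(Q^2)$ in the region of interest, and the remainder $\mathcal{R}$ collects the contributions from $\Delta\Phi_m$, $\frac{y}{2}\cdot\nabla\Phi_m$, and the cross-terms coming from the perturbation $h$ evaluated at $\Phi_m$ versus $\phi$. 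Crucially, because the $s^{-a}$ piece cancels between $\Phi_m$ and $\phi$, the remainder $\mathcal{R}$ will be exponentially small in the relevant region (up to algebraic powers of $\xi$), rather than only $\mathcal{O}(s^{-a})$.

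The second step would be to run the iteration scheme of \cite{VELcpde92} in the weighted local norm $L^{q,r}_\rho(f) = \sup_{|\xi|\le r}\bigl(\int|f|^q\rho(y-\xi)dy\bigr)^{1/q}$ with $q=2$ and radius $r(\tau) = K_0 e^{(\frac12-\frac1m)(\tau-\bar s)}$, taking $\bar s$ defined by $e^{(\tau-\bar s)(1-\frac2m)} = e^{(\frac{m}{2}-1)s}$ at $\tau=s$ (so that $r(s)=K_0 e^{(\frac12-\frac1m)s}$). Writing $Q$ via Duhamel's formula with the semigroup $S_\mathcal{L}$ and using the dispersive estimates of Proposition 2.3 in \cite{VELcpde92}, case $iii)$ of Theorem \ref{theo:refinedasymptotic} provides the initial smallness $\|Q(\bar s)\|_{L^2_\rho} = o(e^{-(\frac{m}{2}-1)\bar s})$, while the contribution of $\mathcal{R}$ is of the same order by construction. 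A quadratic Gronwall argument in the spirit of Lemma \ref{lem:Gro} then propagates the smallness of $L^{2,r(\tau)}_\rho(Q(\tau))$ from $\tau=\bar s$ to $\tau=s$, and the regularizing effect of $S_\mathcal{L}$ converts this $L^2_\rho$-bound into the pointwise convergence \eqref{equ:asypsim} on $\{|y|\le \frac{K_0}{2}e^{(\frac12-\frac1m)s}\}$. The restriction $m\ge 4$ and parity of $m$ are obtained as in \cite{VELcpde92} by checking that the limiting profile $\psi_m(\xi)$ must be well-defined and bounded on the whole $\xi$-space, which forces $\sum_{|\alpha|=m}c_\alpha\xi^\alpha\ge 0$; this excludes $m=3$ and any odd $m$.

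The main obstacle, as emphasized in the excerpt, is the mismatch of scales: the nonlinearity and the remainder produced by a naive linearization around $\kappa$ would contain terms of size $s^{-a}$, which are catastrophic when one aims at exponential decay $e^{-(\frac{m}{2}-1)s}$ in a region of spatial scale $e^{(\frac12-\frac1m)s}$. The choice of $\Phi_m$ based on the implicit profile $\phi$ is precisely what makes the $s^{-a}$ terms cancel at every level of the analysis: both in the coefficient $\Omega$ (where $p(\Phi_m^{p-1}-\kappa^{p-1})$ alone would be $\mathcal{O}(s^{-a})$ but combines with $e^{-s}h'$ so that the $s^{-a}$ parts compensate via the ODE satisfied by $\phi$) and in the remainder $\mathcal{R}$. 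Verifying this cancellation rigorously — using the structural hypothesis \eqref{equ:h} on $h$, the estimate \eqref{equ:estimateH}, and the expansion \eqref{equ:solphi} of $\phi$ — together with bookkeeping the various $\xi$-polynomial weights that appear when estimating $\mathcal{R}$ on the set $\{|y|\le K_0 e^{(\frac12-\frac1m)s}\}$, is the technical heart of the argument and the main deviation from the unperturbed analysis of \cite{VELcpde92}.
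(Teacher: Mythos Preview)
Your overall strategy is the right one and matches the paper's: linearize around a profile built on the implicit ODE solution $\phi(s)$ rather than $\kappa$, then run the Vel\'azquez weighted-$L^{2,r}_\rho$ iteration and close with the regularizing effect of $S_{\mathcal L}$. However, two points need correction.

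First, and most importantly, your profile $\Phi_m$ is missing the Hermite correction term. The paper takes
\[
\varphi(y,s)=\frac{\phi(s)}{\kappa}\Bigl[G\bigl(ye^{-(\frac12-\frac1m)s}\bigr)+e^{-(\frac m2-1)s}\Bigl(\sum_{|\alpha|=m}c_\alpha y^\alpha-\sum_{|\alpha|=m}c_\alpha H_\alpha(y)\Bigr)\Bigr]\frac{\phi(s)}{\kappa},
\]
so that a Taylor expansion reproduces exactly the asymptotics $w=\phi(s)-e^{-(\frac m2-1)s}\sum c_\alpha H_\alpha(y)+o(\cdot)$ of case $(iii)$, yielding $\|w-\varphi\|_{L^2_\rho}=o\bigl(e^{-(\frac m2-1)s}\bigr)$. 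Your $\Phi_m$ expands instead to $\phi(s)-e^{-(\frac m2-1)s}\sum c_\alpha y^\alpha+\cdots$, so $w-\Phi_m$ contains $e^{-(\frac m2-1)s}\sum c_\alpha(y^\alpha-H_\alpha(y))$, which is only $O\bigl(e^{-(\frac m2-1)s}\bigr)$ in $L^2_\rho$. After propagation by the factor $e^{\tau-\bar s}$ up to $\tau=s$ with $\bar s=\tfrac{2s}{m}$, this $O$ becomes $O(1)$ rather than $o(1)$, and the convergence \eqref{equ:asypsim} does not follow. You must include the $\sum c_\alpha(y^\alpha-H_\alpha(y))$ correction.

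Second, your bookkeeping of $\bar s$ and $r(\tau)$ is garbled: the correct choices, dictated by the drift $-\frac y2\cdot\nabla$, are $r(\tau)=K_0e^{(\tau-\bar s)/2}$ (not $e^{(\frac12-\frac1m)(\tau-\bar s)}$) and $\bar s=\tfrac{2s}{m}$, so that $r(s)=K_0e^{(\frac12-\frac1m)s}$. Also, the cancellation you advertise in $\Omega$ is not actually used: the paper simply bounds $\omega\le C/s$ and absorbs this linear term via the harmless factor $e^{\int_{\bar s}^\tau C_*/t\,dt}$; what matters is that the \emph{remainder} $\mathcal R$, thanks to the ODE \eqref{equ:phiODE} for $\phi$, satisfies $|R|\le C\bigl(s^{-a}e^{-(\frac m2-1)s}(|y|^m+1)+\chi\bigr)$, which after integration against $S_{\mathcal L}$ contributes $e^{\tau-\bar s}\,o(e^{-(\frac m2-1)\bar s})$.
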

\begin{proof} Note that the proof will proceed in the same way as for the proof of Proposition \ref{prop:1}. Let us introduce $q = w - \varphi$, where
\begin{equation}\label{equ:defivarphi}
\varphi(y,s) = \frac{\phi(s)}{\kappa}J(y,s),
\end{equation}
with
$$J(y,s) = \frac{\phi(s)}{\kappa}\left[G\left(ye^{-\left(\frac{1}{2} - \frac{1}{m}\right)s}\right) + e^{-\left(\frac{m}{2} - 1\right)s} \left( \sum_{|\alpha| = m}c_\alpha y^\alpha - \sum_{|\alpha| = m}c_{\alpha}H_\alpha(y)\right)\right],$$
and $G(\xi) =  \kappa\left(1 + \kappa^{-p}\sum \limits_{|\alpha| = m}c_\alpha \xi^\alpha \right)^{-\frac{1}{p-1}}$ satisfying 
\begin{equation}\label{eq:Gxi}
-\frac{\xi}{m}\cdot \nabla G(\xi) + G^p(\xi) = \frac{G(\xi)}{p-1}.
\end{equation}
Note that Vel\'azquez \cite{VELcpde92} takes $\varphi = J$, and if we do the same, we will obtain some terms in the scale of $\frac 1 s$, much stronger than the $e^{-\mu s}$ scale that we intended to work in.\\

\noindent Using Taylor's formula in \eqref{equ:defivarphi} and recalling from Lemma \ref{ap:lemmA2} that fact that $\frac{\phi(s)}{\kappa} = 1 + \mathcal{O}(s^{-a})$ as $s \to + \infty$, we have by $iii)$ of Theorem \ref{theo:refinedasymptotic},
\begin{equation}
\|q                                                                                                                                                                                                                                                                                                                                                                                                                                                                                                                                                                                                                                                                                                                                                                                                                                                                                                                                                                                                                                                                                                                                                                                                                                                                                                                                                                                                                                                                                                                                                                                                                                                                                                                                                                                                                                                                                                                                                                                                                                                                                                                                                                                                                                                                                                                                                                                                                                                                                                                                                                                                                                                                                                                                                                                                                                                                                                                                                                                                                                                                                                                                                                                                                                                                                                                                                    (y,s)\|_{L^2_\rho} = o\left(e^{-\left(\frac{m}{2} - 1\right)s}\right), \quad \text{as $s \to +\infty$}.
\end{equation}
Straightforward calculations based on equation \eqref{equ:divw1} yield
\begin{equation}\label{eq:Wa2}
\partial_s q = (\mathcal{L} + \omega)q + F(q) + G(q,s) + R(y,s), \quad \forall (y,s) \in \mathbb{R}^n \times [-\log T, +\infty), 
\end{equation}
where 
\begin{align*}
\omega(y,s) & = p(\varphi^{p-1} - \kappa^{p-1}) + e^{-s}h'\left(e^\frac{s}{p-1}\varphi\right),\\
F(q)& = |q + \varphi|^{p-1}(q + \varphi) - \varphi^p -p\varphi^{p-1}q,\\
G(q,s)& = e^{-\frac{ps}{p-1}}\left[h\left(e^\frac{s}{p-1}(q + \varphi)\right) - h\left(e^\frac{s}{p-1}\varphi\right) - e^\frac{s}{p-1}h'\left(e^\frac{s}{p-1}\varphi\right)q\right],\\
R(y,s)&= -\partial_s \varphi + \Delta \varphi - \frac{y}{2}\cdot \nabla \varphi - \frac{\varphi}{p-1} + \varphi^p + e^{-\frac{ps}{p-1}}h\left(e^\frac{s}{p-1}\varphi\right).
\end{align*}
Fix now $K_0 > 0$ and define $\chi(y,s) = 1$ if $|y| \geq 2K_0e^{\left(\frac{1}{2} - \frac{1}{m} \right)s}$ and $\chi(y,s) = 0$ otherwise. Then, using Taylor's formula for $\xi = ye^{-\left(\frac{1}{2} - \frac{1}{m} \right)s}$ bounded, and noticing from \eqref{equ:estimateH}, we then obtain for all $s \geq s_0$, 
$$\omega(y,s) \leq \frac{C_1}{s},$$
$$|F(q)| + |G(q,s)| \leq C_1\left(q^2 + \chi(y,s)\right),$$
where $C_1 = C_1(M_0,K_0) > 0$.\\
To estimate $R(y,s)$, we write $R(y,s)$ as follow: 
\begin{align*}
R(y,s) & = \frac{\phi(s)}{\kappa}\left(-\partial_s J + \Delta J - \frac{y}{2}\cdot \nabla J - \frac{J}{p-1} + J^p\right)\\
& + \left(- \frac{\phi'(s)}{\kappa}J - \frac{\phi(s)}{\kappa}J^p + \varphi^p + e^{-\frac{ps}{p-1}}h\left(e^\frac{s}{p-1}\varphi\right)\right)\equiv \frac{\phi(s)}{\kappa} I + II.
\end{align*}
By using Taylor's formula, \eqref{eq:Gxi} and Hermite's equation, i.e. 
$$\mathcal{L}H_\alpha(y) = \left(1 - \frac{|\alpha|}{2}\right)H_\alpha(y),$$
it was proved in \cite{VELcpde92} (see Proposition 2.4) that
$$I \leq C e^{-(m-2)s} (|y|^{2m-2} + 1)(1 - \chi(y,s)) + C\chi(y,s), \quad \text{for some $C > 0$}.$$
It remains to estimate $II$. To do so, we write $J(y,s)$ for $|y|e^{\left(\frac{1}{2} - \frac{1}{m}\right)s}$ bounded in the form:
$$J(y,s) = \kappa - e^{-\left(\frac{m}{2}-1\right)s}\sum_{|\alpha| = m}c_{\alpha}H_\alpha(y) + \mathcal{O}\left(e^{-(m - 2)s} |y|^{2m} \right).$$
We then use Taylor's formula in $II$, \eqref{equ:estimateH}, and the fact that $\phi(s)$ satisfies \eqref{equ:phiODE} to find that 
$$II \leq \frac{C}{s^a}e^{-\left(\frac{m}{2}-1 \right)s}(|y|^m + 1)(1 - \chi(y,s)) + C\chi(y,s),  \quad \text{for some $C > 0$}.$$
Note that $e^{-(m-2)s}|y|^{2m-2}(1 - \chi(y,s)) \leq \frac{1}{s^a}e^{-\left(\frac{m}{2}-1 \right)s}|y|^m (1 - \chi(y,s))$ for $s$ large, we then obtain 
$$|R(y,s)| \leq C\left(\frac{1}{s^a}e^{-\left(\frac{m}{2}-1 \right)s}(|y|^m + 1)(1 - \chi(y,s)) + \chi(y,s)\right), \;\; \text{for some $C > 0$}.$$
Let $Q = |q|$ and use Kato's inequality, we obtain from \eqref{eq:Wa2} and from the above estimates that: for all $K_0 > 0$ fixed, there are $C_* = C_*(K_0, M_0) > 0$ and a time $s' > 0$ large enough such that for all $s \geq s_* = \max\{s',-\log T\}$, 
\begin{equation}
\partial_s Q \leq \left(\mathcal{L} + \frac{C_*}{s}\right)Q + C_*\left(Q^2 + \frac{1}{s^a}e^{-\left(\frac{m}{2}-1 \right)s}(|y|^m + 1) + \chi(y,s)\right), \;\; \forall y \in \mathbb{R}^n.
\end{equation}
We claim the following:
\begin{lemm}\label{lemm:gt12} Let $s$ be large enough and $\bar{s} = \frac{2s}{m}$. Then for all $\tau \in [\bar{s}, s]$, $\tau - \bar{s} \geq 2$ and for all $K_0 > 0$, it holds that 
$$ g(\tau) \leq 
e^{\tau - \bar{s}}\left(o\left(e^{-\left(\frac{m}{2} - 1\right)\bar{s}}\right) + C'\int_{\bar{s}}^{(\tau - 2K_0)_+}\frac{e^{(\tau - t - 2K_0)}g^2(t)}{\left(1 - e^{-(\tau - t - 2K_0)} \right)^{1/20}}dt \right)$$
where $g(\tau) = L^{2,r(K_0,\tau,\bar{s})}_\rho (Q(\tau))$, $r(K_0, \tau, \bar{s}) = K_0e^{\frac{\tau - \bar{s}}{2}}$, $C' = C'(C_*, M_0, K_0)$ and $z_+ = \max\{z,0\}$.
\end{lemm}
\begin{proof} Proceeding as in the proof of Lemma \eqref{lemm:gt}, we write 
\begin{align*}
L^{2,r}_\rho(Q) &\leq C_0L^{2,r}_\rho \big[S_\mathcal{L}(\tau -\bar{s})Q(\bar{s})\big] + C_0\int_{\bar{s}}^\tau L^{2,r}_\rho \big[S_\mathcal{L}(\tau -t)Q^2(t)\big]dt \\
&+ C_0\int_{\bar{s}}^\tau L^{2,r}_\rho \left[S_\mathcal{L}(\tau -t)\left(\frac{1}{t^a}e^{-\left(\frac{m}{2} - 1 \right)t}(|y|^m + 1) \right) \right]dt\\
&+ C_0\int_{\bar{s}}^\tau L^{2,r}_\rho \big[S_\mathcal{L}(\tau -t)\chi(y,t)\big]dt\\
& \equiv J_1 + J_2 + J_3 + J_4.
\end{align*}
One can show that for $\bar{s}$ large enough (see Proposition 2.4 in \cite{VELcpde92}),
$$|J_1| = e^{\tau - \bar{s}}o\left(e^{(1 - m/2)\bar{s}}\right),$$
$$|J_2| \leq Ce^{2\tau - 2(m-1)\bar{s}} + C \int_{\bar{s}}^{(\tau - 2K_0)_+}\frac{e^{(\tau - t - 2K_0)}g^2(t)}{\left(1 - e^{-(\tau - t - 2K_0)} \right)^{1/20}}dt,$$
$$|J_3| \leq Ce^{\tau - \bar{s}}\frac{e^{(1 - m/2)\bar{s}}}{\bar{s}^a} \left( 1 + \tau - \bar{s}\right) =  e^{\tau - \bar{s}}o\left(e^{(1 - m/2)\bar{s}}\right),$$
$$|J_4|\leq Ce^{-\theta e^{(1 - 2/m)\bar{s}}}\quad \text{for some $\theta > 0$}.$$
Putting together the above estimates yields the desired result. This ends the proof of Lemma \ref{lemm:gt12}.
\end{proof}

\noindent Applying now Lemma \ref{lem:Gro} and Lemma \ref{lemm:gt12}, we obtain for $s$ large enough,
$$g(\tau) \leq e^{\tau - \bar{s}}o\left( e^{-(m/2 - 1)\bar{s}} \right), \quad \forall \tau \in [\bar{s},s].$$
Since $\bar{s} = \frac{2s}{m}$, if we set $\tau = s$, then $r = K_0e^{\left(\frac{1}{2} - \frac{1}{m}\right)s}$ and 
$$g(s) \equiv L^{2,r(s)}_\rho (Q(s)) = o(1) \quad \text{as} \quad s \to +\infty.$$
By the regularizing effects of the semigroup $S_{\mathcal{L}}$, we then obtain
$$\sup_{|y| \leq \frac{K_0}{2}e^{\left(\frac{1}{2} -\frac{1}{m}\right)s}} Q(y,s) \leq C'(C_*,K_0, M_0)L^{2,r(s)}_\rho(Q(s)) \to 0,\; \text{as} \; s \to +\infty,$$
From \eqref{equ:defivarphi}, we see that for all $|y| \leq \frac{K_0}{2}e^{\left(\frac{1}{2} -\frac{1}{m}\right)s}$, 
$$\left|w(y,s) - \frac{\phi(s)}{\kappa}G\left(ye^{-\left(\frac{1}{2} - \frac{1}{m} \right)s}\right)\right| \leq Q(y,s) + Ce^{-\left(1 - \frac{2}{m}\right)s},$$
Noticing from Lemma \ref{ap:lemmA3} that $\frac{\phi(s)}{\kappa} = 1 + \mathcal{O}(s^{-a})$ as $s \to + \infty$, we obtain 
$$\sup_{|y| \leq \frac{K_0}{2}e^{\left(\frac{1}{2} -\frac{1}{m}\right)s}}\left|w(y,s) - G\left(ye^{-\left(\frac{1}{2} - \frac{1}{m} \right)s}\right)\right| = o(1), \quad \text{as} \quad s \to +\infty. $$
It remains to show that $m$ is even. Indeed, from \eqref{equ:asypsim}, we can see that if $m$ is not even, there would exist $\xi_0\in \mathbb{R}^n$ such that $w\left(\xi_0 e^{\left(\frac{1}{2} - \frac{1}{m}\right)s},s\right) \to \psi_m(\xi_0) \to +\infty$ as $s \to +\infty$, which contradicts the fact that $w$ is bounded as stated  in \eqref{equ:boundw}. Therefore, $m$ must be even. This concludes the proof of Proposition \ref{prop:2} and Theorem \ref{theo:pro} too.
\end{proof}

\appendix
\renewcommand*{\thesection}{\Alph{section}}
\counterwithin{theo}{section}
\section{Appendix A}\label{app:A}
The following lemma shows the asymptotic behavior of the solution of the associated ODE of equation \ref{equ:problem}:
\begin{lemm}\label{ap:lemmA1}
Let $v$ be a blow-up solution of the following ordinary differential equation:
\begin{equation}\label{ap:ODEv}
v'(t) = v^p(t) + h(v), \quad v(T) = +\infty \quad \text{for some $T > 0$},
\end{equation}
where $h$ is defined in \eqref{equ:h}. Then $v$ satisfies 
$$v(t) \sim \kappa (T - t)^{-\frac{1}{p-1}} \quad \text{as $t \to T$, where $\kappa = (p-1)^{-\frac{1}{p-1}}$}.$$
\end{lemm}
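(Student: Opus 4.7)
The plan is to use separation of variables, exploiting the fact that $h(v)$ is a lower-order perturbation of $v^p$ at infinity. Since $v(t)\to+\infty$ as $t\to T$, the bound \eqref{equ:h} yields $h(v)/v^p=O(\log^{-a}(2+v^2))+O(v^{-p})=o(1)$ as $v\to+\infty$. Hence $v^p+h(v)>0$ for $v$ large enough, so there exists $t_0<T$ such that $v$ is strictly increasing on $[t_0,T)$ and the map $t\mapsto v(t)$ is invertible on this interval.

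I would then introduce the auxiliary function
$$\phi(v)=\int_v^{+\infty}\frac{dz}{z^p+h(z)},\qquad v\ge v_0:=v(t_0),$$
which is well defined since the integrand is comparable to $z^{-p}$ at infinity and $p>1$. Differentiating along the trajectory gives $\frac{d}{dt}\phi(v(t))=-1$, and the blow-up assumption forces $\phi(v(t))\to 0$ as $t\to T$. Integration in time then yields the key identity
$$T-t=\phi(v(t))\qquad\text{for all }t\in[t_0,T).$$

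The heart of the argument is the asymptotic expansion
$$\phi(v)=\frac{v^{1-p}}{p-1}\bigl(1+o(1)\bigr)\quad\text{as }v\to+\infty.$$
To prove it I would write $\tfrac{1}{z^p+h(z)}=\tfrac{1}{z^p}-\tfrac{h(z)}{z^p(z^p+h(z))}$. The first term integrated from $v$ to $\infty$ gives the main contribution $v^{1-p}/(p-1)$, while \eqref{equ:h} bounds the second integrand in absolute value by $Cz^{-p}\log^{-a}(2+z^2)+Cz^{-2p}$ for $z$ large. Both remainders integrate to $o(v^{1-p})$; more precisely, the first one gives $O\!\bigl(v^{1-p}(\log v)^{-a}\bigr)$. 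Combining the expansion with the identity $T-t=\phi(v(t))$ gives $(p-1)(T-t)\,v(t)^{p-1}\to 1$, which rearranges into $v(t)\sim\kappa(T-t)^{-1/(p-1)}$.

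I do not anticipate a serious obstacle; the argument is a textbook separation-of-variables asymptotic. The only minor technical point is the remainder estimate
$$\int_v^{+\infty}\frac{dz}{z^p\log^a(2+z^2)}=O\!\left(\frac{v^{1-p}}{\log^a v}\right)\quad\text{as }v\to+\infty,$$
which follows either from the rescaling $z=vw$ combined with dominated convergence, or from a single integration by parts differentiating $(\log z)^{-a}$.
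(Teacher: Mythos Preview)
Your proof is correct and takes essentially the same separation-of-variables route as the paper: both use $h(v)/v^p\to 0$ to reduce to $v'/v^p\to 1$ near $T$ and then integrate from $t$ to $T$. The paper executes this via a direct $\varepsilon$-sandwich on $v'/v^p$ rather than expanding the exact integral $\int_v^\infty dz/(z^p+h(z))$, so it is a line shorter but carries no remainder information, whereas your version even gives the sharper error $O((\log v)^{-a})$.
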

\begin{proof}
Divide \eqref{ap:ODEv} by $v^p$ and note that $\frac{h(v)}{v^p} \to 0$ as $v \to +\infty$, we see that for all $\varepsilon > 0$, there exists a number $\delta = \delta(\epsilon) > 0$ such that
\begin{equation}\label{equ:odeV}
\left | \frac{v'}{v^p} -1 \right| \leq \varepsilon, \quad \forall t \in [T-\delta, T). \end{equation}
Solving \eqref{equ:odeV} with noting that $v(T) = +\infty$ yields
\begin{align*}
(1+\varepsilon)^{-\frac{1}{p-1}}\kappa(T-t)^{-\frac{1}{p-1}} &\leq v(t) \leq (1-\varepsilon)^{-\frac{1}{p-1}}\kappa(T-t)^{-\frac{1}{p-1}}, \quad \forall t \in [T-\delta, T).
\end{align*}
This concludes the proof of Lemma \ref{ap:lemmA1}.
\end{proof}
\noindent The following lemma gives us an estimation of the perturbation term in equation \eqref{equ:divw1}:
\begin{lemm} \label{ap:lemmA2} Let $h$ be the function defined in \eqref{equ:h}, then it holds that
\begin{equation*}
j =0, 1,\quad e^{-\frac{(p-j)s}{p-1}}\left|h^{(j)}\left(e^\frac{s}{p-1}w\right) \right| \leq  Cs^{-a}\left(|w|^{p-j} + 1\right), \quad \forall s \geq \hat{s},
\end{equation*}
where $C = C(a,p,\mu,M) > 0$ and $\hat{s} = \hat{s}(a,p) > 0$ such that $\frac{\log s}{s} \leq \frac{p}{a(p-1)}$ for all $s \geq \hat{s}$.
\end{lemm}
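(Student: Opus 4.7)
The idea is to substitute $z = e^{s/(p-1)}w$ directly in the pointwise bound \eqref{equ:h} and then perform a dichotomy on the size of $|w|$ so that the denominator $\log^a(2+z^2)$ can be compared with $s^a$. Multiplying the hypothesis on $|h^{(j)}(z)|$ by $e^{-(p-j)s/(p-1)}$ gives
\begin{equation*}
e^{-\frac{(p-j)s}{p-1}}\bigl|h^{(j)}(e^{s/(p-1)}w)\bigr|
 \;\leq\; \frac{M\,|w|^{p-j}}{\log^{a}\!\bigl(2+e^{2s/(p-1)}w^{2}\bigr)} \;+\; M\,e^{-\frac{(p-j)s}{p-1}}.
\end{equation*}
The second summand is exponentially small in $s$, so that choosing $\hat s$ exactly as in the statement, i.e.\ large enough that $a\log s \leq ps/(p-1)$, ensures $Me^{-(p-j)s/(p-1)}\leq C\,s^{-a}$ for $j=0,1$ (for $j=1$ the same threshold, possibly enlarged, works). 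This already contributes the ``$+1$'' part of the claimed bound.

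The remaining task is to control the first summand. I would split according to whether $|w|\geq e^{-s/(2(p-1))}$ or not. In the first regime one has $z^2 \geq e^{s/(p-1)}\geq 2$ for $s$ large, hence
\begin{equation*}
\log(2+z^2) \;\geq\; \log(z^2) \;=\; 2\log|z| \;\geq\; \frac{s}{p-1},
\end{equation*}
which yields $\dfrac{M|w|^{p-j}}{\log^{a}(2+z^2)} \leq M(p-1)^{a}\,s^{-a}\,|w|^{p-j}$, of the desired form. In the opposite regime $|w|< e^{-s/(2(p-1))}$ we simply use $\log(2+z^2)\geq \log 2$ and observe that $|w|^{p-j}$ is then at most $e^{-(p-j)s/(2(p-1))}$ (since $p>j$ for $j=0,1$), so the fraction is bounded by a constant times this exponentially small quantity, which is $\leq s^{-a}$ for $s\geq\hat s$ (possibly enlarging $\hat s$ in a way that still depends only on $a,p$).

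Combining the two regimes together with the bound on the second summand produces $Cs^{-a}(|w|^{p-j}+1)$ as claimed. There is no real obstacle here; the only mild subtlety is the bookkeeping of the threshold $\hat s$ so that both the exponential-versus-polynomial comparison and the case $|w|<e^{-s/(2(p-1))}$ are handled with a single condition of the stated form, which is possible because all of the inequalities involved are ultimately implied by $\log s /s$ being sufficiently small, depending only on $a$ and $p$.
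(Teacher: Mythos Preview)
Your proof is correct and follows essentially the same approach as the paper: both start from the pointwise bound \eqref{equ:h}, then perform a dichotomy on the size of $|w|$ (the paper uses the threshold $w^2e^{s/(p-1)}\gtrless 4$, you use $|w|\gtrless e^{-s/(2(p-1))}$, which is the same up to a harmless constant), bounding the logarithm from below by $s/(p-1)$ in the large regime and exploiting the exponential smallness of $|w|^{p-j}$ in the small regime. The handling of the additive term $Me^{-(p-j)s/(p-1)}$ and the bookkeeping on $\hat s$ are also the same in spirit.
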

\begin{proof}
We have from \eqref{equ:h} that for $j = 0, 1,$
$$e^{-\frac{(p-j)s}{p-1}}\left|h^{(j)}\left(e^\frac{s}{p-1}w \right) \right| \leq C'(M,\mu)\left( \frac{|w|^{p-j}}{\log^a\left(2 + e^\frac{2s}{p-1}w^2\right)} + e^{-\frac{(p - j)s}{p-1}}\right).$$
Considering the first case $w^2e^{\frac{s}{p-1}} \geq 4$, we have
$$\frac{|w|^{p-j}}{\log^a\left(2 + e^{\frac{2s}{p-1}}w^2\right)} \leq \frac{|w|^{p-j}}{\log^a \left(4e^{\frac{s}{p-1}}\right)} \leq \frac{(p-1)^a}{s^a}|w|^{p-j}.$$
Now, considering the second case $w^2e^{\frac{s}{p-1}} \leq 4$, we have $|w|^{p-j} \leq 2^{p-j}e^{-\frac{(p-j)s}{2(p-1)}}$ which yields 
$$\frac{|w|^{p-j}}{\log^a\left(2 + e^{\frac{2s}{p-1}}w^2\right)} \leq \frac{|w|^{p-j}}{\log^a(2)} \leq \frac{2^{p-j}}{\log^a2}e^{-\frac{(p-j)s}{2(p-1)}}.$$
Taking $C = \max\left\{C', \frac{2^{p}}{\log^a2}, (p-1)^a\right\}$ and $\hat{s} > 0$ such that $e^{-\frac{(p-j)s}{p-1}} \leq s^{-a}$ for all $s \geq \hat{s}$, we have the conclusion. This ends the proof of Lemma \ref{ap:lemmA2}.
\end{proof}

\noindent The following lemma shows us the existence of solutions of the associated ODE of equation \eqref{equ:divw1}:
\begin{lemm} \label{ap:lemmA3} Let $\phi$ be a positive solution of the following ordinary differential equation:
\begin{equation}\label{ap:phiODE}
\phi_s = -\frac{\phi}{p-1} + \phi^p + e^{-\frac{ps}{p-1}}h\left(e^\frac{s}{p-1} \phi\right).
\end{equation}
Then $\phi(s) \to \kappa$ as $s \to +\infty$ and $\phi(s)$ is given by
\begin{equation}\label{ap:solphi}
\phi(s) = \kappa(1 + \eta_a(s))^{-\frac{1}{p-1}}, \quad \text{where} \quad \eta_a(s) = \mathcal{O}\left(\frac{1}{s^a}\right).
\end{equation}
If $h(x) = \mu\frac{|x|^{p-1}x}{\log^a(2 + x^2)}$, we have 
$$\eta_a(s) \sim C_0\int_s^{+\infty}\frac{e^{s-\tau}}{\tau^a}d\tau = \frac{C_0}{s^a}\left(1 + \sum_{j\geq 1}\frac{b_j}{s^j}\right),$$
where $C_0 = \mu\left(\frac{p-1}{2}\right)^a$ and $b_j = (-1)^j\prod_{i = 0}^{j-1}(a+i)$.
\end{lemm}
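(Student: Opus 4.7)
My plan is to reduce the problem to a first-order scalar ODE with a simple linear part by the substitution $\phi(s) = \kappa(1+\eta(s))^{-1/(p-1)}$, which turns the condition $\phi \to \kappa$ into $\eta \to 0$. Using $\kappa^{p-1} = 1/(p-1)$, a routine computation transforms \eqref{ap:phiODE} into
\begin{equation*}
\eta'(s) = \eta(s) - g(s,\eta(s)), \qquad g(s,\eta) := \frac{(p-1)(1+\eta)^{p/(p-1)}}{\kappa}\, e^{-\frac{ps}{p-1}}\, h\!\left(e^{\frac{s}{p-1}}\kappa(1+\eta)^{-\frac{1}{p-1}}\right).
\end{equation*}
Lemma \ref{ap:lemmA2} then gives, for $\eta$ in a fixed neighborhood of $0$, the uniform bound $|g(s,\eta)| \leq C s^{-a}$ and a Lipschitz estimate $|\partial_\eta g(s,\eta)| \leq C s^{-a}$.

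Next, since the homogeneous equation $\eta' = \eta$ admits only the growing solutions $ce^{s}$, among solutions of the full ODE there is a distinguished ``bounded-at-infinity'' one; solving by the integrating factor $e^{-s}$ and integrating from $s$ to $+\infty$ (legitimate as soon as $\eta \to 0$) shows any such solution must satisfy
\begin{equation*}
\eta(s) = \int_s^{+\infty} e^{s-\tau}\, g(\tau,\eta(\tau))\, d\tau.
\end{equation*}
I would construct $\eta$ by Banach's fixed-point theorem in the complete metric space $X_{s_0,R} = \{\eta \in C([s_0,+\infty)) : \sup_{s\geq s_0} s^a|\eta(s)| \leq R\}$, endowed with the weighted sup norm $\|\eta\|_\ast = \sup_{s\geq s_0} s^a|\eta(s)|$. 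Using the elementary estimate $\int_s^{+\infty} e^{s-\tau}\tau^{-a}d\tau = s^{-a}(1+o(1))$ together with the bounds on $g$ from the previous step, the operator $T\eta(s) := \int_s^{+\infty}e^{s-\tau}g(\tau,\eta(\tau))d\tau$ maps $X_{s_0,R}$ into itself and is a contraction once $s_0$ is large enough; this produces $\eta$ with $\eta(s) = \mathcal{O}(s^{-a})$ and hence the first part of \eqref{ap:solphi}.

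For the explicit choice $h(x) = \mu|x|^{p-1}x/\log^a(2+x^2)$, I would plug in and use $\phi \to \kappa$ together with $\log\!\bigl(2+e^{2s/(p-1)}\phi^2\bigr) = \tfrac{2s}{p-1}(1+o(1))$ to obtain $g(s,\eta(s)) = C_0 s^{-a}(1+o(1))$ with $C_0 = \mu\bigl(\frac{p-1}{2}\bigr)^a$, so that the integral identity above gives $\eta(s) \sim C_0\int_s^{+\infty} e^{s-\tau}\tau^{-a}d\tau$. Iterated integration by parts on $\int_s^{+\infty}e^{s-\tau}\tau^{-\alpha}d\tau = s^{-\alpha} - \alpha\int_s^{+\infty}e^{s-\tau}\tau^{-\alpha-1}d\tau$ then yields the claimed asymptotic series, with each step producing the factor $-(a+j-1)$ and therefore the coefficients $b_j = (-1)^j\prod_{i=0}^{j-1}(a+i)$. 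The main technical point is verifying the contraction property of $T$: the Lipschitz constant inherits the factor $s^{-a}$ from $\partial_\eta g$, so in the weighted norm $\|T\eta_1 - T\eta_2\|_\ast \leq C s_0^{-a}\|\eta_1-\eta_2\|_\ast$, which is the key ingredient and is where the decay rate of the perturbation enters decisively.
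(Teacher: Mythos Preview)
Your argument is correct. Both you and the paper arrive at the same substitution $\phi=\kappa(1+\eta)^{-1/(p-1)}$ (equivalently, work with $1/\phi^{p-1}$), the same reduced equation $\eta'=\eta-g$, the same integral representation $\eta(s)=\int_s^{+\infty}e^{s-\tau}g(\tau)\,d\tau$, and the same integration-by-parts expansion for $\int_s^{+\infty}e^{s-\tau}\tau^{-a}\,d\tau$.

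The route differs in how the existence of the bounded solution is obtained. The paper first notes that the similarity change of variables $v(t)=(T-t)^{-1/(p-1)}\phi(s)$ turns the blow-up ODE \eqref{equ:vode11} into \eqref{ap:phiODE}, so Lemma~\ref{ap:lemmA1} gives $\phi(s)\to\kappa$ for free. With $\phi$ in hand, $g(s)=\phi^{-p}e^{-ps/(p-1)}h(e^{s/(p-1)}\phi)$ is a \emph{known} function of $s$, the equation $(1/\phi^{p-1})'=1/\phi^{p-1}-(p-1)(1+g(s))$ is genuinely linear, and one simply selects the bounded solution. You instead keep the $\eta$-dependence in $g(s,\eta)$ explicit and run a contraction in the weighted space $\{\sup_{s\geq s_0}s^a|\eta(s)|\leq R\}$, using the $j=1$ case of Lemma~\ref{ap:lemmA2} for the Lipschitz bound $|\partial_\eta g|\leq Cs^{-a}$. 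Your approach is more self-contained (it does not invoke Lemma~\ref{ap:lemmA1} or the link to the blow-up ODE) and yields existence and the decay rate in one stroke; the paper's approach is shorter because the linearization is exact once $\phi$ is fixed, so no fixed-point argument is needed.
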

\begin{proof}
By the following transformation 
$$ v(t) = (T-t)^{-\frac{1}{p-1}}\phi(s),\quad s = -\log(T-t),$$
equation \eqref{ap:ODEv}  is transformed into \eqref{ap:phiODE}. From Lemma \ref{ap:lemmA1}, we see that $\phi(s) \to \kappa$ as $s \to +\infty$.\\
By dividing equation \eqref{ap:phiODE} by $\phi^p$, we find that
\begin{equation}\label{equ:apODEdphi}
\left(\frac{1}{\phi^{p-1}} \right)' = \frac{1}{\phi^{p-1}} -(p-1)(1 + g(s)), \quad g(s) = \frac{1}{\phi^p}e^{-\frac{ps}{p-1}}h\left(e^\frac{s}{p-1}\phi\right).
\end{equation}
Since $\phi(s) \to \kappa$ as $s \to +\infty$, we have from Lemma \ref{ap:lemmA2} that $g(s) = \mathcal{O}\left(\frac{1}{s^a}\right)$ as $s \to +\infty$. Solving equation \eqref{equ:apODEdphi} yields 
$$\phi(s) = \kappa\left(1 + \eta_a(s)\right)^{-\frac{1}{p-1}}, \quad \text{where}\quad \eta_a(s) = \int_s^{+\infty}e^{s - \tau}g(\tau)d\tau.$$
By integration by part, we find that
\begin{equation}\label{equ:intbypar}
\int_s^{+\infty}e^{s - \tau}\tau^{-a}d\tau = \frac{1}{s^a}\left(1 + \sum_{j\geq 1}\frac{b_j}{s^j}\right),\quad b_j = (-1)^j\prod_{i = 0}^{j-1}(a+i).
\end{equation}
This follows that $\eta_a(s) = \mathcal{O}(s^{-a})$ as $s \to +\infty$. In particular case $h(x) = \mu\frac{|x|^{p-1}x}{\log^a(2 + x^2)}$, we have $g(s) = \mu\log^{-a}\left(2 + e^\frac{2s}{p-1} \phi^2(s)\right) \sim \mu\left(\frac{p-1}{2s}\right)^a$ as $s \to +\infty$. By \eqref{equ:intbypar}, we get the desired result and finish the proof of Lemma \ref{ap:lemmA3}.
\end{proof}

\section{Appendix B}
\subsection{Proof of Proposition \ref{prop:regpar}}\label{ap:proRepa}
\noindent We give the proof of Proposition \ref{prop:regpar} here. 
\begin{proof} The idea of the proof is given in Lady{\v{z}}enskaja and al. \cite{LSU68}. Note that we still get interior regularity even if we know nothing  about the initial or boundary data. Indeed, let $\tau\in (0,1)$ and fix $t_0$ such that $t_0 -\tau > 0$, we denote
$Q_{\tau}(t_0) = \mathbf{B}_{R/2}\times (t_0 - \tau, t_0) \subset Q_R$, and let $\varphi(x,t)$ be a smooth function defined in $Q_R$ such that $0 \leq \varphi(x,t) \leq 1$ and $\varphi(x,t) = 0$  for all $(x,t) \in Q_R \setminus Q_{\tau}(t_0)$. Let $k \geq 1$, define 
$$v_k(x,t) = \max\{v(x,t) - k,0\} \quad \text{and} \quad A_k(t) = \{x \in \mathbf{B}_R: v(x,t) > k\}.$$
Then, multiplying equation \eqref{equ:vinger} by $v_k\varphi^2$ and integrating over $Q_\tau(t_0)$, we find that
\begin{align*}
&\frac{1}{2}\int_{\mathbf{B}_R}v_k^2 \varphi^2dx \vert_{t_0 - \tau}^{t_0} + \int_{t_0 -\tau}^{t_0}\int_{\mathbf{B}_R}\left|\nabla v_k\right|^2\varphi^2dxdt\\
& = - \int_{t_0 - \tau}^{t_0}\int_{\mathbf{B}_R}v_k^2 \varphi \varphi_t dx dt + 2 \int_{t_0 -\tau}^{t_0}\int_{\mathbf{B}_R}\left(\nabla v_k \cdot \nabla \varphi \right) v_k\varphi dxdt\\
& \quad - \int_{t_0 -\tau}^{t_0}\int_{\mathbf{B}_R}\left(b\cdot \nabla v_k\right) v_k\varphi^2 dxdt + \int_{t_0 -\tau}^{t_0}\int_{A_k(t)}F v_k\varphi^2 dxdt.
\end{align*}
Using the assumption $|F| \leq g (|v| + 1)$ and some elementary inequalities with noticing that $\varphi(\cdot,t_0 - \tau) = 0$, we then obtain
 \begin{align}\max_{t_0 -\tau \leq t\leq t_0}\left\|v_k(t)\varphi(t)\right\|^2_{L^2(\mathbf{B}_R)}& + \int_{t_0 -\tau}^{t_0}\int_{\mathbf{B}_R}\left|\nabla v_k\right|^2\varphi^2dxdt \nonumber\\
& \leq 2\int_{t_0 -\tau}^{t_0}\int_{\mathbf{B}_R} \left(4|\nabla \varphi| + \varphi|\varphi_t| \right)v_k^2 dx dt \nonumber\\
& + 2\int_{t_0 -\tau}^{t_0}\int_{A_k(t)} (\mu_1^2 + 2g)\left(v_k^2 + k^2\right)\varphi^2 dx dt.\label{eq:tmVkreg} 
\end{align}
For the last term in the right-hand side (denote by $I$), we use Holder's inequality and \eqref{equ:conLemint}, which reads
\begin{align*}
|I| &\leq \left(\int_{t_0 -\tau}^{t_0} \|2\mu_1^2 + 4g\|_{L^{\alpha'}(A_k(t))}^{\beta'} dt \right)^\frac{1}{\beta'} \left(\int_{t_0 -\tau}^{t_0} \left\|\left(v_k^2 + k^2\right)\varphi^2\right\|_{L^{\alpha_1}(A_k(t))}^{\beta_1} dt \right)^\frac{1}{\beta_1},\\
& \leq \gamma \left(\int_{t_0 -\tau}^{t_0} \left\|\left(v_k^2 + k^2\right)\varphi^2\right\|_{L^{\alpha_1}(A_k(t))}^{\beta_1} dt\right)^\frac{1}{\beta_1} = \gamma II,
\end{align*}
where $\gamma = \gamma(\mu_1, \mu_2, R, \alpha', \beta') > 0$, $\alpha_1 = \frac{\alpha'}{\alpha' - 1}$ and $\beta_1 = \frac{\beta'}{\beta' - 1}$.\\
From pages 184 and 185 in \cite{LSU68}, we have the following interpolation identity:
$$II \leq\beta \theta_k^{\frac{2\epsilon}{r}}\left(\max_{t_0 -\tau \leq t\leq t_0}\left\|v_k(t)\varphi(t)\right\|^2_{L^2(A_k(t))} + \int_{t_0 -\tau}^{t_0}\int_{A_k(t)}\left|\nabla v_k\right|^2 \varphi^2dxdt \right) +  k^2 \sigma_k^{\frac{2(1 + \epsilon)}{r}},$$
where $\epsilon \in (0,1)$, $r \geq 2$, $\beta > 0$ are constants,
$$\theta_k = \int_{t_0 -\tau}^{t_0}\left|A_k(t)\right|^{\beta_1/\alpha_1}dt, \quad \sigma_k = \int_{t_0 -\tau}^{t_0} \|\varphi(t)\|^{\beta_1}_{L^{\alpha_1}(A_k(t))} dt.$$
Since $\theta_k \leq \tau R^{\beta_1/\alpha_1}$, we can take $\tau$ small enough such that 
$$\gamma\beta (\tau R^{\beta_1/\alpha_1})^{\frac{2\epsilon}{r}} \leq \frac{1}{2}.$$
Then from \eqref{eq:tmVkreg}, we have 
\begin{align}
\max_{t_0 -\tau \leq t\leq t_0}\left\|v_k(t)\varphi(t)\right\|^2_{L^2(\mathbf{B}_R)}& + \int_{t_0 -\tau}^{t_0}\int_{\mathbf{B}_{R}}\left|\nabla v_k\right|^2 \varphi^2 dxdt \nonumber\\
& \leq  \gamma'\left[\int_{t_0 - \tau}^{t_0} \int_{\mathbf{B}_R} \left(|\nabla \varphi| + \varphi|\varphi_t|\right) v_k^2 dxdt + k^2 \sigma_k^{\frac{2(1 + \epsilon)}{r}}\right]. \label{equ:tempIdreg}
\end{align}
By Remark 6.4, page 109 and Theorem 6.2, page 103 in \cite{LSU68}, we know that if $v$ satisfies \eqref{equ:tempIdreg} for any $k \geq 1$, then for all $(x,t) \in \mathbf{B}_{R/4}\times(t_0 - \tau/2,t_0)$,
\begin{align}
 |v(x,t)| \leq \gamma''\left[\left(\frac{R}{2}\right)^{-\frac{n+2}{2}}\left(1 + \frac{R}{2\sqrt{\tau}}\right)\left(\int_{t_0-\tau}^{t_0}\|v(t)\|^2_{L^2 (\mathbf{B}_R)}dt \right)^{1/2} \right. &\nonumber\\ 
  + \left. \left(1 + \frac{4\tau}{R}\right)^{\frac{1 + \epsilon}{r}} \right] < +\infty&.\label{eq:rereg}
\end{align}
Analogous arguments with the function $-v$ would yield the same estimate. Since $\mu_1$, $\mu_2$ and $\mu_3$ are uniformly bounded in $t_0$, this implies that estimate \eqref{eq:rereg} holds for all $(x,t) \in \mathbf{B}_{R/4}\times(\tau/2, +\infty)$. This concludes the proof of Proposition \ref{prop:regpar}.
\end{proof}
\subsection{Proof of Proposition \ref{prop:upElc}}\label{ap:upELc}
We prove Proposition \ref{prop:upElc} here. Let us first derive the upper bound for $\mathcal{E}_\psi$.
\begin{proof}[\textbf{Proof of the upper bound for $\mathcal{E}_\psi$}] Multiplying equation \eqref{equ:divw1} with $\psi^2 w_s$ and integrating on $\mathbb{R}^n$ yield 
\begin{align*}
\int_{\mathbb{R}^n}\psi^2 w_s^2 \rho dy & = -\frac{1}{2}\frac{d}{ds}\int_{\mathbb{R}^n}\psi^2 |\nabla w|^2\rho dy - 2\int_{\mathbb{R}^n}\psi w_s \nabla \psi. \nabla w \rho dy\\
&\quad  - \frac{1}{2(p-1)}\frac{d}{ds}\int_{\mathbb{R}^n}\psi^2|w|^2\rho dy + \frac{1}{p+1}\frac{d}{ds}\int_{\mathbb{R}^n}\psi^2|w|^{p+1}\rho dy\\
& + e^{-\frac{p+1}{p-1}} \frac{d}{ds}\int_{\mathbb{R}^n}\psi^2 H\left(e^{\frac{s}{p-1}}w \right) \rho dy\\
& - \frac{1}{p-1}e^{-\frac{p}{p-1}s}\int_{\mathbb{R}^n}\psi^2h\left(e^{\frac{s}{p-1}}w \right)w \rho dy.
\end{align*}
We derive the following identity from the definition \eqref{equ:localE} of the local functional $\mathcal{E}_\psi$, 
\begin{align}
\frac{d}{ds}\mathcal{E}_\psi[w](s) &= -\int_{\mathbb{R}^n} \psi^2|w_s|^2\rho dy  - 2\int_{\mathbb{R}^n}\psi w_s \nabla \psi. \nabla w \rho dy \nonumber\\
&\quad + \frac{p+1}{p-1}e^{-\frac{p + 1}{p-1}s}\int_{\mathbb{R}^n} \psi^2 H\left(e^{\frac{1}{p-1}s}w\right)\rho dy \nonumber\\
& - \frac{1}{p-1} e^{-\frac{p}{p-1}s}\int_{\mathbb{R}^n}\psi^2 h\left(e^{\frac{1}{p-1}s}w\right)w\rho dy. \label{equ:Id1lc}
\end{align}
Using the fact that $2ab \leq \frac{a^2}{2} + 2b^2$, we obtain 
$$2\psi w_s \nabla \psi. \nabla w  \leq \frac{1}{2}\psi^2 w_s^2 + 2 |\nabla \psi|^2 |\nabla w|^2.$$
Combining with \eqref{equ:estimateHhe}, we get an estimate for \eqref{equ:Id1lc} as follows:
\begin{align*}
\frac{d}{ds}\mathcal{E}_\psi[w](s) &\leq -\frac{1}{2}\int_{\mathbb{R}^n} \psi^2|w_s|^2\rho dy + 2\|\nabla \psi\|^2_{L^\infty} \int_{\mathbb{R}^n}|\nabla w|^2 \rho dy \\
& \quad + Cs^{-a}\int_{\mathbb{R}^n} |w|^{p+1}\rho dy + Cs^{-a}, 
\end{align*}
where $C = C(a, p, n, M, \|\psi\|^2_{L^\infty})$.\\
Using $(iii)$ and $(iv)$ of Proposition \ref{prop:boundEpsi}, we see that
\begin{equation}\label{equ:con1Elc}
\frac{d}{ds}\mathcal{E}_\psi[w](s) \leq C_1 \left(1 + \|w_s\|_{L^2_\rho(\mathbb{R}^n)} \right), \quad \forall s \geq \tilde{s}_3,
\end{equation}
where $C_1 = C_1\left(a, p, n, N, J_3, J_4, \|\psi\|^2_{L^\infty}, \|\nabla \psi\|^2_{L^\infty} \right)$ and $J_i$ is introduced in Proposition \ref{prop:boundEpsi}.\\
From the definition of $\mathcal{E}_\psi$ given in \eqref{equ:localE}, we have 
\begin{align*}
\mathcal{E}_\psi[w](s) & \leq \|\psi\|^2_{L^\infty} \left\{\frac{1}{2} \int_{\mathbb{R}^n}\left(|\nabla w|^2 + \frac{1}{p-1}|w|^2 \right)\rho dy
- e^{-\frac{p+1}{p-1}} \int_{\mathbb{R}^n}H\left(e^{\frac{s}{p-1}}w \right) \rho dy. \right\} \\
& = \|\psi\|^2_{L^\infty} \left\{\mathcal{E}[w](s) +  \frac{1}{p+1}\int_{\mathbb{R}^n} |w|^{p+1}\rho dy \right\}\\
& \leq \|\psi\|^2_{L^\infty} \left\{J_0 +  \frac{1}{p+1}\int_{\mathbb{R}^n} |w|^{p+1}\rho dy \right\}. \quad \forall s \geq \tilde{s}_3.
\end{align*}
Integrating on $[s, s+1]$, we obtain 
\begin{align*}
\int_s^{s+1} \mathcal{E}_\psi[w](\tau) d\tau & \leq \|\psi\|^2_{L^\infty} \left\{J_0 +  \frac{1}{p+1}\int_s^{s+1} \int_{\mathbb{R}^n} |w|^{p+1}\rho dy d\tau\right\}\\
& \leq \|\psi\|^2_{L^\infty} \left\{J_0 +  \frac{1}{p+1}\left[\int_s^{s+1} \left(\int_{\mathbb{R}^n} |w|^{p+1}\rho dy \right)^2d\tau \right]^{\frac{1}{2}}\right\}\\
& \leq C_2\left(\|\psi\|^2_{L^\infty}, J_0, J_5\right) \quad \text{(use $(v)$ of Proposition \ref{prop:boundEpsi})}.
\end{align*}
Hence,
\begin{equation}\label{equ:con2Elc}
\int_s^{s+1} \mathcal{E}_\psi[w](\tau) d\tau \leq  C_2, \quad \forall s \geq \tilde{s}_3.
\end{equation}
Thus, there exists $\tau(s) \in [s, s+1]$ such that 
$$\mathcal{E}_\psi[w](\tau(s)) = \int_s^{s+1} \mathcal{E}_\psi[w](\tau')d\tau' \leq C_2.$$ 
We then have 
\begin{align*}
\mathcal{E}_\psi[w](s) & = \mathcal{E}_\psi[w](\tau(s)) + \int_{\tau(s)}^{s} \frac{d}{ds}\mathcal{E}_\psi[w](\tau')d\tau'\\
& \leq C_2 + \int_s^{s+1} C_1 \left(1 + \|w_s\|_{L^2_\rho(\mathbb{R}^n)} \right)d\tau' \leq C_2'. \;\; \text{(use $(i)$ of Proposition \ref{prop:boundEpsi})}
\end{align*}
This concludes the proof of the upper bound for $\mathcal{E}_\psi$.\\

\noindent It remains to prove the lower bound in order to conclude the proof of Proposition \eqref{prop:boundEpsi}.
\end{proof}
\begin{proof}[\textbf{Proof of the lower bound for $\mathcal{E}_\psi$}] Multiplying equation \eqref{equ:divw1} with $\psi^2 w$ and integrating on $\mathbb{R}^n$ yield 
\begin{align}
\frac{1}{2}\frac{d}{ds}\int_{\mathbb{R}^n} (\psi w)^2\rho dy & = -2\mathcal{E}_\psi[w](s) + \frac{p+1}{p-1}\int_{\mathbb{R}^n} \psi^2 |w|^{p+1}\rho dy \nonumber\\
& \quad - 2 \int_{\mathbb{R}^n} \psi w \nabla \psi . \nabla w \rho dy \nonumber\\
& \quad - 2 e^{-\frac{p+1}{p-1}s}\int_{\mathbb{R}^n} \psi^2 H\left(e^\frac{s}{p-1}w\right)\rho dy \nonumber\\
& \quad + e^{-\frac{p}{p-1}s}\int_{\mathbb{R}^n} \psi^2 h\left(e^\frac{s}{p-1}w\right)w\rho dy. \label{equ:Id2lcW}
\end{align}
We now control the new term $J_\psi[w](s) = 2\int_{\mathbb{R}^n} \psi w \nabla \psi . \nabla w \rho dy$ as follows: 
\begin{align}
J_\psi[w](s) & = -2 \int_{\mathbb{R}^n} w \nabla. \left(\psi w \nabla \psi\rho \right) dy \nonumber\\
& = - 2 \int_{\mathbb{R}^n} |w|^2 |\nabla \psi|^2 \rho dy - 2\int_{\mathbb{R}^n} \psi w \nabla \psi . \nabla w \rho dy \nonumber\\
& \quad - 2 \int_{\mathbb{R}^n} \psi |w|^2 \Delta \psi \rho dy + \int_{\mathbb{R}^n} \psi |w|^2y.\nabla \psi \rho dy.\nonumber\\ 
& = - \int_{\mathbb{R}^n} |w|^2 |\nabla \psi|^2 \rho dy -  \int_{\mathbb{R}^n} \psi |w|^2 \Delta \psi \rho dy + \frac{1}{2}\int_{\mathbb{R}^n} \psi |w|^2y.\nabla \psi \rho dy.\nonumber\\ 
& \leq \left[\|\psi\|_{L^\infty} \left(\|\Delta \psi\|_{L^\infty}  + \frac{1}{2}\|y.\nabla \psi\|_{L^\infty}\right) \right] \int_{\mathbb{R}^n} |w|^2\rho dy\nonumber\\
&\leq J_2 C_1(\psi), \quad \forall s \geq \tilde{s}_3  \quad \text{(use $(ii)$ of Proposition \ref{prop:boundEpsi})}. \label{equ:estJws}
\end{align}
Using \eqref{equ:estimateHhe} and \eqref{equ:Id2lcW},  we obtain 
\begin{align*}
\frac{1}{2}\frac{d}{ds}\int_{\mathbb{R}^n} (\psi w)^2\rho dy &\geq - 2 \mathcal{E}_\psi - J_2C_1(\psi) - C_2 s^{-a}\\
&\qquad \qquad  + \left(\frac{p+1}{p-1} - C_2s^{-a}\right)\int_{\mathbb{R}^n} \psi^2 |w|^{p+1}\rho dy.
\end{align*}
Taking $S$ large enough such that $C_2s^{-a} \leq \frac{p+1}{2(p-1)}$ for all $s \geq S$, we obtain for all $s \geq \max\{S, \tilde{s}_3\}$,
\begin{equation*}
\frac{1}{2}\frac{d}{ds}\int_{\mathbb{R}^n} (\psi w)^2\rho dy \geq -(2\mathcal{E}_\psi + C_3) + \frac{p+1}{2(p-1)}\int_{\mathbb{R}^n} \psi^2 |w|^{p+1}\rho dy,
\end{equation*}
where $C_3 = J_2C_1 + \frac{p+1}{2(p-1)}$.\\

\noindent Let $g(s) = 2 \mathcal{E}_\psi + C_3$ and $f(s) = \frac{1}{2}\int_{\mathbb{R}^n} \psi^2 |w|^{2}\rho dy$. Using Jensen's inequality, we have
\begin{align*}
f(s)^\frac{p+1}{2} &= 2^{-\frac{p+1}{2}}\left(\int_{\mathbb{R}^n} \psi^2 |w|^{2}\rho dy\right)^{\frac{p+1}{2}}\\
&\leq 2^{-\frac{p+1}{2}}\int_{\mathbb{R}^n} (\psi |w|)^{p+1}\rho dy \leq 2^{-\frac{p+1}{2}} \|\psi\|_{L^\infty}^{p-1}\int_{\mathbb{R}^n} \psi^2 |w|^{p+1}\rho dy.
\end{align*}
We therefore obtain for all $s \geq S_1 = \max\{S,\tilde{s}_3\} $,
\begin{equation}\label{equ:fff}
f'(s) \geq -g(s) + C_4f(s)^{\frac{p+1}{2}}.
\end{equation}
From \eqref{equ:con1Elc}, we also have
\begin{equation}\label{equ:ggg}
g'(s) \leq C_5 + h(s), \quad \forall s \geq \tilde{s}_3,
\end{equation}
where $h(s) = C_5\|w_s\|_{L^2_\rho(\mathbb{R}^n)}$ and $m = \int_{\tilde{s}_3}^{+\infty} h(s)ds \leq C_6$ by using $(i)$ of Proposition \ref{prop:boundEpsi}, where $C_5, C_6$ are some positive constants.\\
We claim that the function of $g$ is bounded from below by some constant $M$. Arguing by contradiction, we suppose that there exists a time $s^* \geq S_1$ such that $g(s^*) \leq - M$. Then for all $s \geq s^*$, we write 
\begin{align*}
g(s) & = g(s^*) + \int_{s^*}^s g'(\tau)d\tau \leq -M + \int_{s^*}^s (C_5+ h(\tau))d\tau\\
& \leq -M + m + C_5(s - s^*).
\end{align*}
Thus, we have by \eqref{equ:fff},
$$f' \geq M - m - C_5(s - s^*) + C_4f^{\frac{p+1}{2}}, \quad f(s^*) \geq 0.$$
On the other hand, we know that the solution of the following equation 
$$f' \geq 1 + C_5f^{\frac{p+1}{2}}, \quad f(s^*) \geq 0$$
blows up in finite time before 
$$s = s^* + \int_{0}^{+\infty}\frac{d\xi}{1 + C_4f^\frac{p+1}{2}} = s^* + T^*.$$
On the interval $[s^*, s^* + T^*]$, we have 
$$M -m - C_5(s - s^*) \geq M - m - C_5T^*.$$
Thus, we fix $M = m + C_5T^* + 1$ to get $M -m - C_5(s-s^*) \geq 1$ for all $s \in [s^*, s^* + T^*]$. \\
Therefore, $f$ blows up in some finite time before $s^* + T^*$. But this contradicts with the existence global of $w$. This follows \eqref{equ:upElc} and we complete the proof of Proposition \ref{prop:upElc}.
\end{proof}

\section{Appendix C}
\noindent We claim the following:
\begin{lemm}[\textbf{Estimate on $\bar{F}$}]\label{ap:lemmbounF_H} For $s$ large enough, we have
$$\left|\bar{F}(V,s) - \frac{p}{2\kappa}V^2\right| \leq C|V|^3 + \frac{C|V|^2}{s^{a - 1}},$$
where $C = C(a,p,M,\mu) > 0$.
\end{lemm}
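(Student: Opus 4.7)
The plan is to Taylor expand $\bar F(V,s) = \beta(s)\bigl(F(v) + H(v,s)\bigr)$ with $v = V/\beta(s)$, treating the purely algebraic nonlinearity $F(v)$ and the perturbation $H(v,s)$ separately, and then handling the prefactor $\beta(s) = 1 + \mathcal{O}(s^{-(a-1)})$ at the end. The whole point is to show that the quadratic terms line up to give exactly $\frac{p}{2\kappa}V^2$, with the two non-trivial sources of error, namely $O(|V|^3)$ from Taylor remainders and $O(V^2/s^{a-1})$ from the discrepancies $\phi(s) - \kappa$ and $\beta(s) - 1$.

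For $F(v) = |v+\phi|^{p-1}(v+\phi) - \phi^p - p\phi^{p-1}v$, the Taylor formula in the variable $v$ around $0$ gives
\begin{equation*}
F(v) = \tfrac{p(p-1)}{2}\phi^{p-2}v^2 + r(v), \qquad |r(v)| \leq C|v|^3,
\end{equation*}
where the constant $C$ is uniform thanks to the global bound $\|w\|_{L^\infty} \leq C$ from Theorem~\ref{theo:blrate} together with $\phi(s) \to \kappa$. Since $\phi(s) = \kappa + \mathcal{O}(s^{-a})$ by \eqref{equ:solphi}, one has $\phi^{p-2} = \kappa^{p-2} + \mathcal{O}(s^{-a})$, so
\begin{equation*}
F(v) = \tfrac{p(p-1)}{2}\kappa^{p-2}v^2 + \mathcal{O}(|v|^3) + \mathcal{O}(v^2 s^{-a}).
\end{equation*}
The identity $\kappa^{p-1} = 1/(p-1)$ converts the coefficient to $\tfrac{p}{2\kappa}$, which is the target constant.

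For the perturbation, Taylor's formula applied to $h$ between $e^{s/(p-1)}\phi$ and $e^{s/(p-1)}(\phi+v)$ yields $H(v,s) = \tfrac{1}{2}e^{-\frac{(p-2)s}{p-1}} v^2 h''(\xi)$ for some $\xi$ between those two points. The key estimate is then to bound $e^{-\frac{(p-2)s}{p-1}}|h''(\xi)|$ using \eqref{equ:h}. Because $\phi(s) \geq \kappa/2$ and $|v|$ is small for large $s$, one has $|\xi| \geq (\kappa/4)e^{s/(p-1)}$; distinguishing $p \geq 2$ (using the upper bound $|\xi| \leq Ce^{s/(p-1)}$) from $p<2$ (using the monotonicity of $\xi \mapsto |\xi|^{p-2}$ together with the lower bound on $\xi$) gives in both cases $e^{-\frac{(p-2)s}{p-1}}|\xi|^{p-2} \leq C$. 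Furthermore $\log(2+\xi^2) \geq cs$, so $\log^{-a}(2+\xi^2) \leq Cs^{-a}$. Together,
\begin{equation*}
|H(v,s)| \leq C\,v^2 s^{-a}.
\end{equation*}

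Adding the two expansions and multiplying by $\beta(s) = 1 + \mathcal{O}(s^{-(a-1)})$ gives $\bar F(V,s) = \tfrac{p}{2\kappa}\beta v^2 + \mathcal{O}(|v|^3) + \mathcal{O}(v^2 s^{-(a-1)})$, and finally substituting $v = V/\beta$ (so $v^2 = V^2(1 + \mathcal{O}(s^{-(a-1)}))$ and $|v|^3 \leq C|V|^3$) absorbs the $s^{-a}$ term into the already-present $s^{-(a-1)}$ error and yields the announced inequality. The only place where some care is needed is the case $1<p<2$ in the bound on $|h''(\xi)|$, where the possibility of $h''$ being unbounded near $0$ has to be ruled out by exploiting the fact that the argument of $h''$ is uniformly large (proportional to $e^{s/(p-1)}$) rather than small; everything else is routine Taylor expansion.
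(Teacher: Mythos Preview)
Your overall strategy is the same as the paper's, but there is a genuine gap in both the treatment of $F$ and of $H$: you assume that $|v|$ is small for large $s$, and use this to conclude that the intermediate point $\phi+\theta v$ (respectively $\xi=e^{s/(p-1)}(\phi+\theta v)$) stays uniformly bounded away from $0$. This is not justified. What is available is the $L^\infty$ bound $\|w\|_{L^\infty}\le C$ from Theorem~\ref{theo:blrate}, together with $\|v(\cdot,s)\|_{L^2_\rho}\to 0$ and $w\to\kappa$ uniformly on compact sets; none of these gives $\|v(\cdot,s)\|_{L^\infty}\to 0$. Since $v+\phi=w$ and $w(y,s)$ may well vanish (or be negative) for some $y$, the intermediate point $\phi+\theta v$, $\theta\in[0,1]$, can approach $0$, and then for $p<3$ the third derivative $p(p-1)(p-2)|\phi+\theta v|^{p-3}$ in the Lagrange remainder of $F$ is unbounded, and for $p\le 2$ the factor $|h''(\xi)|$ is not controlled by your argument either.

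The paper closes this gap by a dichotomy on the size of $|\theta v|$. When $|\theta v|\le\kappa/2$ one indeed has $|\phi+\theta v|\ge\kappa/4$ for $s$ large, and your argument goes through verbatim. When $|\theta v|>\kappa/2$, instead of bounding the remainder coefficient $\gamma_1$ (resp.\ $\gamma_2$) directly, one bounds the whole remainder $\gamma_1 v^3=F(v)-\tfrac{p(p-1)}{2}\phi^{p-2}v^2$ (resp.\ $\gamma_2 v^2=H(v,s)$) using only $|w|\le C$ and $\phi\le\tfrac52|\theta v|\le\tfrac52|v|$; this gives $|\gamma_1 v^3|\le C(|v|^p+v^2)$ and $|\gamma_2 v^2|\le Cs^{-a}(|v|^p+1)$, and then one divides by $|v|^3$ (resp.\ $|v|^2$) using the \emph{lower} bound $|v|\ge\kappa/2$ to conclude. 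You should incorporate this case split; without it the cubic remainder bound $|r(v)|\le C|v|^3$ is simply false for $1<p<3$ at points where $w(y,s)$ is near zero.
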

\begin{proof} Consider the Taylor expansion of the nonlinear terms $F$ and $H$, we have 
\begin{align*}
F(v) &= \frac 12 p(p-1)\phi^{p-2} v^2 + \gamma_1 v^3, \quad H(v,s) = \gamma_2v^2,
\end{align*}
where 
$$\gamma_1 = \frac{1}{6}p(p-1)(p-2)|\phi + \theta_1v|^{p-3}, \quad \gamma_2 = \frac 12 e^{-\frac{(p-2)s}{p-1}}h''\left(e^\frac{s}{p-1}(\phi + \theta_2v)\right),$$
with $\theta_i \in [0,1]$, $i = 1,2$.\\
We claim the following: for $s$ large,
\begin{equation}\label{equ:apTmEstphithe}
|\gamma_1| \leq C \quad \text{and} \quad |\gamma_2| \leq \frac{C}{s^a}.
\end{equation}
Let us leave the proof of \eqref{equ:apTmEstphithe} later and continue the proof of Lemma \ref{ap:lemmbounF_H}. Recalling from Lemma \ref{ap:lemmA3} that $\phi(s) = \kappa + \mathcal{O}(s^{-a})$ as $s \to +\infty$, we derive 
$$\left|F(v) + H(v,s) - \frac{p}{2\kappa}v^2\right| = \mathcal{O}\left(\frac{|v|^2}{s^a}\right) + \mathcal{O}(|v|^3), \quad \text{as} \; s \to +\infty.$$
From the definition of $\bar{F}$ and the fact that $\beta(s) = 1 + \mathcal{O}(\frac{1}{s^{a-1}})$ as $s \to +\infty$, we have for $s$ large enough,
\begin{align*}
\left|\bar{F}(V,s) - \frac{p}{2\kappa}V^2\right| &= \left|\beta(s)\left( F(v) + H(v,s)\right) - \frac{p}{2\kappa}v^2 \beta^2 \right|\\
&\leq \left|F(v) + H(v,s) - \frac{p}{2\kappa}v^2\right| + \frac{C|v|^2}{s^{a-1}}\\
&\leq \frac{C|v|^2}{s^a} + C|v|^3 + \frac{C|v|^2}{s^{a-1}} \leq C|V|^3 + \frac{C|V|^2}{s^{a-1}},
\end{align*}
which concludes the proof of Lemma \ref{ap:lemmbounF_H}, assuming that \eqref{equ:apTmEstphithe} holds.\\

\noindent Let us now give the proof of \eqref{equ:apTmEstphithe}. Since $\phi(s) \to \kappa$ as $s \to +\infty$, we can take $s_* > 0$ such that 
$$\frac{3\kappa}{4} \leq \phi(s) \leq \frac{5\kappa}{4}, \quad \forall s \geq s_*.$$
Let us bound $|\gamma_1|$. If $p \geq 3$, by the boundedness of $|\phi|$ and $|v|$, then $|\gamma_1|$ is already bounded. If $p \in (1,3)$, we consider the case $|\theta_1 v| \leq \frac{\kappa}{2}$, then the case $|\theta_1 v| > \frac{\kappa}{2}$.  In the first case, we have $|\phi + \theta_1v| \geq \frac{\kappa}{4}$ for all $s \geq s_*$, then $|\gamma_1| \leq C|\phi + \theta_1v|^{p-3} \leq C\left(\frac{\kappa}{4}\right)^{p-3}$ for all $s \geq s_*$ . Now, considering the second case where $|\theta_1v| > \frac{\kappa}{2}$, note that in this case, we have $\theta_1 \ne 0$ and $\phi < \frac{5}{2}|\theta_1 v|$ for all $s \geq s_*$. From the definition of $F(v)$, we have
\begin{align*}
|\gamma_1 v^3| &= \left||\phi + v|^{p-1}(\phi + v) - \phi^p - p\phi^{p-1}v - \frac 12 p(p-1)\phi^{p-2} v^2\right|\\
& \leq C(|v|^p + v^2), \quad \forall s \geq s_*.
\end{align*}
This yields $|\gamma_1| \leq C(|v|^{p-3} + |v|^{-1}) \leq C \left((\kappa/2\theta_1)^{p-3} + (\kappa/2\theta_1)^{-1}\right)$ for all $s \geq s_*$. This concludes the proof of the first estimate of \eqref{equ:apTmEstphithe}.\\

Let us now prove that $|\gamma_2| \leq Cs^{-a}$ for $s$ large enough. From \eqref{equ:h}, we have
\begin{equation}\label{equ:apgamma2}
|\gamma_2| \leq  M\frac{|\phi + \theta_2v|^{p-2}}{\log^a(2 + e^\frac{s}{p-1}(\phi + \theta_2v)^2)}.
\end{equation}
If $p > 2$, by the same technique given in the proof of Lemma \ref{ap:lemmA2}, we can show that \eqref{equ:apgamma2} implies
\begin{equation*}
|\gamma_2| \leq \frac{C}{s^a}(|\phi + \theta_2v|^{p-2} + 1) \leq \frac{2C}{s^a},\quad \forall s \geq s'(a,p).
\end{equation*}
If $p \in (1,2]$, we consider the first case $|\theta_2 v| \leq \frac{\kappa}{2}$, which implies $|\phi(s) + \theta_2v| \geq \frac{\kappa}{4}$ for all $s \geq s_*$. From \eqref{equ:apgamma2}, we derive
$$|\gamma_2| \leq \frac{C (\kappa/4)^{p-2}}{\log^a(2 + e^\frac{s}{p-1}(\kappa/4)^2)} \leq \frac{2C}{s^a},\quad  \text{for $s$ large}.$$
In  the case where $|\theta_2 v| > \frac{\kappa}{2}$, we note that $\theta_2 \ne 0$ and $ \phi(s)\leq \frac{5}{2}|\theta_2 v|$ for all $s \geq s_*$. Using the definition of $H(v,s)$ and \eqref{equ:h}, we find that
\begin{align*}
|\gamma_2 v^2|&\leq C\left( \frac{|\phi + v|^p}{\log^a \left(2 + e^\frac{2s}{p-1}(\phi + v)^2\right)} + \frac{\phi^p}{\log^a \left(2 + e^\frac{2s}{p-1}\phi^2\right)} \right.\\
& \qquad \qquad \left.+ \frac{\phi^{p-1}v}{\log^a \left(2 + e^\frac{2s}{p-1}\phi^2\right)} + e^{-\frac{ps}{p-1}} + e^{-s}\right)\\
&\leq \frac{C}{s^a}\left(|\phi + v|^p + \phi^p + \phi^{p-1}|v| + 1\right) \leq \frac{2C}{s^a}(|v|^p + 1),
\end{align*}
for $s$ large. This yields $|\gamma_2| \leq \frac{2C}{s^a}(|v|^{p-2} + |v|^{-2}) \leq 
\frac{2C}{s^a}\left((\frac{\kappa}{2\theta_2})^{p-2} + (\frac{\kappa}{2\theta_2})^{-2}\right) \leq \frac{3C}{s^a}$. This concludes the proof of \eqref{equ:apTmEstphithe} and the proof of Lemma \ref{ap:lemmbounF_H} also.
\end{proof}
\begin{lemm}\label{lemm:elementary}
Let $\alpha(s)$ be a solution of 
\begin{equation}\label{equ:baseode}
\alpha'(s) = \alpha^2(s) + \mathcal{O}\left(\frac{1}{s^q}\right), \quad q \in (2,3],
\end{equation}
which exists for all time. Then
\begin{align}
\text{either}&\quad \alpha(s) = - \dfrac{1}{s} + \mathcal{O}\left(\dfrac{1}{s^q} \right) \quad \text{or } \quad \alpha(s) = \mathcal{O}\left(\dfrac{1}{s^q} \right), \quad \text{if}\quad q \in(2, 3),\label{eq:ApCrelq23}\\
\text{either}&\quad \alpha(s) = - \dfrac{1}{s} + \mathcal{O}\left(\dfrac{\log{s}}{s^2} \right) \quad \text{or } \quad \alpha(s) = \mathcal{O}\left(\dfrac{1}{s^2} \right), \quad \text{if}\quad q = 3. \label{eq:ApCrelq3}
\end{align}
\end{lemm}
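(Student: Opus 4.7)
The plan is to rescale and reduce the ODE to an autonomous equation with a decaying forcing, analyze the two stable behaviors of the limiting equation, and then recover sharp asymptotics by a linearization argument in each case.

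\textbf{Step 1: $\alpha(s) \to 0$.} I would first argue that the global existence of $\alpha$ forces $\alpha(s) \to 0$ as $s \to +\infty$. Indeed, the Riccati term $\alpha^2$ dominates the perturbation once $|\alpha|$ is not too small: if $\alpha(s_0)\geq \varepsilon>0$ at some large $s_0$, then $\alpha'\geq \alpha^2/2$, which would yield blow-up in finite time, contradicting global existence. Hence $\limsup \alpha(s)\le 0$. On the other hand, as soon as $\alpha<0$, one has $\alpha'=\alpha^2+O(s^{-q})>0$ for $s$ large, so $\alpha$ is eventually monotone non-decreasing and bounded above by $0$, so it converges; the limit must be $0$.

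\textbf{Step 2: Dichotomy via the autonomous reduction.} Set $\phi(s)=s\alpha(s)$ and change time to $t=\log s$. A direct computation gives
\begin{equation*}
\dot{\phi}(t)=\phi(t)+\phi(t)^2+r(t),\qquad |r(t)|=O\bigl(e^{(2-q)t}\bigr),
\end{equation*}
which is a perturbation of the autonomous equation $\dot\phi=\phi(\phi+1)$ whose equilibria are $\phi=0$ (unstable) and $\phi=-1$ (attracting). Since $q>2$, the forcing $r(t)$ decays as $t\to+\infty$. Using the sign of $\dot\phi$ on each of the intervals $(-\infty,-1)$, $(-1,0)$, $(0,+\infty)$ and the fact that $\phi$ cannot blow up (by global existence), a standard monotonicity argument yields the dichotomy: either $\phi(s)\to -1$ or $\phi(s)\to 0$ as $s\to+\infty$.

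\textbf{Step 3: Sharp asymptotics in the attracting case.} If $\phi(s)\to -1$, write $\alpha(s)=-1/s+\varepsilon(s)$ with $\varepsilon(s)=o(1/s)$, plug into \eqref{equ:baseode} and collect terms to get
\begin{equation*}
\varepsilon'(s)+\frac{2}{s}\varepsilon(s)=\varepsilon(s)^2+g(s),\qquad |g(s)|=O(s^{-q}).
\end{equation*}
Multiplying by the integrating factor $s^2$ gives $(s^2\varepsilon)'=O(s^{2-q})+s^2\varepsilon^2$. Integrating on $[s_0,s]$ and using the a priori smallness of $\varepsilon^2$, one obtains $s^2\varepsilon(s)=O(s^{3-q})$ for $q\in(2,3)$ and $s^2\varepsilon(s)=O(\log s)$ for $q=3$, which gives the announced remainder in the first alternative.

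\textbf{Step 4: Small case.} If $\phi(s)\to 0$, then $\alpha(s)=o(1/s)$. Integrating the ODE from $s$ to $+\infty$ and using $\alpha(\infty)=0$,
\begin{equation*}
\alpha(s)=-\int_s^{+\infty}\alpha(\tau)^2\,d\tau-\int_s^{+\infty}g(\tau)\,d\tau.
\end{equation*}
The second integral is $O(s^{-(q-1)})$, and a bootstrap in the first one (starting from $\alpha=o(1/s)$, so $\alpha^2=o(1/s^2)$, hence $\int_s^\infty \alpha^2=o(1/s)$, and iterating) saturates at the level $O(s^{-(q-1)})$ for $q\in(2,3)$, respectively $O(1/s^2)$ for $q=3$.

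The main obstacle is Step 2, the dichotomy: one must exclude exotic behaviors such as oscillations or accumulation at a non-equilibrium value. The decisive point is to prove eventual monotonicity of $\phi$; once this is in hand, the limit is forced to be an equilibrium of the unperturbed equation. Near the unstable equilibrium $\phi=0$, special care is needed because the forcing $r(t)$ could in principle kick $\phi$ back and forth, so the argument must compare the sizes of the linear part $\phi$ and of $r$ along trajectories.
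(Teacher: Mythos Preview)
Your approach via the change of variables $\phi=s\alpha$, $t=\log s$ is genuinely different from the paper's. The paper never makes this substitution; it argues directly with a barrier: fixing $\sigma\in(0,\tfrac{q-2}{2})$, either $|\alpha(s)|\le s^{-(1+\sigma)}$ for all large $s$, or there is $s_1$ with $|\alpha(s_1)|>s_1^{-(1+\sigma)}$, and a short sign/derivative comparison shows the solution can never re-enter the region $|\alpha|\le s^{-(1+\sigma)}$, after which one divides the ODE by $\alpha^2$ and integrates $(1/\alpha)'=-1+O(s^{-(q-2-2\sigma)})$ to obtain $\alpha=-1/s+O(s^{-(1+\sigma')})$. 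This barrier argument resolves your ``main obstacle'' (the dichotomy of Step~2) in a few elementary lines, which is the chief advantage of the paper's route. Your phase-portrait picture is more conceptual, but to close Step~2 rigorously you would still need, for instance, a Lyapunov argument for $\dot\phi=\phi(\phi+1)$ together with the integrability of $r^2$ to force $\phi(\phi+1)\to0$, followed by a connectedness argument to exclude oscillation between the two equilibria.

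There is also a gap in Step~4 that you may not have noticed: starting only from $\alpha=o(1/s)$, the estimate $\int_s^\infty\alpha^2=o(1/s)$ just returns $\alpha=o(1/s)$, so the iteration as written is vacuous. One fix is to set $M(s)=\sup_{\tau\ge s}\tau|\alpha(\tau)|$ and use your integral representation to get $M(s)\le M(s)^2+Cs^{-(q-2)}$; since $M(s)\to0$ this forces $M(s)\le 2Cs^{-(q-2)}$, hence $\alpha=O(s^{-(q-1)})$. With this and Step~2 completed, your Steps~3--4 coincide with the paper's bootstrap. (Your final exponents $O(s^{1-q})$ agree with how the lemma is actually applied in the paper---see \eqref{eq:tmpLa12}; the exponent $q$ rather than $q-1$ in the second alternative of \eqref{eq:ApCrelq23} appears to be a typo.)
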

\begin{proof} Let us first show that  
\begin{equation}\label{equ:fir2Cas}
\text{either} \;\; \alpha(s) = \mathcal{O}\left(\frac{1}{s^{1 + \sigma}}\right)
\;\; \text{or} \;\; \alpha(s) = -\frac{1}{s} + \mathcal{O}\left(\frac{1}{s^{1 + \sigma'}}\right) \;\; \text{as} \; s \to + \infty,
\end{equation}
for some $\sigma \in \left(0,\frac{q-2}{2}\right)$ and $\sigma' = q - 2 - 2\sigma$.\\

\noindent Fix $s_0$ large enough et let $\sigma \in \left(0,\frac{q-2}{2}\right)$. If 
$|\alpha(s)| \leq \frac{1}{s^{1 + \sigma}}$, for all $s \geq s_0$, then we are done. If not, namely there exists a time $s_1 > s_0$ such that $|\alpha(s_1)| > \frac{1}{s^{1 + \sigma}}$, we have two possibilities: \\
\begin{equation}\label{eq:apdC1}
|\alpha(s)| > \frac{1}{s^{1 + \sigma}}, \quad \forall s \geq s_1,
\end{equation}
or there exists a time $s_2 > s_1$ such that 
\begin{equation}\label{eq:apdC2}
|\alpha(s_2)| = \frac{1}{s_2^{1+\sigma}}\quad \text{and} \quad |\alpha(s_2)| \leq \frac{1}{s^{1 + \sigma}},\quad \forall s \in (s_2, s_2 + \delta),\;\delta > 0.
\end{equation}
If \eqref{eq:apdC1} is the case, then we have by equation \eqref{equ:baseode},
$$\left(\frac{1}{\alpha}\right)' = 1 + \mathcal{O}\left(\frac{1}{s^{q - 2 - 2\sigma}} \right), \quad \forall s \geq s_1,$$
which yields \eqref{equ:fir2Cas} by integration.\\
If \eqref{eq:apdC2} is the case, we assume that $\alpha(s_2) > 0$, then $\alpha'(s_2) \leq -\frac{1 + \sigma}{s_2^{2 + \sigma}} < 0$. By equation \eqref{equ:baseode} and note that $2+ 2\sigma < q$, we have $\alpha'(s_2) > 0$ and a contradiction follows. If $\alpha(s_2) < 0$, then $\alpha'(s_2) \geq \frac{1 + \delta}{s_2^{2 + \delta}}$, by equation \eqref{equ:baseode}, we get 
$$\frac{1 + \delta}{s_2^{2 + \delta}} \leq \alpha'(s_2) \leq \frac{1}{s^{2 + 2\sigma}_2} + \frac{1}{s_2^q}.$$
Since $2 + \delta < 2 + 2\delta < q$, we have a contradiction and \eqref{equ:fir2Cas} follows.\\

\noindent We now use \eqref{equ:fir2Cas} in order to conclude Lemma \ref{lemm:elementary}. Let us give the proof in the case $q = 3$. Assume $\alpha(s) = \mathcal{O}\left(\frac{1}{s^{1 + \sigma}}\right)$ for some $\sigma \in (0, \frac{1}{2})$, then \eqref{equ:baseode} yields
$$\alpha'(s) = \mathcal{O}\left(\frac{1}{s^{2 + 2\sigma}}\right) + \mathcal{O}\left(\frac{1}{s^3}\right) = \mathcal{O}\left(\frac{1}{s^{2 + 2\sigma}}\right).$$
By integration, we get $\alpha(s) = \mathcal{O}\left(\frac{1}{s^{1 + 2\sigma}}\right)$. Using this estimate, we obtain $\alpha'(s) = \mathcal{O}\left(\frac{1}{s^3}\right)$ and the conclusion follows.\\
Let us consider 
$$\alpha(s) = -\frac{1}{s} + \beta(s), \quad \text{with} \quad \beta(s) = \mathcal{O}\left(\frac{1}{s^{1 + \sigma'}}\right), \quad \sigma' = 1 - 2\sigma.$$
Substituting this into \eqref{equ:baseode} yields
$$\beta'(s) = \frac{2\beta(s)}{s} + \beta^2(s) + \mathcal{O}\left(\frac{1}{s^3}\right).$$
Multiplying this equation by $s^2$, we find
$$\left[s^2 \beta(s)\right]' = s^2 \beta^2 + \mathcal{O}\left(\frac{1}{s}\right) = \mathcal{O}\left(\frac{1}{s^{2\sigma'}}\right) + \mathcal{O}\left(\frac{1}{s}\right).$$
If $\sigma' \geq \frac{1}{2}$, then $\left[s^2 \beta(s)\right]' =  \mathcal{O}\left(\frac{1}{s}\right)$ which follows $\beta(s) = \mathcal{O}\left(\frac{\log s}{s^2}\right)$. If $\sigma' < \frac{1}{2}$, then $\beta(s) = \mathcal{O}\left(\frac{1}{s^{1 + 2\sigma'}}\right)$. Using this estimate and repeating the process again, we would obtain  $\beta(s) = \mathcal{O}\left(\frac{\log s}{s^2}\right)$ and \eqref{eq:ApCrelq3} then follows. Since the argument is similar in the case $q \in (2,3)$, we escape here and concludes the proof of Lemma \ref{lemm:elementary}.
\end{proof}

\def\cprime{$'$}

\vspace*{0.5cm}
\begin{description}
\item \textbf{Address:} Universit\'e Paris 13, Sorbonne Paris Cit\'e, Institut Galil\'ee, LAGA, 99 avenue J.B. Cl\'ement, 93430 Villetaneuse, France.
\item \textbf{E-mail:} \text{vtnguyen@math.univ-paris13.fr}
\end{description}

\end{document}